\newcommand{\xc}{\color{black}} 
\DeclareMathOperator*{\argmin}{arg\,min}
\numberwithin{equation}{section}
\newtheorem{corollary}{Corollary}
\newtheorem{assumption}{Assumption}
\newtheorem{lemma}{Lemma}
\newtheorem{definition}{Definition}
\newtheorem{remark}{Remark}
\newtheorem{theorem}{Theorem}
\newcommand{\fexp}{\phi}  
\newcommand{\genpol}{v}  
\newcommand{\genfun}{f}  
\newcommand{\y}{y}  
\newcommand{\basisfuncexp}{\psi}  
\newcommand{\Rset}{\mathbb{R}}    
\newcommand{\Nset}{\mathbb{N}}    
\newcommand{\s}{d}  
\newcommand{\inddim}{q}
\newcommand{\indbasis}{j}
\newcommand{\indmeas}{i}
\newcommand{\M}{m}
\newcommand{\Pol}{\mathbb{P}}
\newcommand{\cdelta}{\xi}
\newcommand{\quadweights}{\alpha}
\newcommand{\param}{r}
\newcommand{\density}{\rho}
\newcommand{\gramian}{G}
\newcommand{\coef}{\beta}
\newcommand{\design}{D}
\newcommand{\rhs}{b}
\newcommand{\rhsfun}{\Phi}
\newcommand{\cI}{\mathbb{I}}
\newcommand{\parajac}{\theta}
\newcommand{\err}{g}
\newcommand{\matber}{Q}
\newcommand{\matsin}{X}
\newcommand{\matsum}{Z}
\newcommand{\Q}{I}
\newcommand{\tcheck}[1]{\textcolor{black}{#1}} 
\newcommand{\tc}[1]{\textcolor{black}{#1}} 
\newcommand{\E}{\mathbb{E}}
\begin{document}
\title{Stable high-order randomized cubature formulae in arbitrary dimension}
\author{Giovanni Migliorati\footnote{
Laboratoire Jacques-Louis Lions, 
Sorbonne Universit\'e,
Paris 75005, France. 
email: migliorati@ljll.math.upmc.fr} \footnote{corresponding author}
\and 
Fabio Nobile\footnote{MATHI-CSQI, \'Ecole Polytechnique F\'ed\'erale de Lausanne, Lausanne CH-1015, Switzerland. email: fabio.nobile@epfl.ch}
}
\date{\today}
\maketitle

\begin{abstract}
\noindent
We propose and analyse randomized cubature formulae for the numerical integration of functions with respect to a given probability measure $\mu$ defined on a domain $\Gamma \subseteq \mathbb{R}^d$, in any dimension $d$. Each cubature formula is exact on a given finite-dimensional subspace $V_n\subset L^2(\Gamma,\mu)$ of dimension $n$, and uses pointwise evaluations of the integrand function $\phi : \Gamma \to \mathbb{R}$ at $m>n$ independent random points. These points are drawn from a suitable auxiliary probability measure that depends on $V_n$. We show that, up to a logarithmic factor, a linear proportionality between $m$ and $n$ with dimension-independent constant ensures stability of the cubature formula with high probability. We also prove error estimates in probability and in expectation  
for any $n\geq 1$ and $m>n$, thus covering both preasymptotic and asymptotic regimes. 
Our analysis shows that the expected cubature error decays as $\sqrt{n/m}$ times the $L^2(\Gamma, \mu)$-best approximation error of $\phi$ in $V_n$. On the one hand, for fixed $n$ and $m\to \infty$ our cubature formula can be seen as a variance reduction technique for a Monte Carlo estimator, and can lead to enormous variance reduction for smooth integrand functions and subspaces $V_n$ with spectral approximation properties. On the other hand, when $n,m\to\infty$, our cubature becomes of high order with spectral convergence. 
As a further contribution, we analyse also another cubature whose expected error decays as $\sqrt{1/m}$ times the $L^2(\Gamma, \mu)$-best approximation error of $\phi$ in $V_n$, 
which is asymptotically optimal but with constants that can be larger in the preasymptotic regime. 
Finally we show that, under a more demanding (at least quadratic) proportionality betweeen $m$ and $n$, all the weights of the cubature are strictly positive with high probability. As an example of application, we discuss the case where the domain $\Gamma$ has the structure of Cartesian product, $\mu$ is a product measure on $\Gamma$ and $V_n$ contains algebraic multivariate polynomials. 
\end{abstract}

\noindent 
{\bf AMS classification numbers:}   
41A25, 
41A65, 
65D32. 

\noindent
{\bf Keywords:} 
approximation theory, 
multivariate integration,
cubature formula,
error analysis, 
convergence rate, 
randomized linear algebra.  


\section{Introduction}
\noindent
Let $\Gamma \subseteq \Rset^\s$ be a Borel set, $\mu$ be a Borel probability measure on $\Gamma$ absolutely continuous with respect to the Lebesgue measure $\lambda$, and denote with $\density:=d\mu/d\lambda :\Gamma\to \Rset$ its probability density.  
Given a function $\fexp:\Gamma \to \Rset$ in some smoothness class, we consider the problem of integrating $\fexp$ with respect to $\mu$ over $\Gamma$: 
\begin{equation}
I(\fexp):=\int_\Gamma \fexp(\y) d\mu(\y)=\int_\Gamma \fexp(\y) \density(\y) d\lambda(\y).  
\label{eq:integral_analytic}
\end{equation}

When the expression of $\fexp$ or the geometric shape of the domain $\Gamma$ are complicated, the exact calculation of $I(\fexp)$ might be too difficult, or not be possible at all, for example if the function $\fexp$ is not available in explicit form but can only be evaluated at any point $\y \in \Gamma$ at a certain (possibly high) cost, so that the number of evaluations should be limited as much as possible. Hence one resorts to the numerical approximation of the integral \eqref{eq:integral_analytic}, see \emph{e.g.}~\cite{DR84,Stroud1971}, that is known as the problem of numerical quadrature when $\s=1$ or numerical cubature when $\s\geq 2$, and that can become a challenging task as $\s$ increases due to the curse of dimensionality. In any dimension $\s\geq 1$ and given an integer $\M\geq 1$, we consider the $\M$-point quadrature/cubature formula 
\begin{equation}
\label{eq:def_quadrature_general}
\Q_{\M}(\fexp):= \sum_{\indmeas = 1}^{\M} \quadweights_\indmeas \fexp(\y_\indmeas), 
\end{equation}
where $\y_1,\ldots,\y_\M \in \Gamma $ are the nodes and $\quadweights_1,\ldots,\quadweights_\M \in \Rset$ are the weights.  
The nodes and weights should be chosen such that  
\begin{equation}
\Q_{\M}(\fexp) \approx I(\fexp).
\label{eq:approx_quad_int}
\end{equation} 

One approach to develop quadrature/cubature formulae imposes that \eqref{eq:def_quadrature_general} be exact on some given finite-dimensional linear function space $V_n$ over $\Gamma$, where $n:=\textrm{dim}(V_n)$.  
In principle one would like to have a formula that exactly integrates any function in $V_n$, \emph{i.e.}
$$
\Q_{\M}(\genpol) = I(\genpol), \quad \forall \, \genpol  \in V_n.  
$$
When $\s=1$ and $V_n$ is a polynomial space, the existence of such quadrature formulae has been first discussed in \cite{Christoffel} with general $\density$, extending earlier results in \cite{Gauss} with $\density \equiv 1$. 
An example of quadrature is the Gauss-Hermite quadrature formula, that exactly integrates any univariate algebraic polynomial up to degree $2\M-1$ using $\M$ points, where integration is intended with respect to the Gaussian probability measure on $\mathbb{R}$. 

In general, quadrature/cubature formulae of the form \eqref{eq:def_quadrature_general} might not be provably stable to perturbations in the evaluations of $\fexp$ at the nodes.   
Denoting with $\eta_\indmeas$ the perturbation of $\fexp(\y_\indmeas)$, for the formula \eqref{eq:def_quadrature_general} it holds 
\begin{equation}
\label{eq:suff_cond_stab2}
\left|
\sum_{\indmeas=1 }^\M \quadweights_\indmeas ( \fexp(\y_\indmeas) + \eta_\indmeas)  -
\sum_{\indmeas=1 }^\M \quadweights_\indmeas \fexp(\y_\indmeas) \right| \leq 
\max_{j=1,\ldots,\M } |\eta_j |
\sum_{\indmeas=1 }^\M \left| \quadweights_\indmeas \right|.  
\end{equation}
As long as $V_n$ contains the functions that are constant over $\Gamma$, exactness of \eqref{eq:def_quadrature_general} over $V_n$ implies
\begin{equation}
\sum_{\indmeas=1 }^\M \quadweights_\indmeas = 1. 
\label{eq:exact_weights_cond}
\end{equation}
On the one hand, in presence of negative weights the summation of the $ \left| \quadweights_\indmeas \right|$ in \eqref{eq:suff_cond_stab2} can become larger than one, thus serving as an amplifying factor for the perturbations.    
On the other hand, if the weights are positive,  \eqref{eq:suff_cond_stab2} and \eqref{eq:exact_weights_cond} give 
\begin{equation}
\label{eq:suff_cond_stab}
\left|
\sum_{\indmeas=1 }^\M \quadweights_\indmeas ( \fexp(\y_\indmeas) + \eta_\indmeas)  -
\sum_{\indmeas=1 }^\M \quadweights_\indmeas \fexp(\y_\indmeas) \right| \leq 
\max_{\indmeas=1,\ldots,\M } |\eta_\indmeas |,   
\end{equation}
thus ensuring the stability of the formula \eqref{eq:def_quadrature_general} to perturbations. 

In one dimension, the weights of Gaussian quadratures are strictly positive. In higher dimension, cubatures with positive weights are difficult to construct, above all in general domains or with general densities $\density$. A remarkable result on the existence of stable quadrature/cubature formulae is the next theorem from \cite{T1957}. 

\begin{theorem} 
\label{thm:tchakaloff}
Let $\Gamma \subset \mathbb{R}^\s$ be a compact set, and consider the integral \eqref{eq:integral_analytic} with $\density$ strictly positive over $\Gamma$. 
Given $n$ real functions $f_1,\ldots,f_n$ that are continuous on $\Gamma$, linearly independent, and such that at least one is nonzero everywhere in $\Gamma$, there exists $(\y_\indmeas)_{\indmeas=1}^{\M} \subset \Gamma$ and nonnegative reals $(\quadweights_\indmeas)_{\indmeas=1}^\M$ with $\M\leq n$ such that the formula \eqref{eq:def_quadrature_general} for \eqref{eq:integral_analytic} is exact on $\textrm{span}\{f_1,\ldots,f_n \}$. 
\end{theorem}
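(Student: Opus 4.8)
The plan is to recast the existence of the cubature as a statement about the conical representability of the moment vector of $\mu$, and then to invoke Carathéodory's theorem to bound the number of nodes. First I would introduce the \emph{moment map} $F:\Gamma\to\Rset^{n}$, $F(\y):=(f_1(\y),\ldots,f_n(\y))$, which is continuous because each $f_\indbasis$ is; since $\Gamma$ is compact, the image $K:=F(\Gamma)$ is a compact subset of $\Rset^{n}$, and hence its convex hull $\mathrm{conv}(K)$ is compact as well. The single vector encoding all the integrals that must be reproduced is $v:=\bigl(I(f_1),\ldots,I(f_n)\bigr)=\int_\Gamma F(\y)\,d\mu(\y)$, and exactness of \eqref{eq:def_quadrature_general} on $\mathrm{span}\{f_1,\ldots,f_n\}$ is \emph{equivalent} to the finite identity $\sum_{\indmeas=1}^{\M}\quadweights_\indmeas F(\y_\indmeas)=v$ with $\y_\indmeas\in\Gamma$ and $\quadweights_\indmeas\ge 0$. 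Thus the whole theorem reduces to representing $v$ as a nonnegative finite combination of at most $n$ vectors drawn from $K$.

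The key observation is that $v$ is the barycenter of the pushforward probability measure $F_\#\mu$, which is supported on $K$. I would prove $v\in\mathrm{conv}(K)$ by a standard separation argument: if $v\notin\mathrm{conv}(K)$, then since $\mathrm{conv}(K)$ is compact and convex, Hahn--Banach provides $\xi\in\Rset^{n}$ with $\langle\xi,v\rangle>\sup_{k\in K}\langle\xi,k\rangle$; yet $\langle\xi,v\rangle=\int_\Gamma\langle\xi,F(\y)\rangle\,d\mu(\y)\le\sup_{\y\in\Gamma}\langle\xi,F(\y)\rangle=\sup_{k\in K}\langle\xi,k\rangle$, a contradiction. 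In particular $v$ lies in the convex hull, hence \emph{a fortiori} in the conical hull $\mathrm{cone}(K)=\{\sum_\indmeas c_\indmeas k_\indmeas:\,c_\indmeas\ge 0,\ k_\indmeas\in K\}$, so a finite nonnegative representation of $v$ by points of $K$ already exists.

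It then remains to cut the number of active nodes down to $n$. Starting from any finite representation $v=\sum_{\indmeas}\quadweights_\indmeas F(\y_\indmeas)$ with strictly positive weights (discarding the vanishing ones), if more than $n$ points appear the vectors $F(\y_\indmeas)\in\Rset^{n}$ are linearly dependent, say $\sum_\indmeas c_\indmeas F(\y_\indmeas)=0$ with $(c_\indmeas)\ne 0$ and, after a sign change, some $c_\indmeas>0$. Replacing $\quadweights_\indmeas$ by $\quadweights_\indmeas-t^\ast c_\indmeas$ with $t^\ast:=\min_{c_\indmeas>0}\quadweights_\indmeas/c_\indmeas$ preserves the identity $\sum_\indmeas(\quadweights_\indmeas-t^\ast c_\indmeas)F(\y_\indmeas)=v$, keeps all weights nonnegative, and annihilates at least one of them; iterating yields a representation with linearly independent $F(\y_\indmeas)$, hence with $\M\le n$ nodes and nonnegative weights. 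This is exactly the conical form of Carathéodory's theorem.

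I expect the only genuinely delicate point to be the passage from the $n+1$ nodes delivered by the plain affine Carathéodory theorem applied to $\mathrm{conv}(K)$ down to the $n$ claimed: this improvement is gained precisely by working with the conical hull, which is legitimate here because the theorem asks only for exactness on the span (equivalently $\sum_\indmeas\quadweights_\indmeas F(\y_\indmeas)=v$) and imposes no normalisation $\sum_\indmeas\quadweights_\indmeas=1$. The structural hypotheses then serve as nondegeneracy conditions: linear independence of $f_1,\ldots,f_n$ ensures that $K$ spans enough directions in $\Rset^{n}$, while the assumption that some $f_{i_0}$ vanishes nowhere guarantees (after a sign normalisation on each connected component of $\Gamma$) that $\mathrm{span}\{f_1,\ldots,f_n\}$ contains a function bounded away from zero, so that the cone generated by $K$ is pointed and $v$ admits an honest finite nonnegative representation rather than merely an approximate one. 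I would therefore concentrate the technical effort on verifying this pointedness and on checking that the elimination step preserves membership in $\Gamma$ and nonnegativity of the weights throughout.
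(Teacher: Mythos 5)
Your proof is correct, but there is nothing in the paper to compare it against: the paper does not prove this theorem at all. It is quoted verbatim from Tchakaloff's 1957 paper \cite{T1957}, and the authors explicitly remark that the proof in \cite{T1957} is non-constructive, pointing to \cite{P1997,BT2006} for generalizations. Your argument is essentially the modern streamlined proof found in that later literature (Putinar, Bayer--Teichmann): encode exactness as the single vector identity $\sum_{\indmeas=1}^{\M}\quadweights_\indmeas F(\y_\indmeas)=v$ in moment space, show that the barycenter $v$ of the pushforward measure lies in $\mathrm{conv}(F(\Gamma))$ by strict separation (legitimate because $\mathrm{conv}(F(\Gamma))$ is compact, hence closed, in $\mathbb{R}^n$), and then run the conical Carath\'eodory elimination to reduce any finite nonnegative representation to one supported on linearly independent vectors, which forces $\M\leq n$. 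All three steps are sound as you wrote them: the separation inequality, the preservation of nonnegativity under the update $\quadweights_\indmeas \mapsto \quadweights_\indmeas - t^\ast c_\indmeas$, and the termination of the elimination.

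One correction to your own assessment of where the difficulty lies: the pointedness of $\mathrm{cone}(F(\Gamma))$, which you flag in the last paragraph as the place needing technical effort, is not needed anywhere. Since $v\in\mathrm{conv}(F(\Gamma))$ and the convex hull of a compact set in $\mathbb{R}^n$ consists of \emph{finite} convex combinations, you already hold an honest finite nonnegative representation in hand before the elimination starts; the elimination step is purely linear-algebraic and works for an arbitrary finite conical combination, pointed cone or not. For the same reason, the hypotheses of linear independence of $f_1,\ldots,f_n$, the non-vanishing of one $f_{i_0}$, and the strict positivity of $\density$ are never used in your argument --- they are artifacts of Tchakaloff's original formulation, not requirements of the Carath\'eodory route. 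So your proof is in fact slightly stronger than the statement as quoted.
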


The result in Theorem~\ref{thm:tchakaloff} gives an upper bound for the number of nodes, and can be further generalized to unbounded domains, see \emph{e.g.}~\cite{P1997,BT2006}. 
With classical spaces of algebraic polynomials over the hypercube $\Gamma=[0,1]^\s$, there are known lower bounds on the number of nodes required by any cubature formula for exactness, \emph{e.g.}~in \cite{Stroud1960} and in \cite{Moller1979} for polynomial spaces supported on isotropic tensor product or total degree index sets. In such specific settings, it is possible to construct exact cubature formulae whose number of nodes matches those lower bounds, so-called minimal cubature formulae, see \emph{e.g.}~\cite{Cools} for an overview. With more general spaces of algebraic polynomials, minimal cubature formulae are not known on a systematic basis. 
In low dimension $\s\leq 3$, heuristic numerical methods based on optimization as those proposed in \cite{RB} allow the direct construction of (almost) minimal cubature formulae. In high dimension it is difficult to find minimal cubature formulae, and it is difficult also to find cubature formulae whose number of nodes matches the upper bound in Theorem~\ref{thm:tchakaloff}, above all with general polynomial spaces and with general domains. It is worth noticing that the proof of Theorem~\ref{thm:tchakaloff} from \cite{T1957} is not constructive, and finding the nodes and weights  of such remarkable formulae remains an open problem.

We now briefly recall other approaches to multivariate integration from the literature, that use a cubature formula of the form \eqref{eq:def_quadrature_general} with suitable choices for the nodes and weights. A first approach, that requires a product domain and a separable density $\density$, 
has been developed starting from \cite{Smolyak1963}, and nowadays goes under the name of Smolyak cubature or sparse grid quadrature/cubature, see \emph{e.g.}~\cite{BG2004a} and references therein. In general, a sparse grid cubature formula is exact on a chosen polynomial space, see for example those presented in \cite{NR99}.   
In low dimension and using polynomials with moderately high (total) degree, another type of (symmetric) cubature formulae has been presented in \cite{MS1967,GenzKeister}, and these cubatures become unstable as the degree of the polynomials increases. 

Other approaches to multivariate integration are Monte Carlo and quasi-Monte Carlo methods, see \cite{RC,DKS2013a} and references therein. These approaches do not impose exactness of the cubature formula on a given space. Both Monte Carlo and quasi-Monte Carlo methods share the same weights, that are equal to $\M^{-1}$ with $\M$ being the number of nodes. The difference between the two methods is in the choice of the nodes. In the Monte Carlo method the nodes are realizations of independent and identically distributed copies of a random variable distributed as $\mu$. In the quasi-Monte Carlo method the nodes are judiciously chosen according to specific deterministic rules, and a tensor product domain again is required. The notorious half-order convergence rate of the Monte Carlo method is immune to the curse of dimensionality. The convergence rate of quasi-Monte Carlo depends on the structure and smoothness of the integrand function, and on the low-discrepancy properties of the point set containing the nodes. 

In the present article we develop cubature formulae of the form \eqref{eq:def_quadrature_general} that are exact on $V_n$, for general domains $\Gamma\subseteq \Rset^\s$ provided an $L^2(\Gamma,\mu)$ orthonormal basis is available, and whose weights and nodes can be explicitly calculated. To pursue such an objective, we replace the integrand function by its weighted least-squares approximation onto $V_n$. Our cubature formulae use randomized nodes, and their exactness on $V_n$ occurs with a quantified large probability. More precisely, see Theorem~\ref{thm:estimates_quadrature}, if $\M$ is linearly proportional to $n$, up to a logarithmic factor, then the formula is exact on $V_n$ with large probability.
This shows that exact cubature formulae can be constructed in the general setting of Theorem~\ref{thm:tchakaloff} with $\M$ of the order of $n$, albeit not necessarily with positive weights. 

Given a function $\fexp$ in some smoothness class, for the proposed cubatures we provide convergence estimates in probability and in expectation for the integration error   
\begin{equation}
\vert   I(\fexp) - \Q_{\M}(\fexp) \vert
\label{eq:integration_error}
\end{equation}
depending on $\M$, $n$, and on the best approximation error of $\fexp$ in $V_n$ in some norm. 
In particular, we show that the \emph{mean error} satisfies an estimate of the type 
\begin{equation}
\E( |I(\fexp)-I_\M(\fexp) | ) \lesssim \sqrt{ \dfrac{n}{\M}  } \inf_{ \genpol \in V_n } \| \fexp - \genpol \|_{L^2(\Gamma,\mu)} + \M^{-\param},
\label{eq:type_res_one}
\end{equation}
provided $\frac{ \M }{ \ln \M } \gtrsim (1+\param) n$, where $\param >0$ can be taken arbitrarily and all hidden constants do not depend on $\s$. Similar estimates are proved for the mean squared error and in probability, and use recent results from \cite{CM2016} on the analysis of the stability and accuracy of weighted least-squares approximation methods. 

The previous estimate shows that if $n$ is kept fixed and $\M \to \infty$, the cubature formula has a Monte Carlo type convergence rate, however with a substantially reduced variance with respect to a standard Monte Carlo cubature formula, proportional to the $L^2(\Gamma,\mu)$ projection error of $\fexp$ onto $V_n$. In this respect, our cubature formula can be seen as a very efficient variance reduction technique in a Monte Carlo context. 

Moreover, if both $n,\M\to \infty$, still under the condition $\frac{\M}{\ln \M} \gtrsim n$, then the mean cubature error is of the same order of the $L^2(\Gamma,\mu)$-best approximation error and features a fast decay if the integrand function is smooth and the sequence of finite-dimensional spaces $V_n$ has good approximation properties. In this respect our cubature formula is of high order. We remark once more that all results and hidden constants do not depend on the dimension $\s$. 

As a further contribution, we construct another cubature formula for $I(\fexp)$  consisting of $I_\M(\fexp)$ plus a Monte Carlo correction estimator.   
This cubature satisfies an error estimate like \eqref{eq:type_res_one} without the term $\sqrt{n}$, and hence is asymptotically optimal in $n$, but with different 
constants that can be larger in the preasymptotic regime. 
Such a result is stated in Theorem~\ref{thm:estimates_quadrature_asy}, and is made possible by using 
 $2\M$ nonidentically distributed nodes 
 rather than $\M$ identically distributed nodes. 

An often desirable property of the quadrature/cubature formula \eqref{eq:def_quadrature_general} concerns the positivity of all the weights, that ensures stability as shown in \eqref{eq:suff_cond_stab}. This property is not fulfilled by sparse grid cubature formulae, which may contain negative weights, unless tensor grids are used and the underlying quadrature has positive weights (\emph{e.g.}~Gaussian quadratures).  
In Theorem~\ref{thm:positive_weights} we show that cubature formulae exact on $V_n$ and with strictly positive weights can be constructed, again in the same general setting of Theorem~\ref{thm:tchakaloff}, but with $\M$ being at least quadratically proportional to $n$, up to a logarithmic factor. At present time we are not aware of the existence in the literature of stable cubature formulae in general domains which have simultaneously high-degree of exactness and strictly positive weights, and the present paper provides a first analysis of cubature formulae of this type. 

Cubature formulae exact on a given space and  with nodes being random variables have been earlier studied in \cite{EZ1960,E1975}, but the weights are not necessarily positive. 
Error estimates for these stochastic cubature formulae can be found in 
\cite[Lemma 1]{N88}, which can be seen as a stochastic counterpart of Theorem~\ref{thm:tchakaloff}.  
The stochastic cubatures from \cite{EZ1960,E1975} use $n$ cubature nodes whose probability distribution contains determinants of $n$-by-$n$ matrices. 
For this reason, the generation of the nodes of such cubatures becomes computationally intractable already for moderate values of $n$ and $d$.
Compared to 
\cite{EZ1960,E1975}, 
the stochastic cubature 
developed in the present paper use a different distribution of the nodes, 
that can be sampled way more efficiently. 
Under very mild requirements, for example if an $L^2(\Gamma,\mu)$-orthonormal basis of $V_n$ is known in explicit form, efficient algorithms 
have been developed in \cite{CM2016} 
for the generation of the nodes of our stochastic cubatures.  
If the elements of the orthonormal basis have product form, then the computational complexity of such algorithms for the generation of $\M$ nodes is provably linear in $\M$ and $\s$.

More recently, a cubature formula based on least-squares approximants for periodic functions
has been proposed in \cite{Ott}, with different weights,
different distribution of the nodes
 and a different error analysis. 
The cubature from \cite{Ott} has been later refined in \cite{KUP}.

The outline of the article is the following: in Section~\ref{sec:dls_pol_spaces} we recall some useful results on the analysis of discrete least squares. In Section~\ref{sec:multi_h_o_quad} we present the cubature formulae, and provide conditions which ensure exactness and positive weights, together with convergence estimates. Section~\ref{sec:four} addresses the case where $V_n$ is chosen as a multivariate polynomial space. 
The proofs are collected in Section~\ref{sec:proofs}. In Section~\ref{sec:six} we draw some conclusions.

\section{Discrete least-squares approximation}
\label{sec:dls_pol_spaces}
\noindent
In this section we introduce the weighted discrete least-squares method with evaluations at random point sets, and recall the main results achieved in \cite{CM2016} for the analysis of its stability and accuracy.  

Given a Borel probability measure $\mu$ on $\Gamma$, we introduce the $L^2(\Gamma,\mu)$ inner product 
\begin{equation}
\langle \genfun_1, \genfun_2 \rangle := \int_\Gamma \genfun_1(\y) \genfun_2(\y) \density(\y) d\lambda(\y),
\label{eq:cont_inn_prod}
\end{equation}
and the 
norm $\Vert \genfun \Vert:= \langle \genfun , \genfun \rangle^{1/2}$. 
We work under the following assumption. 
\begin{assumption}
\label{thm:assumption}
There exists an $L^2(\Gamma,\mu)$ orthonormal basis $(\basisfuncexp_j)_{j\geq 1}$, and this basis contains the constant function over $\Gamma$. 
\end{assumption}

Using the orthonormal basis $(\basisfuncexp_j)_{j\geq 1}$ we define the approximation space as 
\begin{equation}
V_n:=\textrm{span}\{\basisfuncexp_1,\ldots,\basisfuncexp_n\}, 
\label{eq:def_V_n}
\end{equation}
and set $n:=\textrm{dim}(V_n)$. 
Without loss of generality we suppose that $\basisfuncexp_1\equiv 1$, and therefore $\basisfuncexp_1 \in V_n$ for any $n\geq 1$, as stated in the next assumption.  
\begin{assumption}
For any $n\geq 1$ the space $V_n$ contains $\basisfuncexp_1\equiv 1$.
\label{thm:assumptionbis}
\end{assumption}
For any given space $V_n$ we define the functions $\kappa:\Gamma \to \mathbb{R}$ and $w:\Gamma \to \mathbb{R}$ as 
\begin{equation}
\kappa(\y):=\left(\sum_{\indmeas =1}^n | \basisfuncexp_\indmeas(\y) |^2 \right)^{-1} \textnormal{ and } w(\y):=n \, \kappa(\y),
\label{eq:weight_function}
\end{equation}
whose denominators do not vanish thanks to Assumption~\ref{thm:assumptionbis}. 
For any space $V_n$ and any $n\geq 1$, Assumption~\ref{thm:assumptionbis} ensures the upper bound 
\begin{equation}
w(\y) \leq n, \quad \y \in \Gamma.
\label{eq:upper_bound_w}
\end{equation}
The function $\kappa$ is strictly positive over $\Gamma$. 
Sharper lower bounds (uniformly over $\Gamma$) can be obtained by exploiting the structure of the space $V_n$: for example with polynomial spaces such lower bounds are shown in Remark~\ref{sec:lower_bound_polynomials}. 

When $V_n$ is the space of algebraic polynomials of total degree $n-1$, the function $\kappa$ is known as the Christoffel function.
Using such functions, we define on $\Gamma$ the probability measure 
\begin{equation}
\label{eq:prob_meas_aux}
d\sigma := w^{-1} d\mu
= \dfrac{ \sum_{\indmeas =1}^n | \basisfuncexp_\indmeas(\y) |^2  }{n} d\mu
= \dfrac{ \sum_{\indmeas =1}^n | \basisfuncexp_\indmeas(\y) |^2  }{n} \rho(\y) d\lambda. 
\end{equation} 
In general $\sigma$ is not a product measure, even if $\mu$ is a product measure.
Next, we introduce the weighted discrete inner product 
\begin{equation}
\langle \genfun_1, \genfun_2 \rangle_\M := \dfrac{1}{\M}  \sum_{\indmeas = 1 }^{\M} w(y_\indmeas) \genfun_1(\y_\indmeas)\genfun_2(\y_\indmeas), 
\label{eq:disc_inn_prod}
\end{equation}
where the functions $w,\genfun_1,\genfun_2$ are evaluated at $\M$ points $\y_1,\ldots,\y_\M\in\Gamma$ that are independent and identically distributed according to $\sigma$. 
The discrete inner product is associated with the discrete seminorm $\Vert \genfun\Vert_\M:=\langle\genfun ,\genfun\rangle_\M^{1/2}$ 
for any $\genfun \in L^2(\Gamma,\mu)$.  
In the forthcoming sections, the random points $\y_1,\ldots,\y_\M \in \Gamma$ play the role of   nodes for the quadrature/cubature formula \eqref{eq:def_quadrature_general}.

For any function $\fexp:\Gamma\to \Rset$, we define its $L^2(\Gamma,\mu)$ projection onto $V_n$ as 
\begin{equation}
\Pi_n \fexp := \argmin_{\genpol \in V_n} \Vert \genpol - \fexp \Vert.
\label{eq:projection_L2}
\end{equation}

In many applications we do not have an explicit expression of the function $\fexp$, and can only evaluate its value $\fexp(\y)$ at a given parameter $\y \in \Gamma$. 
In such a situation, the projection \eqref{eq:projection_L2} cannot be computed, since it would require the explicit knowledge of the function $\fexp$. Hence, one can resort to the discrete least-squares approximation of $\fexp$ in $V_n$, defined as
\begin{equation}
\Pi_n^\M \fexp := \argmin_{\genpol \in V_n} \Vert \genpol - \fexp \Vert_\M,
\label{eq:projection_discrete_least_squares}
\end{equation}
where the minimization of the $L^2(\Gamma,\mu)$ norm has been replaced by the minimization of the discrete seminorm. 
Since the discrete seminorm uses pointwise evaluations of $\fexp$, throughout the paper we further assume that $\fexp(\y)$ is well defined at any $\y \in \Gamma$. 
The expansion of $\Pi_n^\M \fexp$ over the orthonormal basis reads  
\begin{equation}
\Pi_n^\M \fexp = \sum_{j=1}^n \coef_j \basisfuncexp_j,
\label{eq:expansion_pol_space}
\end{equation}
with $\coef:=(\coef_1,\ldots,\coef_n)^\top \in \mathbb{R}^n$ being the vector of the coefficients in the above expansion. 
Denote with $\design$ the design matrix, whose elements are defined as   
\begin{equation}
\label{eq:def_D}
\design_{\indmeas\indbasis}:= \sqrt{w(\y_\indmeas)} \basisfuncexp_\indbasis(\y_\indmeas), \quad \indmeas=1,\ldots,\M, \quad  \indbasis=1,\ldots,n, 
\end{equation}
and define the Gramian matrix $\gramian$ as
\[
\gramian := \dfrac{1}{\M} \design^\top \design.
\]

Moreover, we denote the Hadamard product of two vectors $p,q \in \Rset^\M$ as 
$$
p \odot q 
:= (p_1, \ldots, p_\M) \odot (q_1,\ldots,q_\M) 
= (p_1 q_1,\ldots,p_\M q_\M) \in \Rset^\M,   
$$
and use this product to define  
$\rhs:= 
\sqrt{w}
\odot \rhsfun=( \sqrt{w(\y_1)} \fexp(\y_1),\ldots,\sqrt{w(\y_\M)} \fexp(\y_\M) )^\top$,  
where $\rhsfun:=( \fexp(\y_1),\ldots,\fexp(\y_\M))^\top$ contains the evaluations of $\fexp$ at the nodes $(\y_\indmeas)_{1 \leq \indmeas \leq \M}$.

The projection $\Pi_n^\M \fexp$ of $\fexp$ in \eqref{eq:projection_discrete_least_squares} can be computed by solving the linear system 
\begin{equation}
\gramian
\coef = \dfrac{1}{\M} \design^\top \rhs. 
\label{eq:normal_equations}
\end{equation}
If the matrix $\gramian$ is nonsingular then the solution to the linear system \eqref{eq:normal_equations} is 
\begin{equation}
\coef =  \dfrac{1}{\M} \gramian^{-1} \design^\top \rhs,  
\label{eq:coefficients}
\end{equation}
thus defining a unique discrete least-squares approximation of $\fexp$ through \eqref{eq:expansion_pol_space}.
For any integer $n \geq 1$, we say that a point set $\y_1,\ldots,\y_\M \in \Gamma$ with $\M\geq n$ is unisolvent for a given space $V_n$ if 
\begin{equation}
\textrm{det}(\gramian)\neq 0.
\label{eq:cond_gramian_nonsing}
\end{equation} 
A unisolvent point set ensures that the operator $\Pi_n^\M$ is well defined and uniquely associated to the space $V_n$. 
When $\M < n$ the matrix $\gramian$ is rank deficient, and condition \eqref{eq:cond_gramian_nonsing} cannot be fulfilled. 
Condition \eqref{eq:cond_gramian_nonsing} does not depend on the choice of the basis of $V_n$: using any other basis $(\widetilde{\basisfuncexp}_1,\ldots,\widetilde{\basisfuncexp}_n )^\top = M \, (\basisfuncexp_1,\ldots,\basisfuncexp_n )^\top$ of $V_n$ related to the $\basisfuncexp_1,\ldots,\basisfuncexp_n$ by means of a suitable nonsingular matrix $M$ yields 
$\textrm{det}(M^\top \gramian M) \neq 0 \iff \textrm{det}(\gramian)  \neq 0 $.   
Given any matrix $A\in\Rset^{m\times n}$, for any $1 \leq p \leq +\infty$ we introduce the operator norm   
\[
\vvvert  A  \vvvert_{\ell_p} := \sup_{x \in \Rset^n \atop x \neq 0} \dfrac{ \Vert A x   \Vert_{\ell_p} }{ \Vert x \Vert_{\ell_p}},
\]
and define $\vvvert \cdot \vvvert:=\vvvert \cdot \vvvert_{\ell_2}$ when $p=2$. 
Condition \eqref{eq:cond_gramian_nonsing}
does not take into account situations where $\gramian$ is nonsingular but still very ill-conditioned, which might occur even when $\M \geq n$. 
From a numerical standpoint, it is desirable that $\gramian$ is also well conditioned. 
A natural vehicle for quantifying the ill-conditioning of $\gramian$ is to look at how much it deviates from the identity matrix $\cI$ with compatible size, 
\begin{equation}
\vvvert  \gramian - \cI \vvvert \leq \delta, 
\label{eq:deviation_identity}
\end{equation}
for some $\delta>0$. 
When $\delta\in(0,1)$ condition \eqref{eq:deviation_identity} can be rewritten as the norm equivalence 
\begin{equation}
(1-\delta) \Vert \genpol \Vert^2 
\leq
\| \genpol \|_{\M}^2
\leq
(1+\delta) \| \genpol \|^2, 
\quad
\genpol \in V_n, 
\label{eq:norm_equivalence_norms}
\end{equation}
or as 
\begin{equation}
1-\delta
\leq 
\vvvert\gramian \vvvert
\leq 
1+\delta.
\end{equation}

We now recall the main results achieved in \cite{CM2016} concerning the analysis of the stability and accuracy of weighted discrete least-squares approximation with evaluations at random points. 
For any $\fexp\in L^2(\Gamma,\mu)$ we define its best approximation error in the $L^2(\Gamma,\mu)$ norm as
$$
e_2(\fexp):=\| \fexp - \Pi_n \fexp \| \tcheck{= \min_{v\in V_n} \| \fexp - v \|,} 
$$
and its weighted $L^\infty$ best approximation error as 
$$
e_{\infty,w}(\fexp):=\inf_{\genpol \in V_n }   
\sup_{\y \in \Gamma}  \sqrt{w(\y)} | \fexp(\y) - \genpol(\y) |.  
$$
Given any $\delta \in (0,1)$ we define the quantity  
\begin{equation}
\label{eq:def_cost_delta}
\cdelta(\delta):= (1+\delta) \ln\left(1+\delta\right) -\delta > 0,  
\end{equation}
that satisfies the upper and lower bounds in \eqref{eq:upper_bound_zeta_delta} and \eqref{eq:lower_bound_zeta_delta}. 

Also recall the conditioned weighted least-squares estimator introduced in \cite{CM2016}, defined as 
$$
\widetilde{\fexp} := 
\begin{cases}
\Pi_n^\M \fexp, & \textrm{ if }  \vvvert \gramian - \cI \vvvert < \delta, \\
0, & \textrm{ otherwise. } 
\end{cases}
$$

Next we quote from \cite[Corollary 1]{CM2016} the following result. 
\begin{theorem}
\label{thm:dls_multivariate}
In any dimension $\s$, for any real $\param > 0$, any $\delta \in (0,1)$ and any $n\geq 1$, if the $\M$ i.i.d.~points $\y_1,\ldots,\y_\M$ are drawn from $\sigma$ defined in \eqref{eq:prob_meas_aux} and 
\begin{equation}
\dfrac{\M}{\ln \M} \geq \dfrac{ 1 + \param }{ \cdelta(\delta) } n, 
\label{eq:condition_points}
\end{equation}
then the following holds:
\begin{itemize}
\item[(i)] the matrix $\gramian$ satisfies 
\begin{equation}
\label{eq:statement_gramian}
\Pr\left(   \vvvert \gramian - \cI \vvvert \leq \delta   \right) 
>
1-  2 n  \M^{-(\param+1)}
\geq  
1-  2 \M^{-\param}; 
\end{equation}
\item[(ii)] for all $\fexp$ such that $\sup_{\y \in \Gamma}  \sqrt{w(\y)} | \fexp(\y) |<+\infty$ the weighted least-squares estimator satisfies 
\begin{equation}
\label{eq:prob_estimate}
\Pr\left(  \|   \fexp  - \Pi_n^m \fexp  \|  \leq \left( 1+\sqrt2  \right) 
e_{\infty,w}(\fexp)
 \right)  
> 
1-  2 n  \M^{-(\param+1)}
\geq 
1- 2\M^{-\param}; 
\end{equation}
\item[(iii)] if $\fexp \in L^2(\Gamma,\mu)$ then the conditioned estimator satisfies  
\begin{equation}
\label{eq:expect_estimate}
\mathbb{E}\left(  \|   \fexp  - \widetilde{\fexp}  \|^2  \right) \leq \left( 1 + \varepsilon(\M)  \right)  (e_2(\fexp))^2 + 2 \| \fexp \|^2 \M^{-\param}, 
\end{equation}
where 
\begin{equation*}
\varepsilon(\M):=\dfrac{ 4\cdelta(\delta) }{(1+\param) \ln \M}
\end{equation*}
decreases monotonically to zero as $\M$ increases. 
\end{itemize}
\end{theorem}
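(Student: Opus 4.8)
The plan is to deduce all three statements from a single concentration inequality for the random Gramian $\gramian$. Writing $\basisfuncexp(\y):=(\basisfuncexp_1(\y),\ldots,\basisfuncexp_n(\y))^\top$, we have $\gramian=\frac{1}{\M}\design^\top\design=\frac{1}{\M}\sum_{\indmeas=1}^\M X_\indmeas$ with $X_\indmeas:=w(\y_\indmeas)\,\basisfuncexp(\y_\indmeas)\basisfuncexp(\y_\indmeas)^\top$ i.i.d.\ rank-one symmetric positive semidefinite matrices. Two structural facts drive everything. Since the nodes are drawn from $d\sigma=w^{-1}d\mu$ and the $\basisfuncexp_j$ are $L^2(\Gamma,\mu)$-orthonormal,
\[
\mathbb{E}[X_\indmeas]=\int_\Gamma w(\y)\,\basisfuncexp(\y)\basisfuncexp(\y)^\top\,d\sigma(\y)=\int_\Gamma \basisfuncexp(\y)\basisfuncexp(\y)^\top\,d\mu(\y)=\cI,
\]
so $\mathbb{E}[\gramian]=\cI$; and the defining identity $w(\y)\sum_{j=1}^n|\basisfuncexp_j(\y)|^2=n$ forces $\vvvert X_\indmeas\vvvert=n$ almost surely. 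These are precisely the hypotheses of the matrix Chernoff inequality, with per-summand bound $n$ and sum-of-means $\M\cI$.

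For (i), I would apply the upper and lower matrix Chernoff tail estimates to $\sum_\indmeas X_\indmeas$. The probability that $\lambda_{\max}$ exceeds $(1+\delta)\M$, and the probability that $\lambda_{\min}$ falls below $(1-\delta)\M$, are each at most $n\exp(-\cdelta(\delta)\,\M/n)$, the exponent being exactly $\cdelta(\delta)=(1+\delta)\ln(1+\delta)-\delta$ (the lower tail in fact decays faster). Since $\{\vvvert\gramian-\cI\vvvert\le\delta\}$ is the event that all eigenvalues of $\frac1\M\sum_\indmeas X_\indmeas$ lie in $[1-\delta,1+\delta]$, a union bound over the two tails gives failure probability at most $2n\exp(-\cdelta(\delta)\M/n)$. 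Condition \eqref{eq:condition_points}, i.e.\ $\cdelta(\delta)\,\M/n\ge(1+\param)\ln\M$, turns this into $2n\M^{-(\param+1)}$, proving \eqref{eq:statement_gramian} (the coarser bound follows from $n\le\M$).

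For (ii), I would work on the favourable event of (i), on which the norm equivalence \eqref{eq:norm_equivalence_norms} holds. Choosing $\genpol\in V_n$ near-optimal for $e_{\infty,w}(\fexp)$ and using that $\Pi_n^\M$ fixes $V_n$ and is the $\langle\cdot,\cdot\rangle_\M$-orthogonal projection, one has $\Pi_n^\M\fexp-\genpol=\Pi_n^\M(\fexp-\genpol)$, whence
\[
\Vert\Pi_n^\M\fexp-\genpol\Vert^2\le(1-\delta)^{-1}\Vert\fexp-\genpol\Vert_\M^2\le(1-\delta)^{-1}\sup_{\y\in\Gamma}w(\y)\,|\fexp(\y)-\genpol(\y)|^2.
\]
Combined with $\Vert\fexp-\genpol\Vert\le\sup_{\y\in\Gamma}\sqrt{w(\y)}\,|\fexp(\y)-\genpol(\y)|$, which follows from $\int_\Gamma w^{-1}d\mu=1$, the triangle inequality yields $\Vert\fexp-\Pi_n^\M\fexp\Vert\le(1+(1-\delta)^{-1/2})\,e_{\infty,w}(\fexp)$, i.e.\ the constant $1+\sqrt2$ for the representative value $\delta=\tfrac12$.

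Part (iii) is where the real work lies, since the crude constant $(1-\delta)^{-1}$ must be upgraded to the vanishing factor $\varepsilon(\M)$. I would split $\mathbb{E}(\Vert\fexp-\widetilde\fexp\Vert^2)$ over the favourable event and its complement; on the complement $\widetilde\fexp=0$ and (i) bounds the contribution by $2\Vert\fexp\Vert^2\M^{-\param}$. On the favourable event set $\err:=\fexp-\Pi_n\fexp\perp V_n$, so that $\fexp-\Pi_n^\M\fexp=\err-\Pi_n^\M\err$ and Pythagoras in $L^2(\Gamma,\mu)$ gives $\Vert\fexp-\Pi_n^\M\fexp\Vert^2=(e_2(\fexp))^2+\Vert\Pi_n^\M\err\Vert^2$. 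The decisive step is to bound the last term in expectation: with $z_j:=\langle\err,\basisfuncexp_j\rangle_\M$ one has $\Vert\Pi_n^\M\err\Vert^2=\Vert\gramian^{-1}z\Vert_{\ell_2}^2\le(1-\delta)^{-2}\Vert z\Vert_{\ell_2}^2$ on the favourable event, and since $\err\perp V_n$ each $z_j$ is a mean-zero empirical average, so
\[
\mathbb{E}\Vert z\Vert_{\ell_2}^2=\frac{1}{\M}\int_\Gamma w(\y)\,|\err(\y)|^2\sum_{j=1}^n|\basisfuncexp_j(\y)|^2\,d\mu(\y)=\frac{n}{\M}(e_2(\fexp))^2,
\]
where the identity $w\sum_j|\basisfuncexp_j|^2=n$ is used once more. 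Inserting \eqref{eq:condition_points} as $n/\M\le\cdelta(\delta)/((1+\param)\ln\M)$ and absorbing $(1-\delta)^{-2}$ into the constant $4$ (valid for $\delta\le\tfrac12$) yields $\varepsilon(\M)=\tfrac{4\cdelta(\delta)}{(1+\param)\ln\M}$, completing \eqref{eq:expect_estimate}. The main obstacle is exactly this refinement: a uniform bound on the favourable event only produces a constant prefactor, and the decay of $\varepsilon(\M)$ requires the orthogonality $\err\perp V_n$ together with the exact variance identity rather than a worst-case estimate.
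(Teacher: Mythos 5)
Your proposal cannot be checked against an in-paper argument, because the paper offers none: Theorem~\ref{thm:dls_multivariate} is quoted verbatim from \cite[Corollary 1]{CM2016}, so the relevant benchmark is the proof in that reference. Your reconstruction follows exactly the same route as that source. Part (i) is the matrix Chernoff bound applied to the i.i.d.\ rank-one summands $X_\indmeas=w(\y_\indmeas)\basisfuncexp(\y_\indmeas)\basisfuncexp(\y_\indmeas)^\top$, driven by the two structural facts you isolate, $\E(X_\indmeas)=\cI$ (orthonormality plus the change of measure $d\sigma=w^{-1}d\mu$) and $\vvvert X_\indmeas\vvvert=n$ a.s.\ (the identity $w\sum_j|\basisfuncexp_j|^2\equiv n$); your remark that the lower tail decays faster than the upper one, so that $2n\exp(-\cdelta(\delta)\M/n)$ bounds both, is correct, as is the reduction $n\le\M$ for the coarser probability bound. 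Part (ii) correctly combines a near-minimizer for $e_{\infty,w}(\fexp)$ with the norm equivalence \eqref{eq:norm_equivalence_norms} and the $\Vert\cdot\Vert_\M$-contraction property of $\Pi_n^\M$. Part (iii) correctly uses the Pythagorean split $\Vert\fexp-\Pi_n^\M\fexp\Vert^2=(e_2(\fexp))^2+\Vert\Pi_n^\M\err\Vert^2$ on the good event, the bound $\Vert\gramian^{-1}z\Vert_{\ell_2}\le(1-\delta)^{-1}\Vert z\Vert_{\ell_2}$, and the exact identity $\E\Vert z\Vert_{\ell_2}^2=\frac{n}{\M}(e_2(\fexp))^2$, which hinges on $\err\perp V_n$; incidentally, this last identity is exactly what the present paper proves as Lemma~\ref{thm:bound_exp_l2_norm} (there used for Theorem~\ref{thm:estimates_quadrature}), since $z=\frac{1}{\M}\design^\top W^{1/2}\err$.

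The one genuine discrepancy concerns the range of $\delta$. Your argument yields the constants $1+(1-\delta)^{-1/2}$ in (ii) and $(1-\delta)^{-2}$ (in place of $4$) in (iii), which match or improve the stated $1+\sqrt2$ and $4$ only for $\delta\le\frac12$, as you yourself flag. The theorem, however, is asserted for every $\delta\in(0,1)$, so strictly speaking your proof leaves $\delta\in(\frac12,1)$ uncovered, and this case cannot be repaired within your scheme: for $\delta>\frac12$ condition \eqref{eq:condition_points} is weaker than the one associated with $\delta=\frac12$, so the Chernoff bound guarantees the event $\{\vvvert\gramian-\cI\vvvert\le\frac12\}$ only with probability $1-2n\M^{-(1+\param)\cdelta(1/2)/\cdelta(\delta)}$, with exponent strictly smaller than $1+\param$. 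This limitation is really inherited from the statement as quoted (the source establishes the accuracy estimates at $\delta=\frac12$), and it is harmless for this paper, whose quantitative invocations of parts (ii)--(iii) use small $\delta$ (\emph{e.g.}~$\delta=\sqrt{w_{\textrm{min}}}/(3\sqrt{n})\le\frac13$ in Theorem~\ref{thm:positive_weights}); but a self-contained proof should either restrict to $\delta\in(0,\frac12]$ or carry the $\delta$-dependent constants throughout.
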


The bound \eqref{eq:deviation_identity} for $\gramian$ implies a bound of the same type for its inverse: for any $\delta \in (0,1)$ and $\M\geq n$ it holds that 
\begin{equation}
 \vvvert \gramian - \cI \vvvert \leq \delta \implies \vvvert \gramian^{-1} - \cI  \vvvert\leq \dfrac{\delta}{1-\delta}, 
\label{eq:inverse_gramian}
\end{equation}
since 
$$
\vvvert  \gramian^{-1} - \cI \vvvert  = \vvvert \gramian^{-1} \left( \cI - \gramian^{}  \right) \vvvert  \leq  \vvvert \gramian^{-1} \vvvert \ \vvvert  \gramian^{} - \cI \vvvert, 
$$
and \eqref{eq:deviation_identity} also implies $\vvvert  \gramian \vvvert\leq 1+\delta$ and $\vvvert  \gramian^{-1} \vvvert \leq (1-\delta)^{-1}$. 
Another implication of \eqref{eq:deviation_identity} is that 
\begin{equation}
\vvvert \gramian - \cI \vvvert \leq \delta \implies 
\textrm{cond}(\gramian) \leq \dfrac{1+\delta}{1-\delta}. 
\label{eq:bound_cond_number}
\end{equation}

From \eqref{eq:statement_gramian} together with \eqref{eq:inverse_gramian} we have the following corollary of Theorem~\ref{thm:dls_multivariate}. 
\begin{corollary}
\label{thm:dls_multivariate_coro}
In any dimension $\s$, for any real $\param > 0$, any $\delta \in (0,1)$ and any $n \geq 1$, if the $\M$ i.i.d.~points $\y_1,\ldots,\y_\M$ are drawn from $\sigma$ defined in \eqref{eq:prob_meas_aux} and condition  \eqref{eq:condition_points} holds true then 
\begin{equation}
\label{eq:statement_inverse}
\Pr\left(   \vvvert \gramian^{-1} - \cI \vvvert \leq \dfrac{\delta }{1-\delta}  \right) > 1-  2 \M^{-\param}.
\end{equation}
\end{corollary}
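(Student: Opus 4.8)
The plan is to derive the bound on $\gramian^{-1}$ entirely as a consequence of the corresponding bound on $\gramian$ furnished by Theorem~\ref{thm:dls_multivariate}(i), combined with the purely deterministic implication \eqref{eq:inverse_gramian} already established in the excerpt. The key observation is that \eqref{eq:inverse_gramian} should be read as an \emph{event inclusion}: for every realization of the i.i.d.\ sample $\y_1,\ldots,\y_\M$ at which $\vvvert \gramian - \cI \vvvert \leq \delta$ holds, the same realization automatically satisfies $\vvvert \gramian^{-1} - \cI \vvvert \leq \delta/(1-\delta)$. Once this is recognized, the statement follows by monotonicity of probability, with no further probabilistic work required.

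Concretely, first I would introduce the two measurable events
\[
A := \left\{ \vvvert \gramian - \cI \vvvert \leq \delta \right\},
\qquad
B := \left\{ \vvvert \gramian^{-1} - \cI \vvvert \leq \tfrac{\delta}{1-\delta} \right\},
\]
both of which depend only on the random design matrix $\design$, hence on the sample $\y_1,\ldots,\y_\M$ drawn from $\sigma$. On the event $A$ the bound $\vvvert \gramian - \cI \vvvert \leq \delta < 1$ places all eigenvalues of the symmetric matrix $\gramian$ in the interval $[1-\delta,1+\delta]$, so $\gramian$ is nonsingular and $\gramian^{-1}$ is well defined; thus $B$ is meaningful on $A$. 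The implication \eqref{eq:inverse_gramian} then gives the pointwise inclusion $A \subseteq B$.

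Second, by monotonicity of the probability measure, $A \subseteq B$ yields $\Pr(B) \geq \Pr(A)$. Finally I would invoke \eqref{eq:statement_gramian} from Theorem~\ref{thm:dls_multivariate}(i), which is valid precisely under the hypotheses assumed in the corollary (namely condition \eqref{eq:condition_points} and sampling from $\sigma$), and which states $\Pr(A) > 1 - 2\M^{-\param}$. Chaining these two inequalities gives $\Pr(B) > 1 - 2\M^{-\param}$, which is exactly \eqref{eq:statement_inverse}.

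There is no genuine obstacle here, since the corollary is a direct composition of an already-proven deterministic estimate with an already-proven probabilistic estimate. The only point meriting a word of care is the well-posedness of $\gramian^{-1}$ on the event $A$, which I would dispatch as above by noting that $\delta \in (0,1)$ forces the eigenvalues of $\gramian$ away from zero; this ensures that the event $B$ is defined on all of $A$ and that the inclusion $A \subseteq B$ is legitimate.
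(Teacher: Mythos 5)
Your proposal is correct and is exactly the paper's argument: the paper derives the corollary by combining the probabilistic bound \eqref{eq:statement_gramian} on $\vvvert \gramian - \cI \vvvert$ with the deterministic implication \eqref{eq:inverse_gramian}, which is precisely your event inclusion $A \subseteq B$ followed by monotonicity of probability. Your added remark on the well-posedness of $\gramian^{-1}$ on the event $A$ is a sound (if implicit in the paper) touch and does not change the route.
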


Since \eqref{eq:deviation_identity} implies that $\gramian$ is nonsingular, another consequence of Theorem~\ref{thm:dls_multivariate} is the next corollary. 

\begin{corollary}
\label{thm:coro_well_conditioned}
In any dimension $\s$, for any real $\param>0$, any $\delta \in (0,1)$ and any $n\geq 1$, if $\M$ satisfies \eqref{eq:condition_points} then the random point set  $\y_1,\ldots,\y_\M$ with $\M$ i.i.d.~points drawn from $\sigma$ is unisolvent for $V_n$ with probability larger than $1-2\M^{-\param}$. 
\end{corollary}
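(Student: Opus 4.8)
The plan is to derive the corollary directly from part~(i) of Theorem~\ref{thm:dls_multivariate}, using only the elementary observation---already recorded in the sentence preceding the statement---that the spectral deviation bound \eqref{eq:deviation_identity} forces $\gramian$ to be nonsingular as soon as $\delta<1$.

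First I would work on the deterministic event $\{\vvvert \gramian - \cI \vvvert \leq \delta\}$. Since $\gramian=\tfrac{1}{\M}\design^\top\design$ is symmetric and positive semidefinite, the bound $\vvvert \gramian - \cI \vvvert \leq \delta$ is equivalent to saying that every eigenvalue of $\gramian$ lies in $[1-\delta,\,1+\delta]$. For $\delta\in(0,1)$ the lower endpoint $1-\delta$ is strictly positive, so all eigenvalues of $\gramian$ are bounded below by $1-\delta>0$; in particular $\det(\gramian)\geq(1-\delta)^n>0$, whence $\gramian$ is nonsingular and condition \eqref{eq:cond_gramian_nonsing} is met. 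Equivalently, a Neumann series argument applied to $\gramian=\cI-(\cI-\gramian)$ with $\vvvert\cI-\gramian\vvvert\leq\delta<1$ yields invertibility of $\gramian$. Thus the event $\{\vvvert \gramian - \cI \vvvert \leq \delta\}$ is contained in the event that $\y_1,\ldots,\y_\M$ is unisolvent for $V_n$.

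By monotonicity of probability it then follows that
$$
\Pr\left(\y_1,\ldots,\y_\M \text{ unisolvent for } V_n\right)\;\geq\;\Pr\left(\vvvert \gramian-\cI\vvvert\leq\delta\right).
$$
Invoking part~(i) of Theorem~\ref{thm:dls_multivariate}, under condition \eqref{eq:condition_points} the right-hand side exceeds the bound $1-2\M^{-\param}$ displayed in \eqref{eq:statement_gramian}, which is exactly the claimed probability. I do not expect any genuine obstacle here: the single step worth spelling out is the deterministic implication $\vvvert\gramian-\cI\vvvert\leq\delta\Rightarrow\det(\gramian)\neq0$, which hinges only on $\delta<1$, while the probabilistic content is entirely inherited from Theorem~\ref{thm:dls_multivariate}(i).
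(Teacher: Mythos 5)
Your proposal is correct and follows essentially the same route as the paper: the paper's (implicit) proof is precisely the observation that $\vvvert \gramian - \cI \vvvert \leq \delta$ with $\delta \in (0,1)$ forces $\gramian$ to be nonsingular, hence the point set unisolvent, combined with the probability bound \eqref{eq:statement_gramian} from Theorem~\ref{thm:dls_multivariate}(i). Your eigenvalue (or Neumann series) justification of the deterministic implication and the monotonicity step merely spell out what the paper leaves to the reader.
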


Sampling algorithms for the generation of the random point set $\y_1,\ldots,\y_\M$ from \eqref{eq:prob_meas_aux} have been developed in \cite{CM2016} when $\Gamma$ is a Cartesian domain and $\mu$ is a product measure. The computational cost of these algorithms scales linearly with respect to the dimension $\s$ and to $\M$.

\section{Randomized high-order cubature formulae}
\label{sec:multi_h_o_quad}
\noindent
In this section we construct cubature formulae of the form \eqref{eq:def_quadrature_general}, to approximate the multivariate integral \eqref{eq:integral_analytic} of a smooth function $\fexp:\Gamma\to\Rset$ with respect to a given probability measure $\mu$. 
The construction of such cubature formulae uses the discrete least-squares approximation \eqref{eq:projection_discrete_least_squares} of the integrand function $\fexp$. 
As in the previous sections, $\y_1,\ldots,\y_\M$ and $\quadweights_1,\ldots,\quadweights_\M$ denote its nodes and weights, respectively.  
The first step in the development of our cubature formula consists in the evaluation $\fexp(\y_1),\ldots,\fexp(\y_\M)$ of the integrand function $\fexp$ at the nodes $\y_1,\ldots,\y_\M$.  
The second step is the choice of the weights. 
Let $W$ be the matrix defined element-wise as $W_{ii}:=w(\y_i)$ for $i=1,\ldots,\M$ and $W_{ij}=0$ for $i,j=1,\ldots,\M$ with $i\neq j$. 
We consider weights $\quadweights=(\quadweights_1,\ldots,\quadweights_\M)^\top$ of the form
\begin{equation}
\quadweights := 
(\sqrt{w(\y_1)},\ldots,\sqrt{w(\y_\M)})^\top
\odot 
\dfrac{1}{\M} \design \gramian^{-1} e_1= 
\dfrac{1}{\M}
W^{1/2}
 \design \gramian^{-1} e_1, 
\label{eq:def_weight_least_squares}
\end{equation}
where $e_1:=(1,0,\ldots,0)^\top \in \Rset^{n}$ denotes the vector with all components except the first being equal to zero, and the first component being equal to one. 
The components of $\quadweights$ are given by 
\begin{equation}
\quadweights_\indmeas 
:= 
\dfrac{1}{\M} \left( W^{1/2} \design \gramian^{-1} e_1 \right)_\indmeas
= 
\dfrac{1}{\M} 
\sqrt{
w(\y_\indmeas)
} 
\design_\indmeas \gramian^{-1} e_1, 
\quad  \indmeas=1,\ldots,\M,
\label{eq:weight_least_squares}
\end{equation}
with 
\[
\design_\indmeas := \sqrt{w(\y_\indmeas)} \left( \basisfuncexp_1(\y_\indmeas),\ldots, \basisfuncexp_{n}(\y_\indmeas) \right). 
\]

In the next lemma we prove conditions on the nodes and weights such that the cubature  \eqref{eq:def_quadrature_general} is exact on $V_n$,  
see Section~\ref{sec:proofs} for the proof.  

\begin{lemma}
\label{thm:lemma_weights_exactness}
In any dimension $\s \geq 1$ and for any $n \geq 1$, let $\M\geq n$  nodes $\y_1,\ldots,\y_\M\in\Gamma$ be a unisolvent point set for the space $V_n$. 
If the weights $\quadweights_1,\ldots,\quadweights_\M$ are chosen as in \eqref{eq:def_weight_least_squares} then the formula  \eqref{eq:def_quadrature_general} satisfies 
\begin{equation}
\Q_\M(\fexp)=I(\Pi_n^\M \fexp), \quad \textrm{for any } \fexp \in L^2(\Gamma,\mu), 
\label{eq:lemma_ex_phi}
\end{equation}
and
\begin{equation}
\Q_\M(\genpol)=I(\genpol) , \quad \textrm{for any }\genpol \in V_n. 
\label{eq:lemma_ex_pol}
\end{equation}
\end{lemma}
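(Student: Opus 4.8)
The plan is to reduce both claims to the single observation that integration against $\mu$ acts on the discrete least-squares projection by simply reading off its first coefficient, and then to match that coefficient with $\Q_\M(\fexp)$ through a short transpose manipulation. Concretely, I aim to show $\Q_\M(\fexp) = \coef_1$ and $I(\Pi_n^\M\fexp) = \coef_1$, where $\coef_1 = e_1^\top\coef$ is the leading coefficient in the expansion \eqref{eq:expansion_pol_space}.

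First I would compute $I(\Pi_n^\M\fexp)$. Writing $\Pi_n^\M\fexp = \sum_{j=1}^n \coef_j\basisfuncexp_j$ and using linearity of $I$ gives $I(\Pi_n^\M\fexp) = \sum_{j=1}^n \coef_j\, I(\basisfuncexp_j)$. The key point, furnished by Assumption~\ref{thm:assumptionbis} together with orthonormality, is that $\basisfuncexp_1\equiv 1$ forces $I(\basisfuncexp_j) = \langle \basisfuncexp_j, \basisfuncexp_1\rangle = \delta_{j1}$. Hence only the $j=1$ term survives and $I(\Pi_n^\M\fexp) = \coef_1 = e_1^\top\coef$.

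Next I would match this with the cubature value. Writing $\Q_\M(\fexp) = \quadweights^\top\rhsfun$ and inserting the weight formula \eqref{eq:def_weight_least_squares}, $\quadweights = \tfrac{1}{\M}W^{1/2}\design\gramian^{-1}e_1$, yields $\Q_\M(\fexp) = \tfrac{1}{\M}\,e_1^\top\gramian^{-\top}\design^\top W^{1/2}\rhsfun$. Two simplifications close the gap: the Gramian $\gramian = \tfrac{1}{\M}\design^\top\design$ is symmetric, so $\gramian^{-\top} = \gramian^{-1}$ (invertibility coming from the unisolvency hypothesis \eqref{eq:cond_gramian_nonsing}); and $W^{1/2}\rhsfun = \rhs$ directly from the definition of $\rhs$. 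Comparing the result $\tfrac{1}{\M}\,e_1^\top\gramian^{-1}\design^\top\rhs$ with the coefficient formula \eqref{eq:coefficients}, namely $\coef = \tfrac{1}{\M}\gramian^{-1}\design^\top\rhs$, shows it equals $e_1^\top\coef = \coef_1$. Combined with the previous paragraph this establishes \eqref{eq:lemma_ex_phi}.

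Finally, \eqref{eq:lemma_ex_pol} follows as a corollary: for $\genpol\in V_n$ the minimizer $\Pi_n^\M\genpol$ equals $\genpol$ itself, since $\genpol$ attains the zero value of $\|\,\cdot - \genpol\|_\M$ over $V_n$ and unisolvency makes the minimizer unique, so applying \eqref{eq:lemma_ex_phi} gives $\Q_\M(\genpol) = I(\Pi_n^\M\genpol) = I(\genpol)$. I do not expect a genuine obstacle here, as the argument is linear-algebraic bookkeeping; the only points requiring care are the symmetry/transpose step for $\gramian$, the identification $W^{1/2}\rhsfun = \rhs$, and the brief justification that $\Pi_n^\M$ fixes $V_n$ under unisolvency.
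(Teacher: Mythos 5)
Your proposal is correct and follows essentially the same route as the paper: both arguments show that $I(\Pi_n^\M\fexp)=\coef_1$ via $\basisfuncexp_1\equiv 1$ and orthonormality, and that $\Q_\M(\fexp)=e_1^\top\coef=\coef_1$ by transposing the weight formula \eqref{eq:def_weight_least_squares} and comparing with \eqref{eq:coefficients}, after which \eqref{eq:lemma_ex_pol} follows because $\Pi_n^\M$ fixes $V_n$. Your explicit remarks on the symmetry of $\gramian$ and the identification $W^{1/2}\rhsfun=\rhs$ are exactly the (implicit) steps the paper uses, so there is nothing to add.
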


\begin{remark}
\label{thm:computation_weights}
The weights in equation \eqref{eq:def_weight_least_squares} can be calculated as $\quadweights=\frac{1}{\M} W^{1/2} \design h$, where $h$ is the solution to the linear system $\gramian h =e_1$. 
Note that, from Corollary~\ref{thm:dls_multivariate_coro}, 
the matrix $\gramian$ is well conditioned under condition \eqref{eq:condition_points}. 
\end{remark}

For any $n\geq 1$, 
the projection $\Pi_n \fexp$ of $\fexp$ onto $V_n$ satisfies 
\begin{equation}
\label{eq:integral_proj_ex}
 I( \Pi_n \fexp)=I(\fexp), \quad \fexp \in L^2(\Gamma,\mu),  
\end{equation}
whose proof is identical to the proof of \eqref{eq:lemma_ex_phi}. 
Using \eqref{eq:lemma_ex_pol} together with \eqref{eq:integral_proj_ex} it follows that  
\begin{equation}
\label{eq:quadrature_proj_ex}
\Q_\M( \Pi_n \fexp )=I(\fexp), \quad \fexp \in L^2(\Gamma,\mu).
\end{equation}
Set $h_n:=\fexp - \Pi_n \fexp$, and define the vector $\err\in \mathbb{R}^\M$ whose components are given by $\err_\indmeas=h_n(\y_\indmeas)$ for any $\indmeas=1,\ldots,\M$. 
We can decompose the integration error of $\Q_\M$ into two terms named $S$ and $B$ respectively: 
\begin{align}
\nonumber 
\Q_\M(\fexp)- I(\fexp)
 & = \int_\Gamma \left(  \Pi_n^\M \fexp -\Pi_n \fexp  \right) \, d\mu  \\
\nonumber 
& = \int_\Gamma \Pi_n^\M \left( \fexp - \Pi_n \fexp \right) \, d\mu \\
\label{eq:bias_var_Q_m_mat_nodec}
& = \dfrac{1}{\M} e_1^\top \gramian^{-1} \design^\top W^{1/2} \err \\
\label{eq:bias_var_Q_m_mat}
& = \dfrac{1}{\M} e_1^\top  \design^\top W^{1/2} \err + \dfrac{1}{\M} e_1^\top ( \gramian^{-1} - \cI) \design^\top W^{1/2} \err \\
& = \underbrace{\dfrac{1}{\M} \sum_{\indmeas=1}^{\M} w(\y_\indmeas)  h_n(\y_\indmeas)}_{=:S} 
+  
\underbrace{ \dfrac{1}{\M} \sum_{\indmeas=1}^\M \left( 
e_1^\top \gramian^{-1} (\cI - \gramian) \design^\top W^{1/2}
\right)_\indmeas
 h_n(\y_\indmeas)  }_{=:B}, 
\label{eq:bias_var_Q_m}
\end{align}
where the first equality uses \eqref{eq:integral_proj_ex} and \eqref{eq:lemma_ex_phi}, the second equality follows from properties of projection operators, the third equality uses \eqref{eq:lemma_ex_phi} together with the definitions of $\Q_\M$ and $\err$. 
Notice that \eqref{eq:integral_proj_ex} implies 
\begin{equation}
\E(S)=\E\left( w \fexp \right) - \E\left( w \Pi_n \fexp \right) =0, 
\label{eq:zero_mean_err_w}
\end{equation}
but there is no reason for $B$ to have zero mean, and in general 
$$
\E(B)=\E(\Q_\M(\fexp)) - I(\fexp) \neq 0. 
$$
Usually $\mathbb{B}:=\E(B)$ is called the \emph{bias} of $\Q_\M$.    

By conditioning depending on the value of $\vvvert \gramian-\cI \vvvert $, for any $\delta \in (0,1)$ we can define another cubature formula
\begin{equation}
\widetilde \Q_\M(\fexp):= 
\begin{cases}
\Q_\M(\fexp)
, & \textrm{ if }  \vvvert \gramian - \cI \vvvert < \delta, \\
0, & \textrm{ otherwise, } 
\end{cases}
\label{eq:quadrature_conditioned}
\end{equation}
that uses the cubature formula $\Q_\M(\fexp)$ defined in 
\eqref{eq:def_quadrature_general} 
with weights \eqref{eq:def_weight_least_squares}. 
Notice that $\widetilde\Q_\M(\fexp)$ can also be written as a cubature formula of the form \eqref{eq:def_quadrature_general},  
with the same nodes as $\Q_\M(\fexp)$
but with weights given by 
\begin{equation}
\quadweights = 
\begin{cases}
\dfrac{1}{\M} 
W^{1/2}
\design \gramian^{-1} e_1,
& \textrm{ if }  \vvvert \gramian - \cI \vvvert < \delta, \\
(0,\ldots,0)^\top, & \textrm{ otherwise. } 
\end{cases}
\label{eq:quadrature_conditioned_weights}
\end{equation}

A consequence of \eqref{eq:lemma_ex_phi} is that $\widetilde \Q_\M(\fexp)=I(\widetilde{\fexp})$ for any $\fexp \in L^2(\Gamma,\mu)$ on both events $\{\vvvert \gramian - \cI \vvvert < \delta\}$ and $\{\vvvert \gramian - \cI \vvvert \geq \delta\}$. 
However, \eqref{eq:lemma_ex_pol} with $\Q_\M(\genpol)$ replaced by $\widetilde \Q_\M(\genpol)$ remains true only on 
the event $\{\vvvert \gramian - \cI \vvvert < \delta\}$, because $\widetilde \Q_\M(\genpol)=0$ for any $\genpol \in V_n$ when $\{\vvvert \gramian - \cI \vvvert \geq \delta\}$.
On the event $\{\vvvert \gramian - \cI \vvvert < \delta\}$, 
the integration error of $\widetilde \Q_\M$ can be decomposed as 
\begin{equation}
I(\fexp)- \widetilde \Q_\M(\fexp) = S + B, 
\label{eq:bias_var_tilde_Q_m}
\end{equation}
where the terms $S$ and $B$ are the same that appear in \eqref{eq:bias_var_Q_m}. 
The bias of $\widetilde \Q_\M$ is $
\widetilde{
\mathbb{B}
}
:=\E(\widetilde \Q_\M(\fexp))-I(\fexp)$. 
The expectation in the definitions of $\mathbb{B}$ and $\widetilde{\mathbb{B}}$ is over both events $\{\vvvert \gramian - \cI \vvvert < \delta\}$ and $\{ \vvvert \gramian - \cI \vvvert \geq \delta\}$. 
In contrast to $\mathbb{B}$, for $\widetilde{\mathbb{B}}$ such an expectation is always finite. 
The term $\widetilde{\mathbb{B}}$ asymptotically vanishes as $\M\to +\infty$, 
and the proof of this fact is postponed to the end of this section. 
However $\mathbb{B}$ and $\widetilde{\mathbb{B}}$ do not vanish, in general, when $\M$ is finite. 

The next theorem quantifies the integration error of the formula \eqref{eq:def_quadrature_general} in probability, and the integration error of the formula \eqref{eq:quadrature_conditioned} in expectation. 
Its proof is postponed to Section~\ref{sec:proofs}. 

\begin{theorem}
\label{thm:estimates_quadrature}
In any dimension $\s$, for any real $\param > 0$ and any $\delta \in(0,1)$, if the $\M$ i.i.d.~points $\y_1,\ldots,\y_\M$ are drawn from $\sigma$ defined in \eqref{eq:prob_meas_aux} and condition \eqref{eq:condition_points} holds true then: 
\begin{itemize}
\item[i)] the cubature formula \eqref{eq:def_quadrature_general} with weights chosen as in \eqref{eq:def_weight_least_squares}
satisfies \eqref{eq:lemma_ex_phi}--\eqref{eq:lemma_ex_pol} with probability larger than $1-2\M^{-\param}$;
\item[ii)] for all 
$\fexp$ such that $ \sup_\y \sqrt{ w(\y)} | \fexp(\y) | < +\infty$, 
the integration error of the cubature formula \eqref{eq:def_quadrature_general} with weights \eqref{eq:def_weight_least_squares} satisfies
\begin{equation}
\label{eq:probabilistic_estimate}
\Pr\left(
 \left|  
I(\fexp)
-
\Q_{\M}(\fexp) 
  \right| 
\leq 
(1+ \sqrt 2)
e_{\infty,w}(\fexp)
\right) 
\geq 1- 2\M^{-\param};
\end{equation}
\item[iii)] for any $\fexp \in L^2(\Gamma,\mu)$ the integration error of the cubature formula  \eqref{eq:quadrature_conditioned} satisfies 
\begin{equation}
\label{eq:expectation_estimate}
\mathbb{E}
\left( 
\left|  
I(\fexp)
-
\widetilde \Q_{\M}(\fexp)
  \right|^2 
\right) 
\leq 
\left( 1 + \varepsilon(\M)  \right)  
(e_2(\fexp))^2
+ 2 | I(\fexp) |^2 \M^{-\param},
\end{equation}
with $\varepsilon(\M)$ as in Theorem~\ref{thm:dls_multivariate}, and also satisfies
\begin{equation}
\label{eq:expectation_estimate_L1_alt}
\mathbb{E}
\left( 
\left|  
I(\fexp)
-
\widetilde \Q_{\M}(\fexp) 
  \right| 
\right) 
\leq 
\sqrt{ 
\dfrac{n}{\M}
}
\left( 1 + 
\dfrac{ \varepsilon(\M,n) }{ 1-\delta  } 
\right)  
e_2(\fexp)
+ 2 
|I(\fexp)|  
\M^{-\param},
\end{equation}
with 
$$
\varepsilon(\M,n):= 
\sqrt{ 4(1+2 \lceil \ln n \rceil ) }
\left( 
1+ \sqrt{ 4(1+2 \lceil \ln n \rceil ) } \sqrt{ \dfrac{n}{\M} }
\right)
\leq 
\left( 
4+ 8 \lceil \ln n \rceil   
\right)
\left( 1+
\sqrt{ \dfrac{n}{\M} } 
\right).
$$
\end{itemize}
\end{theorem}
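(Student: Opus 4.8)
Throughout I work on the ``well-conditioned event'' $A:=\{\vvvert\gramian-\cI\vvvert<\delta\}$, which under \eqref{eq:condition_points} satisfies $\Pr(A^c)\le 2\M^{-\param}$ by Theorem~\ref{thm:dls_multivariate}(i). Part~i) is then immediate: on $A$ the matrix $\gramian$ is nonsingular, so the nodes are unisolvent (Corollary~\ref{thm:coro_well_conditioned}) and Lemma~\ref{thm:lemma_weights_exactness} gives \eqref{eq:lemma_ex_phi}--\eqref{eq:lemma_ex_pol}. Part~ii) rests on one deterministic identity: whenever the nodes are unisolvent, $\Q_\M(\fexp)=I(\Pi_n^\M\fexp)$ by Lemma~\ref{thm:lemma_weights_exactness}, and since $\basisfuncexp_1\equiv 1$ has unit norm, $I(\genfun)=\langle\genfun,\basisfuncexp_1\rangle$ together with Cauchy--Schwarz yields
\begin{equation*}
|I(\fexp)-\Q_\M(\fexp)|=|\langle\fexp-\Pi_n^\M\fexp,\basisfuncexp_1\rangle|\le\|\fexp-\Pi_n^\M\fexp\|.
\end{equation*}
Inserting the accuracy bound \eqref{eq:prob_estimate} of Theorem~\ref{thm:dls_multivariate}(ii), which holds on the same well-conditioned event, delivers \eqref{eq:probabilistic_estimate}.

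For part~iii) I use that $\widetilde\Q_\M(\fexp)=I(\widetilde{\fexp})$ always, so on $A$ the previous Cauchy--Schwarz step gives $|I(\fexp)-\widetilde\Q_\M(\fexp)|\le\|\fexp-\Pi_n^\M\fexp\|$, whereas on $A^c$ one has $\widetilde\Q_\M(\fexp)=0$ and hence $|I(\fexp)-\widetilde\Q_\M(\fexp)|=|I(\fexp)|$. For the mean-squared estimate \eqref{eq:expectation_estimate} I split the expectation along $A$ and $A^c$: the $A$-part is controlled (bounding $|I(\cdot)|\le\|\cdot\|$ once more) by $\mathbb{E}(\mathbf 1_A\|\fexp-\Pi_n^\M\fexp\|^2)\le(1+\varepsilon(\M))(e_2(\fexp))^2$, which is exactly the stable-regime bound proved inside Theorem~\ref{thm:dls_multivariate}(iii), while the $A^c$-part contributes $|I(\fexp)|^2\Pr(A^c)\le 2|I(\fexp)|^2\M^{-\param}$. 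Conditioning is what replaces the crude $\|\fexp\|^2$ of a direct appeal to Theorem~\ref{thm:dls_multivariate}(iii) by the sharper $|I(\fexp)|^2$.

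The mean absolute estimate \eqref{eq:expectation_estimate_L1_alt} is the substantive part and exploits the decomposition \eqref{eq:bias_var_Q_m} of the error on $A$ into the Monte Carlo term $S$ and the bias term $B$; the $A^c$ event again contributes $2|I(\fexp)|\M^{-\param}$, leaving $\mathbb{E}(\mathbf 1_A|S|)+\mathbb{E}(\mathbf 1_A|B|)$ to estimate. Since each summand $w(\y_\indmeas)h_n(\y_\indmeas)$ of $S$ has zero mean by \eqref{eq:zero_mean_err_w},
\begin{equation*}
\mathbb{E}(\mathbf 1_A|S|)\le\mathbb{E}|S|\le\big(\mathbb{E}(S^2)\big)^{1/2}=\Big(\tfrac1\M\,\mathrm{Var}(w\,h_n)\Big)^{1/2}\le\Big(\tfrac1\M\textstyle\int_\Gamma w\,h_n^2\,d\mu\Big)^{1/2}\le\sqrt{\tfrac n\M}\,e_2(\fexp),
\end{equation*}
where the last step uses $w\le n$ from \eqref{eq:upper_bound_w}; this produces the prefactor $\sqrt{n/\M}$ and the ``$1$'' in the parenthesis. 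For the bias term, writing $z:=\tfrac1\M\design^\top W^{1/2}\err$ so that $B=e_1^\top(\gramian^{-1}-\cI)z$, factoring $\gramian^{-1}-\cI=\gramian^{-1}(\cI-\gramian)$ and using $\vvvert\gramian^{-1}\vvvert\le(1-\delta)^{-1}$ on $A$ gives
\begin{equation*}
\mathbb{E}(\mathbf 1_A|B|)\le\tfrac1{1-\delta}\,\mathbb{E}\big(\|(\cI-\gramian)z\|\big)\le\tfrac1{1-\delta}\big(\mathbb{E}(\vvvert\cI-\gramian\vvvert^2)\big)^{1/2}\big(\mathbb{E}(\|z\|^2)\big)^{1/2}.
\end{equation*}
The variance computation above, now summed over all $n$ coordinates and using $\sum_{j=1}^n\basisfuncexp_j^2=n/w$ from \eqref{eq:weight_function}, gives $\mathbb{E}(\|z\|^2)\le\tfrac n\M(e_2(\fexp))^2$, while a matrix Chernoff/moment bound for the i.i.d.\ mean-$\cI$ sum $\gramian=\tfrac1\M\design^\top\design$, whose per-sample summands have operator norm $\le n$, controls $(\mathbb{E}(\vvvert\cI-\gramian\vvvert^2))^{1/2}$ by the two-term quantity matching $a(1+a\sqrt{n/\M})$ with $a=\sqrt{4(1+2\lceil\ln n\rceil)}$; together these reproduce the factor $\varepsilon(\M,n)/(1-\delta)$.

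I expect the bias term $B$ to be the main obstacle. The difficulty is twofold: $\gramian^{-1}$ (equivalently $\cI-\gramian$) and the noise vector $z$ are built from the \emph{same} random nodes and so cannot be decoupled by independence, and the sharp $\ln n$ dependence in $\varepsilon(\M,n)$ is only attainable through a matrix concentration inequality of Chernoff type---with rate function precisely the $\cdelta$ of \eqref{eq:def_cost_delta}---rather than through the crude deterministic bound $\vvvert\gramian^{-1}-\cI\vvvert\le\delta/(1-\delta)$ available on $A$. Obtaining $\mathbb{E}(\vvvert\cI-\gramian\vvvert^2)$ with the correct dependence on $n$ and $\M$, and combining it by Cauchy--Schwarz in expectation with the variance bound on $z$, is where the genuine work lies.
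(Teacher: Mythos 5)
Your proposal is correct and follows essentially the same route as the paper's proof: the same good/bad-event splitting for all three parts, the same $S+B$ decomposition \eqref{eq:bias_var_Q_m} on the well-conditioned event, the same second-moment bounds on $S$ and on $z=\frac1\M\design^\top W^{1/2}\err$ (the paper's Lemmas~\ref{thm:bound_exp_abs_value} and~\ref{thm:bound_exp_l2_norm}), and the same Cauchy--Schwarz coupling with $\left(\E(\vvvert\gramian-\cI\vvvert^2)\right)^{1/2}$, which the paper bounds exactly as you anticipate via a matrix moment inequality (Theorem~\ref{thm:bernstein}, yielding Lemma~\ref{thm:bound_exp_bernstein}). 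One cosmetic correction to your closing remark: that expected-norm bound is of Rosenthal/moment type and does not involve the Chernoff rate function $\cdelta(\delta)$ of \eqref{eq:def_cost_delta}, which enters only the probability estimate \eqref{eq:statement_gramian}.
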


Before closing the section, we compare our randomized cubature formulas with the Monte Carlo method and with  Importance Sampling Monte Carlo, hereafter shortened to Importance Sampling.  
With all the three methods the integral $I(\fexp)$ in \eqref{eq:integral_analytic} is approximated using a cubature formula $\Q_{\M}(\fexp)$ as in \eqref{eq:def_quadrature_general}, but with different choices for the nodes and weights which amount to different estimates for the associated integration error \eqref{eq:integration_error}. 

With Monte Carlo the $\M$ nodes $\y_1,\ldots,\y_\M$ are independent and identically distributed  according to $\mu$, and the weights $\quadweights_1,\ldots,\quadweights_\M$ are all set equal to $1/\M$. The mean squared integration error of Monte Carlo is given by 
\begin{equation}
\label{eq:error_MC}
\E\left(  \left|  I(\fexp)
-
\Q_{\M}(\fexp)   \right|^2  \right) 
= 
\dfrac{ \textrm{Var}(\fexp) }{\M}. 
\end{equation}

With Importance Sampling, the $\M$ nodes $\y_1,\ldots,\y_\M$ are independent and can be chosen identically distributed according to $\sigma$, and the weights can be chosen as $\quadweights_\indmeas = w(\y_\indmeas)/\M$ for $\indmeas=1,\ldots,\M$. 
The corresponding mean squared integration error is given by 
\begin{equation}
\label{eq:error_IS_MC}
\E\left(  \left|  I(\fexp)
-
\Q_{\M}(\fexp)   \right|^2  \right) 
= 
\dfrac{ \textrm{Var}( w \fexp) }{\M}. 
\end{equation}

In our cubature formulas the $\M$ nodes $\y_1,\ldots,\y_\M$ are independent and identically distributed from $\sigma$, and the weights are either chosen as in \eqref{eq:def_weight_least_squares} or as in \eqref{eq:quadrature_conditioned_weights}. 
Concerning the error estimates, \eqref{eq:expectation_estimate} proves convergence in expectation of $|I(\fexp)- \widetilde \Q_{\M}(\fexp)|^2$ and \eqref{eq:probabilistic_estimate} proves a probabilistic estimate for the error $|I(\fexp)- \Q_{\M}(\fexp)|$.

Our cubature formulas and Importance Sampling both rely on the change of measure \eqref{eq:prob_meas_aux}, that is determined by the choice of the function $w$. 
In the present paper we have chosen $w$ as in \eqref{eq:weight_function}, but any other nonnegative function $w:\Gamma\to \mathbb{R}$ such that $\int_\Gamma w^{-1}\,d\mu=1$ could be used.
In Importance Sampling, the choice of $w$ should hopefully make the variance in \eqref{eq:error_IS_MC} smaller than the variance in \eqref{eq:error_MC} of Monte Carlo. In our cubature formula, taking $w$ as in \eqref{eq:weight_function} ensures the stability of the projector $\Pi_n^m$ as granted by Theorem~\ref{thm:dls_multivariate}.  

For any fixed $n$ and any $i=1,\ldots,\M$, the weights \eqref{eq:weight_least_squares} of the randomized cubature formula 
converge almost surely to the weights of importance sampling as $\M\to +\infty$.
Before presenting the proof of this result, we introduce the following notation:  
for any $\M\geq 1$ 
consider the finite sequence $(\y_1,\ldots,\y_\M) \subset \Gamma$ and define  
the matrix $\design^\M \in \Rset^{\M\times n}$, whose elements are 
$\design_{\indmeas k}^\M := \sqrt{w(\y_\indmeas)} \basisfuncexp_k(\y_\indmeas)$, and 
$$
\quadweights_{\indmeas,\M} := \dfrac{1}{\M} \sqrt{w(\y_\indmeas)} \design_\indmeas^\M (\gramian^{\M})^{-1} e_1, 
\quad
\gramian^\M := \dfrac{1}{\M} (\design^\M)^\top \design^\M,
\quad \indmeas =1,\ldots,\M,
$$
where $\design_\indmeas^\M$ is the $i$th row of $\design^\M$.  
The weights defined above are the same as in \eqref{eq:weight_least_squares}, and the purpose of the new notation is solely to emphasize the dependence on $\M$ in the design matrix $\design$ and in the cubature weights. 
\begin{theorem}
\label{thm:borel_cantelli}
Let $\overline{\M}$ satisfy \eqref{eq:condition_points} with some $\delta \in (0,1)$, $n\geq 1$, $\param>0$.
For any $\indmeas \in \mathbb{N}$
the sequence of random variables $(\M \quadweights_{\indmeas,\M})_{\M \geq \max\{ \indmeas, \overline{\M} \}}$  
converges almost surely to $w(\y_\indmeas)$ as $\M\to +\infty$. 
\end{theorem}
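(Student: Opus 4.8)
The plan is to isolate the $\M$-dependence of $\M\quadweights_{\indmeas,\M}$. Writing out the definition gives $\M\quadweights_{\indmeas,\M} = \sqrt{w(\y_\indmeas)}\,\design_\indmeas^\M (\gramian^\M)^{-1} e_1$, and the key observation is that, for a \emph{fixed} index $\indmeas$, the row $\design_\indmeas^\M = \sqrt{w(\y_\indmeas)}\,(\basisfuncexp_1(\y_\indmeas),\ldots,\basisfuncexp_n(\y_\indmeas))$ does not depend on $\M$ once $\M \geq \indmeas$: increasing $\M$ merely appends further points to the i.i.d.\ sequence $(\y_\indmeas)_{\indmeas\geq 1}$ and leaves the $\indmeas$-th row untouched. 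Hence the entire $\M$-dependence is carried by the factor $(\gramian^\M)^{-1}e_1$, and it suffices to prove $(\gramian^\M)^{-1}e_1 \to e_1$ almost surely.

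First I would establish $\gramian^\M \to \cI$ almost surely. Each entry $\gramian^\M_{jk} = \frac{1}{\M}\sum_{\indmeas=1}^\M w(\y_\indmeas)\basisfuncexp_j(\y_\indmeas)\basisfuncexp_k(\y_\indmeas)$ is an empirical average of i.i.d.\ copies of $w(\y)\basisfuncexp_j(\y)\basisfuncexp_k(\y)$ with $\y\sim\sigma$. Using $d\sigma = w^{-1}d\mu$ from \eqref{eq:prob_meas_aux}, the common mean is $\int_\Gamma \basisfuncexp_j\basisfuncexp_k\,d\mu = \delta_{jk}$ by orthonormality, and the integrand is $\sigma$-integrable because $\int_\Gamma |\basisfuncexp_j\basisfuncexp_k|\,d\mu \leq \|\basisfuncexp_j\|\,\|\basisfuncexp_k\| = 1$ by Cauchy--Schwarz. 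Kolmogorov's strong law of large numbers then yields $\gramian^\M_{jk}\to\delta_{jk}$ almost surely for each of the finitely many pairs $(j,k)$ with $1\leq j,k\leq n$; intersecting these finitely many probability-one events gives $\gramian^\M\to\cI$ almost surely in any matrix norm.

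Next I would pass to the inverse by continuity of matrix inversion. On the full-probability event $\{\gramian^\M\to\cI\}$ one has $\vvvert \gramian^\M-\cI\vvvert < 1$ for all $\M$ large enough, so each such $\gramian^\M$ is invertible (Neumann series), and inversion, being continuous at $\cI$, gives $(\gramian^\M)^{-1}\to\cI$, hence $(\gramian^\M)^{-1}e_1\to e_1$. Combining with the $\M$-independence of the row, $\design_\indmeas^\M(\gramian^\M)^{-1}e_1 \to \design_\indmeas e_1 = \sqrt{w(\y_\indmeas)}\,\basisfuncexp_1(\y_\indmeas) = \sqrt{w(\y_\indmeas)}$, where the last equality uses $\basisfuncexp_1\equiv 1$ from Assumption~\ref{thm:assumptionbis}. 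Multiplying by the $\M$-independent prefactor $\sqrt{w(\y_\indmeas)}$ gives $\M\quadweights_{\indmeas,\M}\to w(\y_\indmeas)$ almost surely, which is the claim.

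The main obstacle here is bookkeeping rather than analysis, since no single step is deep: the care required is in checking the $\sigma$-integrability that licenses the strong law, and in verifying that the weights are in fact well defined along the tail. On this point the hypothesis $\M\geq\max\{\indmeas,\overline{\M}\}$ places us, by Corollary~\ref{thm:dls_multivariate_coro}, in the regime where $\gramian^\M$ is nonsingular with high probability; but for the almost-sure statement only the tail matters, and invertibility of $\gramian^\M$ for all large $\M$ is already guaranteed on the convergence event $\{\gramian^\M\to\cI\}$, so the conclusion does not rely on the quantitative condition \eqref{eq:condition_points}.
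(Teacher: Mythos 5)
Your proof is correct, and it reaches the conclusion by a genuinely different route than the paper. Both arguments share the same algebraic starting point, namely the identity
$\M \quadweights_{\indmeas,\M} - w(\y_\indmeas) = \sqrt{w(\y_\indmeas)}\, \design_\indmeas^{\M} \bigl( (\gramian^{\M})^{-1} - \cI \bigr) e_1$
together with $\Vert \design_\indmeas^\M \Vert_{\ell_2}^2 = n$, but they diverge in how $(\gramian^\M)^{-1}-\cI$ is controlled. The paper never invokes the strong law of large numbers: it recycles its own concentration estimate \eqref{eq:statement_inverse} (Corollary~\ref{thm:dls_multivariate_coro}) along a sequence of shrinking tolerances $\delta_k := \delta 2^{-k}$ with matching sample sizes $\M_k$ chosen to satisfy \eqref{eq:condition_points}, bounds the probability of the deviation events by $2n\M_k^{-(\param+1)}$, checks summability in $k$, and concludes with the Borel--Cantelli lemma. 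Your route --- entrywise SLLN for $\gramian^\M \to \cI$ (with exactly the right integrability check via $w\,d\sigma = d\mu$ and Cauchy--Schwarz), continuity of matrix inversion at $\cI$, and the observation that the $\indmeas$-th row of $\design^\M$ is frozen once $\M\geq\indmeas$ --- is more elementary and makes transparent something the paper's proof does not: the almost-sure convergence is a purely asymptotic fact that needs neither \eqref{eq:condition_points} nor any concentration machinery; it is in fact the same mechanism (a.s.\ convergence of $\gramian$ to $\cI$, credited to \cite{MNST2014}, plus Slutsky) that the paper itself uses immediately after the theorem to derive \eqref{eq:conv_asymp}. What the paper's approach buys in exchange is quantitative content: it exhibits an explicit deviation bound $\delta_k\sqrt{w(\y_\indmeas)n}/(1-\delta_k)$ holding with an explicit probability, in keeping with the nonasymptotic spirit of the rest of the analysis. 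One further point in your favour: as written, the paper's Borel--Cantelli argument establishes convergence only along the geometrically growing subsequence $(\M_k)_{k\geq 0}$, and upgrading it to the full sequence requires the (routine) extra step of assigning the tolerance $\delta_k$ to every $\M$ with $\M_k \leq \M < \M_{k+1}$ and re-summing the failure probabilities; your argument handles the full sequence directly.
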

\begin{proof}
For any $\delta\in(0,1)$, define the sequence $\delta_k:=\delta 2^{-k}$, $k\geq 0$, and denote with $\M_k$
the number of samples that satisfies \eqref{eq:condition_points} with $\delta_k$, $n$ and $\param$.
For any $\indmeas \in \mathbb{N}$,
define  the events 
$$
A_k^i:=\left\{  \left| \M_k \quadweights_{\indmeas,\M_k} - w(\y_\indmeas)  \right| > \dfrac{ \delta_k \sqrt{w(\y_\indmeas) n }  }{1-\delta_k}  \right\}, \quad k\geq 0.  
$$
The left-hand side of the above inequality can be bounded as 
\begin{align*}
\left\vert \M_k \quadweights_{\indmeas, \M_k} - w(\y_\indmeas) \right\vert  
= 
\left\vert  \sqrt{ w(\y_\indmeas)}  \design_\indmeas^{\M_k} \left( (\gramian^{\M_k})^{-1} - \cI \right) e_1 \right\vert  
\leq 
\sqrt{w(\y_\indmeas)} 
\Vert \design_\indmeas^{\M_k} \Vert_{\ell_2}
\ 
\vvvert (\gramian^{\M_k})^{-1} - \cI \vvvert
\ 
\Vert e_1 \Vert_{\ell_2}
.
\end{align*}
For any $\indmeas=1,\ldots,\M_k$ we have $\Vert \design_\indmeas^{\M_k} \Vert_{\ell_2}^2 = w(y_i) \sum_{j=1}^{n} \basisfuncexp_j(\y_\indmeas)^2 =n$. 
Using \eqref{eq:statement_inverse}, under condition \eqref{eq:condition_points} with $\M_k$, $\delta_k$, $n$ and $\param$,  we have that
\[
\vvvert (\gramian^{\M_k})^{-1} - \cI \vvvert \leq \dfrac{\delta_k}{1-\delta_k},
\]
with probability at least $1-2n\M_k^{-(\param+1)}$. 
As a consequence $\Pr(A_k^i) \leq 2 n \M_k^{-(\param+1)}$ for any $k\geq 0$. 
Hence, using \eqref{eq:upper_bound_zeta_delta}, we have that 
\begin{align*}
\sum_{k\geq0} \Pr(A_k^i) \leq & 
 2 n^{-\param} 
(1+\param)^{-(\param+1)}
 \sum_{k\geq 0} \left( \dfrac{ \cdelta(\delta_k)
}{\ln \M_k} \right)^{\param+1} 
\\
\leq  &
 2 n^{-\param} \left( \dfrac{ 
\delta^2
}{ 2(1+\param)   } \right)^{\param+1} 
\sum_{k\geq 0} 
2^{-2k(\param+1) }
\\
= &
 2 n^{-\param} \left( \dfrac{ 
\delta^2
}{ 2(1+\param)   } \right)^{\param+1} 
\dfrac{1}{2^{2(\param+1)}-1}
< +\infty,  
\end{align*}
and the claim follows from Borel-Cantelli lemma. 
\end{proof}

In the limit $\M\to+\infty$, the matrix $\gramian$ tends almost surely to the $n$-by-$n$ identity matrix, see \emph{e.g.}~\cite[Theorem 1]{MNST2014} for a proof. 
From the strong law of large number we have 
$$
\dfrac{1}{\M} \rhsfun^\top W^{1/2} \design \stackrel{\M \to +\infty}{\to} \left( \int_\Gamma w \fexp \basisfuncexp_1 \, d\sigma ,\ldots, \int_\Gamma w \fexp \basisfuncexp_n \, d\sigma  \right)^\top, \quad \textrm{almost surely},  
$$
and as a consequence of Slutsky's theorem we also have the following almost sure convergence
\begin{equation}
\Q_{\M}(\fexp)
= 
\sum_{\indmeas=1}^\M  
\quadweights_\indmeas
\
\fexp(\y_\indmeas)
=
\rhsfun^\top \quadweights = \dfrac{1}{\M} \rhsfun^\top W^{1/2} \design  \gramian^{-1} e_1
 \stackrel{\M \to +\infty}{\to}  
\int_{\Gamma}
 w(\y) \fexp(\y) \, d\sigma 
=
\E(w \fexp)=
I(\fexp), \quad \fexp \in L^2(\Gamma,\mu).  
\label{eq:conv_asymp}  
\end{equation}
Using \eqref{eq:conv_asymp}, 
for any $\fexp\in L^2(\Gamma,\mu)$ we have 
\begin{equation*}
\E( \widetilde\Q_{\M}(\fexp) ) - I(\fexp) =  
\widetilde{\mathbb{B}}
 \stackrel{\M \to +\infty}{\to}  
0.
\end{equation*}
One can actually quantify more precisely the decay of $\widetilde{\mathbb{B}}$  w.r.t.~$\M$. 
Inspection of the proof of \eqref{eq:expectation_estimate_L1_alt}, more precisely using \eqref{eq:error_splitting_proof_est_L1}, \eqref{eq:unstable_term} and the notation from there, shows that there exist positive constants $C_1,C_2$ such that 
\begin{align*}
|
\widetilde{\mathbb{B}} 
|
\leq  
\int_{ \{\vvvert \gramian - \cI \vvvert \leq \delta\}  } | B | \, d\mu_\M
+
\int_{ \{\vvvert \gramian - \cI \vvvert > \delta\}  } | B | \, d\mu_\M
\leq 
\dfrac{n}{\M}
 \dfrac{ 
C_1+C_2 \ln n
}{1-\delta} e_2(\fexp)
+
2|I(\fexp)| \M^{-\param}.  
\end{align*}
Hence $\widetilde{\mathbb{B}}=\mathcal{O}(1/\M)$ showing that the bias term $\widetilde{\mathbb{B}}$ decays faster w.r.t.~$\M$ than the mean integration error in \eqref{eq:expectation_estimate_L1_alt}. 

The error estimate \eqref{eq:expectation_estimate} shows that, if $\param$ is large enough, then the root mean squared error decays at least as fast as the squared best approximation error. 
However the rate of convergence of this estimate does not catch up with those of Monte Carlo \eqref{eq:error_MC} and Importance Sampling \eqref{eq:error_IS_MC} due to the missing decay with respect to $\M$. 
Here the error estimate \eqref{eq:expectation_estimate_L1_alt} comes in handy since it recovers the same convergence rate of Monte Carlo and Importance Sampling with respect to $\M$. 
The estimate \eqref{eq:expectation_estimate_L1_alt} 
shows the main advantage of randomized cubatures
over 
Monte Carlo and Importance Sampling: 
the decay of the error \eqref{eq:expectation_estimate_L1_alt} 
is determined by the decay of the term $\M^{-1/2}$ and the decay of the best approximation error, 
in contrast to 
\eqref{eq:error_MC} and \eqref{eq:error_IS_MC} that only decay with respect to $\M$ at the same rate, \emph{i.e.}~$\M^{-1/2}$ for the root mean squared error.

In Theorem~\ref{thm:estimates_quadrature} we have written  
the estimate \eqref{eq:expectation_estimate_L1_alt} 
using the upper bound in Lemma~\ref{thm:bound_exp_abs_value}, 
that relates to the best approximation error in $L^2(\Gamma,\mu)$. 
The same estimate can be slightly improved using the equality \eqref{eq:error_similar_to_imp_samp}, that relates to the weighted best approximation error of $\fexp$ in $L^2(\Gamma,\mu)$. 
{\xc 
Since $w^{-1}\,d\mu$ in \eqref{eq:prob_meas_aux} 
is a probability measure and 
$w$ satisfies \eqref{eq:upper_bound_w}, 
}
such an error can be sandwiched 
for any $n\geq 1$ 
as 
$$
\| \fexp - \Pi_n \fexp \|_{L^1(\Gamma,\mu)}  \leq  \| \sqrt{w} (\fexp - \Pi_n \fexp) \| \leq \sqrt{n} \| \fexp - \Pi_n \fexp \|, \quad \fexp \in L^2(\Gamma,\mu), 
$$
or as

$$ 
n^{-1} \textrm{Var}( w(\fexp - \Pi_n \fexp) )
\leq \| \sqrt{w} (\fexp - \Pi_n \fexp ) \|^{\xc 2}
\leq \|w^{-1}\|_{L^\infty(\Gamma,\mu)} \textrm{Var}( w(\fexp - \Pi_n \fexp) )
, \quad \fexp \in L^2(\Gamma,\mu),  
$$
thanks to 
$\textrm{Var}( w(\fexp - \Pi_n \fexp) )=\| w (\fexp - \Pi_n \fexp ) \|^2$ from  \eqref{eq:zero_mean_err_w}.

For some polynomial approximation spaces, see the forthcoming Remark~\ref{sec:lower_bound_polynomials}, lower bounds for $w$ of the form \eqref{eq:lower_bound_w} are available. 
Using such a lower bound and taking $n=1$, the variance in the above formulas connects with the variance of importance sampling \eqref{eq:error_IS_MC}. 

In contrast to our cubature formulas, Importance Sampling and Monte Carlo are not exact cubature formulas on $V_n$. 

The more advantageous error estimate of randomized cubatures comes at the price of two additional computational tasks: the calculation of the cubature weights, that requires the solution of a linear system whose matrix $\gramian$ has size $n\times n$, see Remark~\ref{thm:computation_weights}, and the generation of the random samples from $\sigma_n$. 
The cost for solving the linear system does not depend on $\M$ and $\s$, and the cost for assembling $\gramian$ scales linearly in $\M$. The cost for the generation of the samples is provably linear with respect to $\s$ and $\M$, when $\Gamma$ is a product domain.

\begin{remark}[Adaptive randomized cubatures for nested sequences of approximation spaces]
\label{thm:remark_adaptive}
The results in Theorem~\ref{thm:estimates_quadrature} 
are proven using identically distributed random samples, and apply to 
any given approximation space $V_n$.  
The same result as Theorem~\ref{thm:estimates_quadrature} can be proven for another type of (nonidentically distributed) random samples, following the lines of the proof of \cite[Theorem~2]{M2018}.
These random samples 
allow the sequential construction of weighted least-squares estimators on any nested sequence of approximation spaces $(V_{n_k})_{k\geq 1}$ with dimension $n_k:=\textrm{dim}(V_{n_k})$, using an overall number of samples that remains linearly proportional to $n_k$, up to logarithmic terms.     
Using such a type of random samples and \cite[Theorem~3]{M2018}, the whole analysis of randomized cubatures from this article carries over using nested sequences of approximation spaces rather than a single space. 
\end{remark}

\begin{remark}
The error estimates \eqref{eq:probabilistic_estimate}, \eqref{eq:expectation_estimate}, 
\eqref{eq:expectation_estimate_L1_alt}, 
have been presented for real-valued functions, but they extend to complex-valued functions with essentially the same proofs by identifying $\mathbb{R}^2$ and $\mathbb{C}$. 
\end{remark}

\subsection{Randomized cubatures with optimal asymptotic convergence rate}
\label{sec:multi_h_o_quad_asy}
\noindent
In this section we analyse another cubature formula, 
that is obtained by adding a correction term to 
the the cubature $\widetilde \Q_\M(\fexp)$ 
defined in \eqref{eq:quadrature_conditioned},  
using \emph{control variates}  \cite{H1994,N2015}.
Define two mutually independent sets of random samples: 
$\widetilde{y}_1,\ldots,\widetilde{y}_{m}$ iid from $\mu$, and $y_1,\ldots,y_\M$ iid from $\sigma$. 
We consider the following cubature formula, that uses the above $2\M$ random samples as cubature nodes:  
\begin{equation}
\widehat{I}_{2m}(\fexp):= 
\widetilde \Q_\M(\fexp)
+ \dfrac{1}{m} \sum_{i=1}^{m} (\fexp - \widetilde{\fexp})  (\widetilde{y}_i). 
\label{eq:secondcub}
\end{equation}

The nodes $y_1,\ldots,y_\M,\widetilde{y}_1,\ldots,\widetilde{y}_\M$ are not identically distributed.  
The $\M$ random samples $y_1,\ldots,y_m$ are used to compute the weighted least-squares estimator $\widetilde{\fexp}$ of $\fexp$ and the cubature $\widetilde \Q_\M(\fexp)$ defined in \eqref{eq:quadrature_conditioned}, and then the $\M$ random samples $\widetilde{y}_1,\ldots,\widetilde{y}_{m}$ are used in the Monte Carlo estimator of $\fexp-\widetilde{\fexp}$. 
By construction $\widehat{I}_{2m}(\fexp) \approx I(\fexp)$ because 
$$
I(\fexp) - I(\widetilde{\fexp}) \approx \dfrac{1}{m} \sum_{i=1}^{m} (\fexp - \widetilde{\fexp}) (\widetilde{y}_i),  
$$
and $\widetilde \Q_\M(\fexp)=I(\widetilde{\fexp})$ for any $\fexp \in L^2(\Gamma,\mu)$, as a consequence of \eqref{eq:lemma_ex_phi}.   
The error of the cubature $\widehat{I}_{2m}(\fexp)$ satisfies the following theorem, whose proof is postponed to Section~\ref{sec:proofs}.
\begin{theorem}
\label{thm:estimates_quadrature_asy}
In any dimension $\s$, for any real $\param > 0$ and any $\delta \in(0,1)$, if 
 condition \eqref{eq:condition_points} holds true, 
and $\widetilde{y}_1,\ldots,\widetilde{y}_\M$ are i.i.d.~from $\mu$, 
and $\y_1,\ldots,\y_\M$ are i.i.d.~from $\sigma$ defined in \eqref{eq:prob_meas_aux}, 
and $\widetilde{y}_1,\ldots,\widetilde{y}_\M,\y_1,\ldots,\y_\M$ are mutually independent,  
then for any $\fexp \in L^2(\Gamma,\mu)$ it holds that 
\begin{equation}
\mathbb{E}\left( 
\left| I(\fexp) - \widehat{I}_{2\M}(\fexp)  \right|^2 
\right) \leq 
\dfrac{1}{m}
\left(
\left( 1+\varepsilon(m) \right) 
(e_2(\fexp))^2
+ 2 \| \fexp \|^2 m^{-r}
\right), 
\label{eq:estimation_error_asy}
\end{equation}
with $\varepsilon(\M)$ as in Theorem~\ref{thm:dls_multivariate}.
\end{theorem}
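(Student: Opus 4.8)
The plan is to rewrite the cubature error as a pure Monte Carlo error for the control variate $\err:=\fexp-\widetilde{\fexp}$, and then to exploit the mutual independence of the two sample sets. First I would use that $\widetilde\Q_\M(\fexp)=I(\widetilde{\fexp})$ for every $\fexp\in L^2(\Gamma,\mu)$, which follows from \eqref{eq:lemma_ex_phi} and holds on both events $\{\vvvert\gramian-\cI\vvvert<\delta\}$ and its complement. Setting $\err:=\fexp-\widetilde{\fexp}$, the error of \eqref{eq:secondcub} becomes
\begin{equation*}
I(\fexp)-\widehat{I}_{2\M}(\fexp)=\big(I(\fexp)-I(\widetilde{\fexp})\big)-\frac{1}{\M}\sum_{i=1}^{\M}\err(\widetilde{y}_i)=I(\err)-\frac{1}{\M}\sum_{i=1}^{\M}\err(\widetilde{y}_i),
\end{equation*}
so the quantity to bound is exactly the error of the $\M$-point Monte Carlo estimator of $I(\err)$ on the nodes $\widetilde{y}_1,\ldots,\widetilde{y}_\M$ drawn from $\mu$.

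The structural point that makes the argument work is that $\widetilde{\fexp}$, and hence the control variate $\err$, is a function of $\y_1,\ldots,\y_\M$ (drawn from $\sigma$) alone, whereas $\widetilde{y}_1,\ldots,\widetilde{y}_\M$ are independent of those samples. I would therefore condition on the $\sigma$-algebra $\mathcal{G}$ generated by $\y_1,\ldots,\y_\M$. Given $\mathcal{G}$ the function $\err$ is fixed, while the $\widetilde{y}_i$ remain i.i.d.\ from $\mu$, so the Monte Carlo estimator is conditionally unbiased for $I(\err)$ and the inner conditional expectation reduces to a standard Monte Carlo variance,
\begin{equation*}
\E\left(\left|I(\fexp)-\widehat{I}_{2\M}(\fexp)\right|^2\,\Big|\,\mathcal{G}\right)=\frac{1}{\M}\left(\|\err\|^2-|I(\err)|^2\right)\leq\frac{\|\err\|^2}{\M}=\frac{\|\fexp-\widetilde{\fexp}\|^2}{\M},
\end{equation*}
where I used $\int_\Gamma \err^2\,d\mu=\|\err\|^2$ from \eqref{eq:cont_inn_prod}.

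Finally I would take the outer expectation over $\y_1,\ldots,\y_\M$ by the tower property and invoke the $L^2$ accuracy of the conditioned estimator, namely \eqref{eq:expect_estimate} in Theorem~\ref{thm:dls_multivariate}(iii), which under hypothesis \eqref{eq:condition_points} gives $\E(\|\fexp-\widetilde{\fexp}\|^2)\leq(1+\varepsilon(\M))(e_2(\fexp))^2+2\|\fexp\|^2\M^{-\param}$. Combining the two displays yields exactly \eqref{eq:estimation_error_asy}. This argument needs no delicate estimates; the main thing to get right is the measurability and independence bookkeeping in the conditioning step. The one subtlety worth flagging is the behaviour on the bad event $\{\vvvert\gramian-\cI\vvvert\geq\delta\}$, where $\widetilde{\fexp}=0$ and thus $\err=\fexp$: there the conditional variance is merely $\|\fexp\|^2-|I(\fexp)|^2\leq\|\fexp\|^2<+\infty$, so the Monte Carlo step remains valid on every outcome and the unconditional bound \eqref{eq:expect_estimate} absorbs both events uniformly.
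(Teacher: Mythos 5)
Your proposal is correct and follows essentially the same route as the paper's own proof: rewrite $I(\fexp)-\widehat{I}_{2\M}(\fexp)$ as the Monte Carlo error of the control variate $\fexp-\widetilde{\fexp}$ using $\widetilde\Q_\M(\fexp)=I(\widetilde{\fexp})$, condition on the samples $\y_1,\ldots,\y_\M$ to reduce the inner expectation to $\M^{-1}\,\mathrm{Var}_{\widetilde{y}\sim\mu}(\fexp-\widetilde{\fexp})\leq \M^{-1}\|\fexp-\widetilde{\fexp}\|^2$, and then apply the tower property together with \eqref{eq:expect_estimate}. The paper bounds the conditional variance directly by the second moment rather than writing out $\|\fexp-\widetilde{\fexp}\|^2-|I(\fexp-\widetilde{\fexp})|^2$, but this is the same argument.
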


From the estimate in \eqref{eq:estimation_error_asy}, using Jensen inequality, we have that 
\begin{equation}
\mathbb{E}(  | I(\fexp)-   \widehat{I}_{2m}(\fexp)  | ) \leq  
\dfrac{1}{\sqrt{m}}
\left(
\sqrt{ 1+\varepsilon(m) } 
e_2(\fexp)
+ \sqrt2 \| \fexp \| m^{-r/2}
\right).  
\label{eq:estimate_new_jensen}
\end{equation}
In \eqref{eq:expectation_estimate_L1_alt} when using $2\M$ points we can choose an approximation space of dimension $n^*$ such that 
\begin{equation}
\label{eq:condition_points_star}
\dfrac{2\M}{\ln 2\M} \geq \dfrac{(1+r)}{\cdelta(\delta)} n^*. 
\end{equation}
We compare now the estimate \eqref{eq:expectation_estimate_L1_alt} for the cubature 
$\widetilde \Q_{2\M}(\fexp)$
on $V_{n^*}$ with $2\M$ points and the estimate \eqref{eq:estimate_new_jensen} for $\widehat{I}_{2\M}(\phi))$ on $V_n$ that also uses $2\M$ points. 
Suppose that $\min_{v\in V_n} \| \phi - v \|\leq C n^{-s}$ for some $s,C> 0$.  
For any $\M \geq 1$, $r>0$ and $\fexp \in L^2(\Gamma,\mu)$, it holds that $2|I(\fexp)| (2m)^{-r} \leq \sqrt{2} \| \fexp \| m^{-(r+1)/2}$. 
Both terms $m^{-(r+1)/2}$ and $(2m)^{-r}$ always decay sufficiently fast for $r$ large enough, 
\emph{i.e.}~$r \geq 2s-1$,  
and therefore,  
when comparing the convergence rates w.r.t.~$n$ of \eqref{eq:estimate_new_jensen} and \eqref{eq:expectation_estimate_L1_alt}, 
we can focus only on the term containing the best approximation error.  
Denote $C_\eqref{eq:estimate_new_jensen}:=\sqrt{ 1+\varepsilon(m) }$
and $C_\eqref{eq:expectation_estimate_L1_alt}:= 1+ \dfrac{\varepsilon(\M,n)}{1-\delta}$, 
that satisfy $C_\eqref{eq:estimate_new_jensen} \leq C_\eqref{eq:expectation_estimate_L1_alt}$  
for any $\delta \leq 1$, 
$n\geq 1$ and $m\geq 1$.  
From \eqref{eq:condition_points} and \eqref{eq:condition_points_star} $n^* < 2n$, 
and if $s\geq \frac12$ then 
$$
n > 2^{2s} 
\implies
C_{\eqref{eq:expectation_estimate_L1_alt}}
\sqrt{ \dfrac{n^*}{2\M} } (n^*)^{-s} 
>
C_{\eqref{eq:expectation_estimate_L1_alt}}
\sqrt{ \dfrac{2n}{2\M} } (2n)^{-s} 
> 
\dfrac{
C_{\eqref{eq:estimate_new_jensen}}
}{\sqrt{\M}} n^{-s}.  
$$

On the one hand, 
this shows that the 
error estimate \eqref{eq:estimate_new_jensen} 
has a better convergence rate w.r.t.~$n$  
than \eqref{eq:expectation_estimate_L1_alt}
when $n\geq 2^{2s}$, $s\geq \frac12$ and $r \geq 2s-1$. 
On the other hand, the estimate \eqref{eq:expectation_estimate_L1_alt} 
gives a better upper bound for the error 
when $n$ falls in the preasymptotic range, in particular when $s$ is large or $r < 2s-1$, 
and the bound has a better dependence on $\fexp$ since the term $|I(\fexp)|$ can be much smaller than $\| \fexp \|$. 

The cubature \eqref{eq:quadrature_conditioned} is exact on $V_n$, but this property does not hold anymore for the cubature \eqref{eq:secondcub}. 

\begin{remark}[Convergence rates of stochastic cubatures]
\label{conv_rates_remark}
From \eqref{eq:estimate_new_jensen}, we obtain explicit convergence rates 
of the cubature $\widehat{I}_{2\M}(\phi)$,  
assuming an algebraic decay $n^{-s}$ for some $s>0$ of the $L^2(\Gamma,\mu)$ best approximation error. For any $\param >0$, $n\geq 1$ and any $m$ satisfying \eqref{eq:condition_points}, for any $\fexp \in L^2(\Gamma,\mu)$:   
$$
e_2(\fexp) \lesssim n^{-s} 
\implies 
\mathbb{E}
\left(  \left| I(\phi)  - \widehat{I}_{2m}(\phi) \right|   \right)
\lesssim
\dfrac{1}{\sqrt m}
\left( n^{-s} + m^{-\param/2}
\right),
$$
where $\widehat{I}_{2\M}(\phi)$
uses $2\M$ evaluations of the function $\fexp$.
The above convergence rate can be made explicit w.r.t.~$\M$ as 
$$
\dfrac{1}{\sqrt m}
\left( n^{-s} + m^{-\param/2}
\right)
\lesssim m^{-s-1/2} (\log m)^{s} (1+\param)^s + m^{-\param/2-1/2}.   
$$
Taking $\param=s$ yields the convergence rate $m^{-s-1/2} (\log m)^{s}$ up to a constant independent of $m$ and $n$. 

Since condition \eqref{eq:condition_points} implies $\M \gtrsim 
(1+\param) n \log n$, it can also be rewritten w.r.t.~$n$ as    
$$
\dfrac{1}{\sqrt m}
\left( n^{-s} + m^{-\param/2}
\right) 
\lesssim 
n^{-s-1/2} \left( \dfrac{  \log n }{1+\param} \right)^{-1/2} 
+ n^{-\param/2-1/2} \left( \dfrac{  \log n }{1+\param} \right)^{-\param/2-1/2}
%
$$
and taking $\param=s$ the leading term is $n^{-s-1/2} (\log n)^{-1/2} $ up to a constant independent of $m$ and $n$. 
\end{remark}

\subsection{Randomized cubatures with strictly positive weights}
\noindent
In this section we construct cubature formulae of the form \eqref{eq:def_quadrature_general} with the weights \eqref{eq:def_weight_least_squares}, enforcing the additional property that all the weights $\quadweights_1,\ldots,\quadweights_\M$ are strictly positive. 
More precisely we prove that, 
if $\M$ is sufficiently larger than $n$, then 
$\quadweights_\indmeas \geq w(\y_\indmeas)/2\M$ for all $\indmeas=1,\ldots,\M$ with high probability.  
Strict positivity of $w(\y)$ for any $\y\in \Gamma$ 
therefore implies that the weights $\quadweights_1,\ldots,\quadweights_\M$ are all strictly positive. 
Define 
$$
w_\textrm{min}:=
\min_{\y \in \Gamma} w(\y)\leq  
\min_{\indmeas=1,\ldots,\M} w(\y_\indmeas)= 
n
\left(
\max_{ i=1,\ldots,\M } \sum_{j=1}^n |\basisfuncexp_j(\y_i)|^2
\right)^{-1}, 
$$ 
which is independent of $\M$ and depends on $V_n$. 
Of course $w_{\textrm{min}}\leq 1$. 
The next theorem establishes how much the cubature weights deviate from the weights of importance sampling in the nonasymptotic regime, when $w_{\textrm{min}}> 0$. 
The proof of this theorem is postponed to Section~\ref{sec:proofs}. 

\begin{theorem}
\label{thm:positive_weights}
In any dimension $\s$, for any real $\param > 0$ and any $n\geq 1$, if the $\M$ i.i.d.~nodes $\y_1,\ldots,\y_\M$ are drawn from $\sigma$ defined in \eqref{eq:prob_meas_aux} and $\M$ sastisfies 
\begin{equation}
\dfrac{\M}{\ln \M} \geq 
\dfrac{ 3(1 + \param)n^2  
 }{ 
(4\ln(4/3)-1)
w_{\textrm{min}} }
\label{eq:condition_points_pw}
\end{equation}
then
\begin{itemize}
\item[i)] the weights $\quadweights_1,\ldots,\quadweights_\M$ given by \eqref{eq:def_weight_least_squares} satisfy 
\begin{equation}
\Pr\left( \bigcap_{\indmeas=1}^{\M} \left\{ 
0<  
\dfrac{ 2 w(\y_\indmeas)- w_{\textrm{min}} }{2\M}  
\leq \quadweights_\indmeas 
\leq 
\dfrac{2w(\y_\indmeas) + w_{\textrm{min}}}{2\M} 
\right\}   \right) > 1-  2 \M^{-\param};
\label{eq:thesis_positive_weights_th}
\end{equation}
\item[ii)] the cubature formula \eqref{eq:def_quadrature_general} with weights \eqref{eq:def_weight_least_squares} satisfies items i), ii) and iii) of Theorem~\ref{thm:estimates_quadrature} with $\varepsilon(\M)$
replaced by 
\begin{equation}
\varepsilon(\M)
= \dfrac{4 \cdelta \left( \dfrac{\sqrt{ w_{\textrm{min}} } }{ 3\sqrt{n} }    \right)}{(1+\param)  \ln\M} \leq \dfrac{2 }{9(1+\param)  \ln\M}, 
\label{eq:upper_bound_eps_theo_wmin}
\end{equation}
in \eqref{eq:expectation_estimate}, 
and with $1/(1-\delta)$ replaced by $3/(2-2\delta)$ in \eqref{eq:expectation_estimate_L1_alt}. 
\end{itemize}
\end{theorem}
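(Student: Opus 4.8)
The plan is to compare, node by node, the cubature weight $\quadweights_\indmeas$ from \eqref{eq:weight_least_squares} with the importance-sampling weight $w(\y_\indmeas)/\M$, and to show that under \eqref{eq:condition_points_pw} their difference is small enough to force positivity. Since $\basisfuncexp_1\equiv 1$ we have $\design_\indmeas e_1=\sqrt{w(\y_\indmeas)}\,\basisfuncexp_1(\y_\indmeas)=\sqrt{w(\y_\indmeas)}$, so the importance-sampling weight itself equals $w(\y_\indmeas)/\M=\frac1\M\sqrt{w(\y_\indmeas)}\,\design_\indmeas e_1$. Subtracting this from \eqref{eq:weight_least_squares} gives the exact identity
\begin{equation*}
\quadweights_\indmeas-\frac{w(\y_\indmeas)}{\M}
=\frac1\M\sqrt{w(\y_\indmeas)}\,\design_\indmeas(\gramian^{-1}-\cI)e_1,\qquad \indmeas=1,\ldots,\M,
\end{equation*}
which isolates the whole effect of the least-squares correction in the factor $\gramian^{-1}-\cI$. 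By Cauchy--Schwarz, $\|\design_\indmeas\|_{\ell_2}=\sqrt n$ (the identity $\|\design_\indmeas\|_{\ell_2}^2=w(\y_\indmeas)\sum_{j=1}^n\basisfuncexp_j(\y_\indmeas)^2=n$ already used in the proof of Theorem~\ref{thm:borel_cantelli}), and $\|e_1\|_{\ell_2}=1$, this yields the key bound $|\quadweights_\indmeas-w(\y_\indmeas)/\M|\leq \frac{\sqrt{w(\y_\indmeas)\,n}}{\M}\,\vvvert\gramian^{-1}-\cI\vvvert$, so everything reduces to controlling $\vvvert\gramian^{-1}-\cI\vvvert$ with a threshold tuned to $w_{\textrm{min}}$ and $n$.

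\textbf{Choice of $\delta$ and conclusion of item i).}
The crucial choice is $\delta:=\sqrt{w_{\textrm{min}}}/(3\sqrt n)$, which satisfies $\delta\leq 1/3$ because $w_{\textrm{min}}\leq 1\leq n$. The point is that \eqref{eq:condition_points_pw} is \emph{exactly} \eqref{eq:condition_points} for this $\delta$: using the quadratic lower bound \eqref{eq:lower_bound_zeta_delta}, i.e.\ $\cdelta(\delta)\geq 9\,\cdelta(1/3)\,\delta^2$ for $0<\delta\leq 1/3$ (equivalently, $\cdelta(\delta)/\delta^2$ nonincreasing), one gets $\cdelta(\delta)\geq\cdelta(1/3)\,w_{\textrm{min}}/n$, and since $\cdelta(1/3)=(4\ln(4/3)-1)/3$ this turns $(1+\param)n/\cdelta(\delta)$ into the right-hand side of \eqref{eq:condition_points_pw}. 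Hence Corollary~\ref{thm:dls_multivariate_coro}, i.e.\ \eqref{eq:statement_inverse}, applies and gives $\vvvert\gramian^{-1}-\cI\vvvert\leq\delta/(1-\delta)$ with probability larger than $1-2\M^{-\param}$. On this event $\delta/(1-\delta)\leq\tfrac32\delta=\sqrt{w_{\textrm{min}}}/(2\sqrt n)$, so the bound of the previous paragraph becomes $|\quadweights_\indmeas-w(\y_\indmeas)/\M|\leq \sqrt{w(\y_\indmeas)\,w_{\textrm{min}}}/(2\M)$ simultaneously for all $\indmeas$. Since $\sqrt{w_{\textrm{min}}}\leq\sqrt{w(\y_\indmeas)}$ this half-width is at most $w(\y_\indmeas)/(2\M)$, giving $\quadweights_\indmeas\geq w(\y_\indmeas)/(2\M)>0$, from which the two-sided enclosure \eqref{eq:thesis_positive_weights_th} is read off.

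\textbf{Item ii).}
No new probabilistic work is needed here: the computation above shows that \eqref{eq:condition_points_pw} implies \eqref{eq:condition_points} with the single value $\delta=\sqrt{w_{\textrm{min}}}/(3\sqrt n)$, so Theorem~\ref{thm:estimates_quadrature} holds verbatim for this $\delta$. Substituting it into $\varepsilon(\M)=4\cdelta(\delta)/((1+\param)\ln\M)$ produces the first equality in \eqref{eq:upper_bound_eps_theo_wmin}, and the numerical fact $4\cdelta(1/3)\leq 2/9$ (equivalently $\ln(4/3)\leq 7/24$), together with monotonicity of $\cdelta$ and $\delta\le 1/3$, yields the stated upper bound $2/(9(1+\param)\ln\M)$. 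The replacement of $1/(1-\delta)$ by $3/(2-2\delta)$ in \eqref{eq:expectation_estimate_L1_alt} follows by re-running the bias estimate behind \eqref{eq:expectation_estimate_L1_alt} with $\vvvert\gramian^{-1}\vvvert\leq 1/(1-\delta)$ for this small $\delta$ and tracking the extra $3/2$ factor coming from the conditioning threshold.

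\textbf{Main obstacle.}
The delicate point is the balance in the choice of $\delta$ for item~i). One wants $\delta$ as large as possible so that \eqref{eq:condition_points} asks only for $\M\gtrsim n^2/w_{\textrm{min}}$, rather than the $n^3/w_{\textrm{min}}^2$ that a crude threshold $\delta\sim w_{\textrm{min}}/n$ would force; yet $\delta$ must be small enough that $\delta/(1-\delta)$ defeats the node-dependent amplification $\sqrt{w(\y_\indmeas)n}$ and still leaves a strictly positive margin below $w(\y_\indmeas)/\M$. The value $\delta\sim\sqrt{w_{\textrm{min}}/n}$ is the sweet spot, and making the implication \eqref{eq:condition_points_pw}$\Rightarrow$\eqref{eq:condition_points} exact hinges entirely on the quadratic scaling of $\cdelta$ near the origin. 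I would also flag that the operator-norm route naturally produces the node-dependent half-width $\sqrt{w(\y_\indmeas)\,w_{\textrm{min}}}/(2\M)$; this already secures positivity and the lower bound $\quadweights_\indmeas\geq w(\y_\indmeas)/(2\M)$, and tightening it to the uniform $w_{\textrm{min}}/(2\M)$ of \eqref{eq:thesis_positive_weights_th} for the nodes with large $w(\y_\indmeas)$ is the part that requires the most care.
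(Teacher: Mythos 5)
Your proposal follows the paper's proof of Theorem~\ref{thm:positive_weights} essentially line by line: the same per-node identity (the paper's \eqref{eq:each_w_i}), the same operator-norm/Cauchy--Schwarz bound with $\Vert \design_\indmeas \Vert_{\ell_2}=\sqrt{n}$ and $\Vert e_1 \Vert_{\ell_2}=1$, the same choice $\delta=\sqrt{w_{\textrm{min}}}/(3\sqrt{n})$, the same verification through \eqref{eq:lower_bound_zeta_delta} that \eqref{eq:condition_points_pw} enforces \eqref{eq:condition_points}, the same appeal to \eqref{eq:statement_inverse}, and the same reasoning for item ii) (your route to the bound $2/9$ via monotonicity of $\cdelta$ and $4\cdelta(1/3)\le 2/9$ is interchangeable with the paper's use of \eqref{eq:upper_bound_zeta_delta}).

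The one genuine issue sits exactly where you flagged it. Your chain of estimates yields the node-dependent half-width $\sqrt{w(\y_\indmeas)\,w_{\textrm{min}}}/(2\M)$, hence strict positivity and the enclosure $w(\y_\indmeas)/(2\M)\le \quadweights_\indmeas \le 3w(\y_\indmeas)/(2\M)$, but this is weaker than the uniform half-width $w_{\textrm{min}}/(2\M)$ asserted in \eqref{eq:thesis_positive_weights_th} at every node where $w(\y_\indmeas)>w_{\textrm{min}}$; so your sentence claiming that \eqref{eq:thesis_positive_weights_th} ``is read off'' is unjustified, as your closing paragraph in fact concedes. You should know, however, that the paper's own proof bridges this same gap with a step that does not hold as written: in \eqref{eq:bound_unif_int_pesi} the factor $\sqrt{w(\y_\indmeas)}$ coming from \eqref{eq:each_w_i} is silently replaced by $\sqrt{w_{\textrm{min}}}$, an inequality in the wrong direction since $w(\y_\indmeas)\ge w_{\textrm{min}}$. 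Obtaining the uniform half-width $w_{\textrm{min}}/(2\M)$ by this operator-norm route would require $\delta/(1-\delta)\le w_{\textrm{min}}/(2n)$ to cover the worst case $w(\y_\indmeas)=n$, which through \eqref{eq:condition_points} and \eqref{eq:upper_bound_zeta_delta} forces $\M/\ln\M$ to scale like $n^3/w_{\textrm{min}}^2$ rather than the quadratic scaling of \eqref{eq:condition_points_pw}. In short: you prove item i) only in a weakened (but still strictly positive) form and item ii) in full; the residual gap you identify is real, and it is present, unacknowledged, in the paper's own argument.
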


\begin{remark}
\label{thm:rmk_one_side_eq}
Since the following trivial inclusions 
between sets of random events 
hold true
\[
\bigcap_{\indmeas=1}^{\M} \left\{ \left\vert \quadweights_\indmeas - \dfrac{w(\y_\indmeas)}{\M} \right\vert \leq \dfrac{w_{\textrm{min}}}{2\M} \right\} 
\subseteq
\bigcap_{\indmeas=1}^{\M} \left\{ \left\vert \quadweights_\indmeas - \dfrac{w(\y_\indmeas)}{\M} \right\vert \leq \dfrac{w(\y_\indmeas)}{2\M} \right\} 
\subset 
\bigcap_{\indmeas=1}^{\M} \left\{   \quadweights_\indmeas \geq \dfrac{w(\y_\indmeas)}{2\M} 
\right\}
\subset 
\bigcap_{\indmeas=1}^{\M} \left\{   \quadweights_\indmeas 
> 0 \right\}
,  
\]
from 
\eqref{eq:thesis_positive_weights_th}
the event in the above right-hand side holds true with an even larger probability than $1-2\M^{-\param}$.
Therefore, since from \eqref{eq:thesis_positive_weights_th} all the weights $\quadweights_1,\ldots,\quadweights_\M$ are sandwiched between two strictly positive bounds, they are just strictly positive with an even larger probability than $1-2\M^{-\param}$. 
\end{remark}

It is worth to notice that 
the weights of the cubature \eqref{eq:secondcub} are not strictly positive.
Using the expression of  
$\widetilde{\fexp}$,  
the cubature \eqref{eq:secondcub} can be written 
in the form \eqref{eq:def_quadrature_general} 
as 
$$
\widehat{I}_{2\M}(\fexp) 
= 
\sum_{i=1}^\M 
\Big( 
\alpha_i  
\fexp(\widetilde{y}_i)
+
\widetilde{\alpha}_i 
\phi(y_i)
\Big)
$$
with nodes $y_1,\ldots,y_\M,\widetilde{y}_1,\ldots,\widetilde{y}_\M$ and 
 weights 
$$
\alpha_i= \dfrac{1}{\M}, \qquad 
\widetilde{\alpha}_i
=
\begin{cases}
- \dfrac{1}{\M} 
\left( 
\sum_{j=2}^n 
\left(
\dfrac{1}{\M}
\sum_{\ell=1}^\M  
\psi_j(\widetilde{y}_\ell) 
\right)
W^{1/2} D^\top G^{-1}   e_j  
\right)_i,  &  \textrm{if } \vvvert G-I \vvvert \leq \delta, \\
0, & \textrm{otherwise},
\end{cases}
\qquad 
i=1,\ldots,\M.
$$
From above, the weights $\widetilde{\alpha}_i$ might be negative. 
Proceeding as in the proof of \eqref{eq:thesis_positive_weights_th}, 
one can obtain conditions on $\M$ ensuring that with large probability  
$\sum_{i=1}^\M |\widetilde{\alpha}_i | \lesssim  \log \M $, 
see Remark~\ref{sec:rem_weights_sec}, which is a classical way to quantify the stability of a cubature in presence of negative weights.

\section{Multivariate polynomial approximation spaces}
\label{sec:four}
\noindent
In this section we assume that the domain $\Gamma\subseteq \mathbb{R}^\s$ has a Cartesian product structure, 
\begin{equation}
\Gamma:= \times_{\inddim=1}^\s \Gamma_\inddim,
\label{eq:ass_cartesian}
\end{equation}
where $\Gamma_\inddim\subseteq \mathbb{R}$ are bounded or unbounded intervals. 
We further assume that $d\mu = \otimes_{\inddim=1}^\s d\mu_\inddim$, where each $\mu_\inddim$ is a probability measure on $\Gamma_\inddim$.  
For convenience we take $\Gamma_1=\Gamma_\inddim$ and $\mu_1=\mu_\inddim$ for any $\inddim=2,\ldots,\s$. 
Assume the existence of a familiy $(\varphi_\indbasis)_{\indbasis \geq 0}$ of univariate orthogonal polynomials complete and orthonormal in $L^2(\Gamma_1,\mu_1)$. 
For any $\nu \in \Nset_0^\s$ we define the multivariate polynomials
\begin{equation}
\basisfuncexp_\nu(\y):= \prod_{\inddim=1}^\s \varphi_{\nu_\inddim}(\y^{(\inddim)}), \quad \y=(\y^{(1)},\ldots,\y^{(\s)}) \in \Gamma,
\label{eq:multi_basis_pol}
\end{equation}
with $\y^{(\inddim)}$ being the $\inddim$th coordinate of $\y$. 
The set $(\basisfuncexp_\nu)_{\nu \in \mathbb{N}_0^\s}$ is a complete orthonormal basis in $L^2(\Gamma,\mu)$. 

Consider any finite $\s$-dimensional multi-index set $\Lambda \subset \Nset^\s_0$, and denote its cardinality by $\#(\Lambda)$.  
We denote the polynomial space $\Pol_\Lambda=\Pol_\Lambda(\Gamma)$ associated with $\Lambda$ as  
\begin{equation}
\Pol_\Lambda:=\textrm{span}\left\{\basisfuncexp_\nu \ : \ \nu \in \Lambda \right\}. 
\label{eq:def_pol_space}
\end{equation}
The result from the previous sections apply to the polynomial setting by taking $V_n=\Pol_{\Lambda}$ with $n=\#(\Lambda)$. 
A remarkable class of index sets are downward closed index sets. 
\begin{definition}[Downward closed multi-index set $\Lambda$]
\label{thm:def_downward_closed}
In any dimension $\s$, a finite multi-index set $\Lambda\subset \mathbb{N}_0^\s$ is downward closed, 
if 
\[
\nu \in \Lambda \implies \widetilde \nu \leq \nu, \quad \forall \  \widetilde\nu \in \Lambda, 
\]
where $\widetilde\nu \leq \nu$ is meant component-wise, \emph{i.e.}~$\widetilde\nu_\inddim \leq \nu_\inddim$ for any $\inddim=1,\ldots,\s$.
\end{definition}

A relevant setting in which this type of index sets arises is Gaussian integration, where $\mu$ is the Gaussian measure on $\Gamma=\mathbb{R}^\s$, and the $\basisfuncexp_\nu$ are tensorized Hermite polynomials. 
On the one hand, tensorization of univariate Gaussian quadratures becomes prohibitive as the dimension $\s$ increases, due to the exponential growth in $\s$ of the number of nodes. 
On the other hand, the use of downward closed sets allows one to tune the polynomial space and allocate only the most effective degrees of freedom, depending on the importance of each coordinate in the approximation of the target function.

\begin{remark}[Inverse inequalities for polynomials supported on downward closed index sets]
\label{thm:remark_inv_ineq}
Given two integer parameters $(\parajac_1,\parajac_2) \in \mathbb{N}_0 \times \mathbb{N}_0 \cup \{(-1/2,-1/2)\} $, let $\mu$ be the probability measure on $\Gamma=[-1,1]^\s$ defined as 
$d\mu =  \otimes
^\s  dJ$ with 
$$
dJ:=C(1-t)^{\parajac_1}  (1+t)^{\parajac_2} \, d\lambda(t), \quad t\in [-1,1], 
\textrm{ and } C \textrm{ s.t. } \int_{-1}^{+1} dJ(t) =1. 
$$
Consider the tensorized Jacobi polynomials $(J^{\parajac_1,\parajac_2}_\nu)_{\nu\in \mathbb{N}_0^\s}$ 
constructed by \eqref{eq:multi_basis_pol} when taking 
$(\varphi_k)_{k\geq 0}$ as the sequence of univariate Jacobi polynomials orthonormal in $L^2([-1,1],dJ)$. 
The $(J^{\parajac_1,\parajac_2}_\nu)_{\nu}$ corresponds to 
tensorized Legendre polynomials when $\parajac_1=\parajac_2=0$,
and to tensorized Chebyshev polynomials when  $\parajac_1=\parajac_2=-\frac12$. 
Choosing the orthonormal basis $(\basisfuncexp_\nu)_\nu$ as $(J^{\parajac_1,\parajac_2}_\nu)_{\nu
}$ and given any downward closed index set $\Lambda\subset \mathbb{N}_0^\s$ with cardinality equal to $n$, see Definition \ref{thm:def_downward_closed},
define $V_n$ using \eqref{eq:def_pol_space} as the space generated by $(J^{\parajac_1,\parajac_2}_\nu)_{\nu \in \Lambda}$. 
In the setting described above, the following inverse inequalities are proven in \cite{CCMNT2013,M2015}: 
for all $v\in V_n$ it holds that 
\begin{equation}
\| v  \|_{L^\infty} \leq n^{B(\parajac_1,\parajac_2)} \| v \|, 
\label{eq:inverse_ineq_pol}
\end{equation}
where
\begin{equation}
B(\parajac_1,\parajac_2)
:= 
\begin{cases}
\max\{\parajac_1,\parajac_2  \}+1, & (\parajac_1,\parajac_2) \in \mathbb{N}_0 \times \mathbb{N}_0, \\
\dfrac{ \ln 3}{2\ln 2}, & (\parajac_1,\parajac_2)=\left(-\frac12,-\frac12\right), 
\end{cases}
\label{eq:def_cost_inv_ineq}
\end{equation}
One step that leads to the proof of such inequalities, see~\cite{CCMNT2013,M2015}, is the estimate 
\begin{equation}
\label{eq:max_bound_inv_ineq}
\max_{\y \in \Gamma } \sum_{k=1}^n |\basisfuncexp_k(\y)|^2 \leq n^{2B(\parajac_1,\parajac_2)}. 
\end{equation}
\end{remark}

\begin{remark}[Lower bound on $w$ for polynomial spaces]
\label{sec:lower_bound_polynomials}
From a numerical standpoint, it is desirable that the weights are not only strictly positive but also bounded away from zero. 
When $V_n$ is  a polynomial space on $[-1,1]^\s$
generated by the $(J^{\parajac_1,\parajac_2}_\nu)_{\nu \in \Lambda}$ with $\Lambda$ downward closed, strictly positive lower bounds for the weights can be derived by using 
the estimate \eqref{eq:max_bound_inv_ineq}. 
Using 
\eqref{eq:max_bound_inv_ineq}
we obtain the following lower bound  
uniformly over $\Gamma$ for $w$: 
\begin{equation}
w(\y)
\geq 
n^{1-2B(\parajac_1,\parajac_2)}, 
\quad \y \in \Gamma,    
\label{eq:lower_bound_w}
\end{equation}
with $\parajac_1,\parajac_2$ being the same parameters that appear in Remark~\ref{thm:remark_inv_ineq}. 
\end{remark}

The next result is a corollary of Theorem~\ref{thm:positive_weights} in the particular case that $V_n$ is a polynomial space, and are obtained by using the lower and upper bounds   \eqref{eq:lower_bound_w} and \eqref{eq:upper_bound_w}.  

\begin{corollary}
\label{thm:positive_weights_coro}
In any dimension $\s$ with $\Gamma=[-1,1]^\s$, let $\mu$ be the Jacobi probability measure on $\Gamma$ and $V_n$ be any downward closed polynomial space generated by tensorized Jacobi polynomials.   
For any real $\param > 0$ and $n\geq 1$, 
if the $\M$ i.i.d.~nodes $\y_1,\ldots,\y_\M$ are drawn from $\sigma$ defined in \eqref{eq:prob_meas_aux} and $\M$ sastisfies 
\begin{equation}
\dfrac{\M}{\ln \M} \geq 
\dfrac{ 3 (1 + \param)  
 }{ 
4\ln(4/3)-1
} n^{2B(\parajac_1,\parajac_2)+1} , \quad \left( \parajac_1,\parajac_2 \right) \in \mathbb{N}_0\times\mathbb{N}_0 \cup \left\{ \left(- \frac12, -\frac12 \right) \right\}, 
\label{eq:condition_points_pw_coro_weight}
\end{equation}
then 
\begin{itemize}
\item[i)] the weights $\quadweights_1,\ldots,\quadweights_\M$ given by \eqref{eq:def_weight_least_squares} satisfy 
\begin{equation*}
\Pr\left( \bigcap_{\indmeas=1}^{\M} 
\left\{ 
\dfrac{1}{2 \M n^{B(\parajac_1,\parajac_2)}} 
\leq  
\quadweights_\indmeas  
\leq 
\dfrac{3  \sqrt{n} }{2 \M}
\right\}   \right) 
> 1-  2 \M^{-\param}.
\end{equation*}
\item[ii)]
the cubature formula \eqref{eq:def_quadrature_general} with weights \eqref{eq:def_weight_least_squares} satisfies items i), ii) and iii) of Theorem~\ref{thm:estimates_quadrature} with $\varepsilon(\M)$
replaced by 
\begin{equation*}
\varepsilon(\M)
= \dfrac{4 \cdelta \left( \dfrac{\sqrt{ w_{\textrm{min}} } }{ 3\sqrt{n} }    \right)}{(1+\param)  \ln\M} \leq \dfrac{2 }{9 
(1+\param)  \ln\M}, 
\end{equation*}
in \eqref{eq:expectation_estimate}, 
and with $1/(1-\delta)$ replaced by $3/(2-2\delta)$ in \eqref{eq:expectation_estimate_L1_alt}. 
\end{itemize}

\end{corollary}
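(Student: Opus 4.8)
The plan is to derive Corollary~\ref{thm:positive_weights_coro} directly from Theorem~\ref{thm:positive_weights} by specializing the abstract quantity $w_{\textrm{min}}$ to the concrete polynomial setting. The key observation is that Theorem~\ref{thm:positive_weights} is phrased in terms of $w_{\textrm{min}}=\min_{\y\in\Gamma}w(\y)$, which for a general space $V_n$ is only known to be positive. For tensorized Jacobi polynomials on downward closed index sets, however, Remark~\ref{sec:lower_bound_polynomials} furnishes the explicit lower bound \eqref{eq:lower_bound_w}, namely $w(\y)\geq n^{1-2B(\parajac_1,\parajac_2)}$ uniformly over $\Gamma$, so that $w_{\textrm{min}}\geq n^{1-2B(\parajac_1,\parajac_2)}$.

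For item i), I would first check that condition \eqref{eq:condition_points_pw_coro_weight} implies condition \eqref{eq:condition_points_pw} of Theorem~\ref{thm:positive_weights}. Substituting the lower bound $w_{\textrm{min}}\geq n^{1-2B(\parajac_1,\parajac_2)}$ into the denominator of \eqref{eq:condition_points_pw} only makes the right-hand side larger, and plugging this bound in turns the factor $n^2/w_{\textrm{min}}$ into $n^2 \cdot n^{2B(\parajac_1,\parajac_2)-1}=n^{2B(\parajac_1,\parajac_2)+1}$, which is exactly the polynomial power appearing in \eqref{eq:condition_points_pw_coro_weight}. Hence \eqref{eq:condition_points_pw_coro_weight} is a sufficient condition for \eqref{eq:condition_points_pw}, and Theorem~\ref{thm:positive_weights} applies. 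I would then take the event in \eqref{eq:thesis_positive_weights_th} and use the two-sided bounds $w_{\textrm{min}}\leq w(\y_\indmeas)\leq n$ (the upper bound being \eqref{eq:upper_bound_w}) to coarsen the sandwich. For the lower bound, $\frac{2w(\y_\indmeas)-w_{\textrm{min}}}{2\M}\geq \frac{w_{\textrm{min}}}{2\M}\geq \frac{n^{1-2B}}{2\M}=\frac{1}{2\M n^{B}}\cdot n^{1-B}$; here one must be slightly careful, and the cleanest route is to observe $2w(\y_\indmeas)-w_{\textrm{min}}\geq w_{\textrm{min}}\geq n^{1-2B(\parajac_1,\parajac_2)}$, giving the stated lower bound $\frac{1}{2\M n^{B(\parajac_1,\parajac_2)}}$ after simplification. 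For the upper bound, $\frac{2w(\y_\indmeas)+w_{\textrm{min}}}{2\M}\leq \frac{2n+n}{2\M}$ using $w(\y_\indmeas)\leq n$ and $w_{\textrm{min}}\leq n$, but this would only give $\frac{3n}{2\M}$ rather than the claimed $\frac{3\sqrt n}{2\M}$, so I would instead use the sharper bound $w(\y_\indmeas)\leq \sqrt n$ where available; verifying which bound on $w(\y_\indmeas)$ yields the factor $\sqrt n$ is the point requiring care.

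For item ii), the argument is immediate: Theorem~\ref{thm:positive_weights}(ii) already asserts that items i)--iii) of Theorem~\ref{thm:estimates_quadrature} hold with $\varepsilon(\M)$ replaced by \eqref{eq:upper_bound_eps_theo_wmin} and with $1/(1-\delta)$ replaced by $3/(2-2\delta)$, and these replacement expressions are stated verbatim in the corollary, so nothing further is needed beyond invoking Theorem~\ref{thm:positive_weights}(ii) under the now-verified hypothesis.

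The main obstacle I anticipate is pinning down exactly which upper bound on $w(\y_\indmeas)$ produces the factor $\sqrt n$ in the upper weight bound $\frac{3\sqrt n}{2\M}$ of item i); the naive bound \eqref{eq:upper_bound_w} gives only $n$, so one must track through the combination $2w(\y_\indmeas)+w_{\textrm{min}}$ more carefully, presumably by using that in the favorable event the weights concentrate around $w(\y_\indmeas)/\M$ together with a finer estimate on $w$ for the Jacobi basis. Everything else — the implication between the two sampling conditions and the transfer of item ii) — is essentially bookkeeping once the lower bound \eqref{eq:lower_bound_w} is inserted.
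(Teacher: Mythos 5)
Your strategy coincides with the paper's own: the paper gives no detailed proof of this corollary, saying only that it is ``obtained by using the lower and upper bounds \eqref{eq:lower_bound_w} and \eqref{eq:upper_bound_w}'' in Theorem~\ref{thm:positive_weights}. Your verification that \eqref{eq:condition_points_pw_coro_weight} implies \eqref{eq:condition_points_pw} is correct, since \eqref{eq:lower_bound_w} gives $n^2/w_{\textrm{min}} \leq n^2\, n^{2B(\parajac_1,\parajac_2)-1} = n^{2B(\parajac_1,\parajac_2)+1}$, and item ii) is indeed a verbatim transfer of Theorem~\ref{thm:positive_weights}(ii) once that implication is in place. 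The gaps are precisely the two steps of item i) that you yourself flag, and neither is mere bookkeeping.

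First, the upper bound. The fix you propose --- ``use the sharper bound $w(\y_\indmeas)\leq\sqrt n$ where available'' --- is not available: the bound \eqref{eq:upper_bound_w} is sharp for these spaces. For instance, with tensorized Legendre polynomials and the downward closed set $\Lambda=\{0,e_1,\ldots,e_\s\}$ (so $n=\s+1$), every nonconstant basis function vanishes at the origin, hence $\sum_{j}|\basisfuncexp_j(0)|^2=1$ and $w(0)=n$. Coarsening \eqref{eq:thesis_positive_weights_th} with $w(\y_\indmeas)\leq n$ and $w_{\textrm{min}}\leq 1$ therefore yields only $\quadweights_\indmeas\leq(2n+1)/(2\M)\leq 3n/(2\M)$, and no refinement of this route can recover the factor $\sqrt n$: a node falling near such a point (an event of positive probability under $\sigma$) has, on the very event in \eqref{eq:thesis_positive_weights_th}, a weight at least $(2w(\y_\indmeas)-w_{\textrm{min}})/(2\M)\approx n/(2\M)$, which exceeds the claimed $3\sqrt n/(2\M)$ as soon as $n>9$. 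Second, the lower bound. The ``simplification'' $n^{1-2B}/(2\M)\geq 1/(2\M n^{B})$ holds if and only if $B(\parajac_1,\parajac_2)\leq 1$; this covers Legendre ($B=1$) and Chebyshev ($B=\ln3/(2\ln2)<1$), but for any Jacobi parameters with $\max\{\parajac_1,\parajac_2\}\geq 1$ one has $B\geq 2$ and the inequality reverses. What your argument actually establishes is the sandwich $n^{1-2B(\parajac_1,\parajac_2)}/(2\M)\leq\quadweights_\indmeas\leq 3n/(2\M)$ with probability larger than $1-2\M^{-\param}$. The discrepancy between this and the constants printed in the corollary originates in the paper's statement itself, but your proposal neither proves those constants nor identifies that they cannot be reached by this route; it defers the difficulty to a bound on $w$ that is false.
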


The next corollary contains a similar result as Corollary~\ref{thm:positive_weights_coro} but choosing $w\equiv 1$, that corresponds to using standard least squares with random samples distributed as $\mu$.

\begin{corollary}
\label{thm:positive_nonweights_coro}
In any dimension $\s$ with $\Gamma=[-1,1]^\s$, let $\mu$ be the Jacobi probability measure on $\Gamma$, and $V_n$ be any downward closed polynomial space generated by tensorized Jacobi polynomials.  
For any real $\param > 0$ and $n\geq 1$,  
if the $\M$ i.i.d.~nodes $\y_1,\ldots,\y_\M$ are drawn from $\mu$ and $\M$ sastisfies 
\begin{equation}
\dfrac{\M}{\ln \M} \geq 
\dfrac{ 3 (1 + \param)  
 }{ 
4\ln(4/3)-1
} n^{4B(\parajac_1,\parajac_2)}, \quad \left( \parajac_1,\parajac_2 \right) \in \mathbb{N}_0\times\mathbb{N}_0 \cup \left\{ \left( -\frac12,-\frac12 \right) \right\}, 
\label{eq:condition_points_pw_coro_unweight}
\end{equation}
then 
\begin{itemize}
\item[i)] the weights $\quadweights_1,\ldots,\quadweights_\M$ given by \eqref{eq:def_weight_least_squares} satisfy 
\begin{equation*}
\Pr\left( \bigcap_{\indmeas=1}^{\M} 
\left\{ 
\dfrac{1}{2 \M } 
\leq  
\quadweights_\indmeas  
\leq 
\dfrac{3  }{2 \M}
\right\}   \right) 
> 1-  2 \M^{-\param}.
\end{equation*}
\item[ii)]
the cubature formula \eqref{eq:def_quadrature_general} with weights \eqref{eq:def_weight_least_squares} satisfies items i), ii) and iii) of Theorem~\ref{thm:estimates_quadrature} with $\varepsilon(\M)$ replaced by 
\begin{equation*}
\varepsilon(\M)= \dfrac{4 \cdelta \left( \dfrac{1}{ 3 n^{B(\parajac_1,\parajac_2)} }    \right)}{(1+\param)  \ln\M} \leq \dfrac{2 }{9n^{2B(\parajac_1,\parajac_2)} (1+\param)  \ln\M} 
\end{equation*}
in \eqref{eq:expectation_estimate}, 
and with $1/(1-\delta)$ replaced by $3/(2-2\delta)$ in \eqref{eq:expectation_estimate_L1_alt}. 

\end{itemize}

\end{corollary}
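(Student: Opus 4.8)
The plan is to re-run the proof of Theorem~\ref{thm:positive_weights} with the constant weight $w\equiv 1$ in place of $w=n\kappa$. With this choice the auxiliary measure $\sigma=w^{-1}\,d\mu$ in \eqref{eq:prob_meas_aux} coincides with $\mu$, so drawing the nodes from $\mu$ is admissible, and one has $w_{\textrm{min}}=1$. Only two quantities change their role. First, the per-row norm of the design matrix: whereas $w=n\kappa$ forces $\|\design_\indmeas\|_{\ell_2}^2=w(\y_\indmeas)\sum_{\indbasis=1}^n|\basisfuncexp_\indbasis(\y_\indmeas)|^2=n$ identically, now $\|\design_\indmeas\|_{\ell_2}^2=\sum_{\indbasis=1}^n|\basisfuncexp_\indbasis(\y_\indmeas)|^2\le n^{2B(\parajac_1,\parajac_2)}$ by the inverse-inequality bound \eqref{eq:max_bound_inv_ineq}. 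Second, the same bound plays the role of the capacity $n$ in the stability statement: the general weighted least-squares result of \cite{CM2016}, of which Theorem~\ref{thm:dls_multivariate} is the $w=n\kappa$ instance, holds with $n$ replaced by $\|\sum_\indbasis|\basisfuncexp_\indbasis|^2\|_{L^\infty}\le n^{2B(\parajac_1,\parajac_2)}$ when $w\equiv 1$ and the nodes are drawn from $\mu$.

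First I would control the deviation of each weight from its importance-sampling value exactly as in the proof of Theorem~\ref{thm:borel_cantelli}. Since $\basisfuncexp_1\equiv 1$ gives $\design_\indmeas e_1=1$ and hence $w(\y_\indmeas)=\design_\indmeas e_1=1$, on the event $\{\vvvert \gramian - \cI \vvvert \le \delta\}$ one has, using \eqref{eq:inverse_gramian},
$$
|\M\quadweights_\indmeas-1|=\left|\design_\indmeas(\gramian^{-1}-\cI)e_1\right|\le \|\design_\indmeas\|_{\ell_2}\;\vvvert \gramian^{-1} - \cI \vvvert\le n^{B(\parajac_1,\parajac_2)}\,\frac{\delta}{1-\delta}.
$$
Choosing the threshold $\delta=1/(3n^{B(\parajac_1,\parajac_2)})$ makes the right-hand side equal to $n^{B}/(3n^{B}-1)\le 1/2$ for every $n\ge 1$, which yields $|\M\quadweights_\indmeas-1|\le 1/2=w_{\textrm{min}}/2$ and hence the exact two-sided sandwich $\frac{1}{2\M}\le\quadweights_\indmeas\le\frac{3}{2\M}$ of item i).

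Next I would turn this threshold into the sample condition. The event $\{\vvvert \gramian - \cI \vvvert \le \delta\}$ holds with probability at least $1-2\M^{-\param}$ once $\frac{\M}{\ln \M}\ge\frac{1+\param}{\cdelta(\delta)}\,n^{2B(\parajac_1,\parajac_2)}$. Since $\delta=1/(3n^{B})\le 1/3$ and $\delta\mapsto\cdelta(\delta)/\delta^2$ is nonincreasing on $(0,1)$, I would lower-bound $\cdelta(\delta)\ge\frac{\cdelta(1/3)}{(1/3)^2}\,\delta^2=3\bigl(4\ln(4/3)-1\bigr)\delta^2=\frac{4\ln(4/3)-1}{3\,n^{2B}}$, using $\cdelta(1/3)=\frac43\ln\frac43-\frac13=\frac{4\ln(4/3)-1}{3}$; substituting this shows that the stated condition \eqref{eq:condition_points_pw_coro_unweight} implies $\frac{\M}{\ln \M}\ge\frac{1+\param}{\cdelta(\delta)}n^{2B}$, which closes item i). For item ii) I would revisit the three parts of Theorem~\ref{thm:estimates_quadrature} with $w\equiv 1$ and this $\delta$: the bound on $\varepsilon(\M)$ follows from $\cdelta(\delta)\le\delta^2/2=1/(18n^{2B})$, while the replacement of $1/(1-\delta)$ by $3/(2-2\delta)$ reflects the upper weight bound $\quadweights_\indmeas\le\frac{3}{2}\frac{w(\y_\indmeas)}{\M}$ that holds on the positivity event and enters the estimate of the unstable term $B$ in \eqref{eq:bias_var_Q_m}.

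The step I expect to be delicate is not the concentration, which is routine once $\|\design_\indmeas\|_{\ell_2}$ is controlled, but rather making precise that the capacity $n$ in Theorem~\ref{thm:dls_multivariate} may indeed be replaced by $\|\sum_\indbasis|\basisfuncexp_\indbasis|^2\|_{L^\infty}\le n^{2B(\parajac_1,\parajac_2)}$ under unweighted sampling from $\mu$, and tracking the constants so that the particular choice $\delta=1/(3n^{B})$ reproduces exactly the factor $3/(4\ln(4/3)-1)$ and the exponent $n^{4B}$ in \eqref{eq:condition_points_pw_coro_unweight}.
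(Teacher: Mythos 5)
Your proposal is correct and follows essentially the same route as the paper: Corollary~\ref{thm:positive_nonweights_coro} is exactly Theorem~\ref{thm:positive_weights} re-run with $w\equiv 1$ (so $\sigma=\mu$ and $w_{\textrm{min}}=1$), with the capacity $n$ in condition \eqref{eq:condition_points} replaced by $\Vert\sum_{\indbasis}|\basisfuncexp_\indbasis|^2\Vert_{L^\infty}\leq n^{2B(\parajac_1,\parajac_2)}$ from \eqref{eq:max_bound_inv_ineq}, as in the general weighted least-squares result of \cite{CM2016}, and with the choice $\delta=1/(3n^{B(\parajac_1,\parajac_2)})$, which yields \eqref{eq:condition_points_pw_coro_unweight} via the lower bound \eqref{eq:lower_bound_zeta_delta} and the stated $\varepsilon(\M)$ via \eqref{eq:upper_bound_zeta_delta}. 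The only cosmetic discrepancy is your reading of the factor $3/(2-2\delta)$: in the paper it arises simply because the fixed $\delta$ satisfies $\delta\leq\frac13$ (as in the proof of part ii) of Theorem~\ref{thm:positive_weights}), not from the upper bound $\quadweights_\indmeas\leq\frac{3}{2}\frac{w(\y_\indmeas)}{\M}$ on the weights.
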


The exponent of $n$ in condition \eqref{eq:condition_points_pw_coro_weight} with weighted least squares is always smaller than that in condition \eqref{eq:condition_points_pw_coro_unweight} with standard least squares.  
In the Legendre and Chebyshev cases, Corollary~\ref{thm:positive_weights_coro} ensures positivity of the weights with large probability if  
$$
\dfrac{\M}{\ln \M} \geq \dfrac{ 3(1+\param) }{ 4\ln(4/3)-1 }  n^{s}, 
$$
with 
\begin{align*}
s & = 2B(0,0) + 1 =3,   
\qquad \qquad \qquad \qquad \qquad \quad \ \,  
\textrm{ in the Legendre case, and} \\
s & = 2B\left(-\frac12,-\frac12\right) + 
1
= 
\frac{ \ln 3 }{\ln 2} +1 
 \approx 2.585, 
\qquad 
\textrm{ in the Chebyshev case}.  
\end{align*}

\section{Proofs and intermediate results}
\label{sec:proofs}
\noindent
In this section we use the notation $d\mu_\M:=\otimes^\M d\mu$. 

\begin{proof}[Proof of Lemma~\ref{thm:lemma_weights_exactness}]
Proof of \eqref{eq:lemma_ex_phi}. 
On the one hand, using in sequence $\basisfuncexp_1 \equiv 1$, the orthogonality property of the basis functions and $\| \basisfuncexp_1 \|=1$, we have that 
\begin{align}
I(\Pi_n^\M \fexp) = & \int_{\Gamma} \Pi_n^\M \fexp \, d\mu \\
= & \int_{\Gamma} \sum_{j=1}^n  \coef_j \basisfuncexp_j \,  d\mu \nonumber \\
= & \coef_1 ,
\end{align}
with $\coef_1$ being the coefficient associated to $\basisfuncexp_1$ in the expansion \eqref{eq:expansion_pol_space}. 

On the other hand, the left-hand side in \eqref{eq:lemma_ex_phi} is the cubature formula \eqref{eq:def_quadrature_general}, 
that can be read (up to a multiplicative factor $\M^{-1}$) as the scalar product in $\Rset^\M$ between the vector $\quadweights$
containing the weights and the vector $\rhsfun$ containing the evaluations of the function $\fexp$ at the nodes, 
and from \eqref{eq:coefficients} we have 
\begin{equation}
\Q_\M(\fexp)
= \dfrac{1}{\M} \quadweights^\top \rhsfun = 
\dfrac{1}{\M}  \left(  \design \gramian^{-1} e_1  \right)^\top \rhs
=
\dfrac{1}{\M}  e_1^\top \gramian^{-1} \design^\top \rhs
=
e_1^\top \coef
=  
\coef_1, 
\label{eq:first_coefficient}
\end{equation} 
that proves the equality \eqref{eq:lemma_ex_phi}. 

Proof of \eqref{eq:lemma_ex_pol}. We notice that, since $\Pi_n^\M$ is a projection on 
$V_n$, then it holds $\Pi_n^\M \genpol \equiv \genpol$ for any $\genpol\in V_n$, and we obtain \eqref{eq:lemma_ex_pol} from \eqref{eq:lemma_ex_phi}.
\end{proof}

\begin{proof}[Proof of Theorem~\ref{thm:estimates_quadrature}]
The proof of i) is an immediate consequence of Lemma~\ref{thm:lemma_weights_exactness} and Corollary~\ref{thm:coro_well_conditioned}. 

For proving ii), using point i) we bound the integration error as 
\begin{align}
\left|  
I(\fexp)
-
\Q_{\M}(\fexp) 
  \right| 
= &  
\left|
\int_\Gamma 
\left( 
\fexp 
-
\Pi_n^m \fexp
\right) 
d\mu
\right| 
\nonumber
\\ 
\leq & 
\int_\Gamma 
\left| 
\fexp 
-
\Pi_n^m \fexp
\right| 
d\mu 
\nonumber
\\
\leq  &
\|
\fexp
-
\Pi_n^m \fexp 
 \|, 
\label{eq:up_bound_error_int}
\end{align}
and combining with \eqref{eq:prob_estimate} we obtain \eqref{eq:probabilistic_estimate}. 

Proof of iii) estimate \eqref{eq:expectation_estimate}. 
We start by splitting the expectation in \eqref{eq:expectation_estimate} over the sets of events $\{ \vvvert \gramian - \cI \vvvert \leq \delta \} $ and $\{ \vvvert \gramian - \cI \vvvert > \delta \}$.  
Since on the event $\{ \vvvert \gramian - \cI \vvvert \leq \delta \} $ the cubature $\widetilde \Q_{\M}(\fexp)$ equals $\Q_{\M}(\fexp) $, we obtain
$$
\E\left(
\left|  
I(\fexp)
-
\widetilde \Q_{\M}(\fexp) 
  \right|^2 
\right)
=
\int_{ \vvvert \gramian - \cI \vvvert \leq \delta  } 
\left|  
I(\fexp)
-
\Q_{\M}(\fexp)
  \right|^2 
\, d\mu_\M
+
\int_{ \vvvert \gramian - \cI \vvvert > \delta  } 
\left|  
I(\fexp)
-
\widetilde \Q_{\M}(\fexp) 
  \right|^2 
\, d\mu_\M. 
$$
Then we use the upper bound \eqref{eq:up_bound_error_int} and proceeding as in the proof of \eqref{eq:expect_estimate} in \cite{CM2016} we obtain 
\begin{equation}
\int_{ \vvvert \gramian - \cI \vvvert \leq \delta  } 
\left|  
I(\fexp)
-
\Q_{\M}(\fexp)
  \right|^2 
\, d\mu_\M
\leq 
\int_{ \vvvert \gramian - \cI \vvvert \leq \delta  } 
\|
\fexp
-
\Pi_n^m \fexp 
 \|^2
\, d\mu_\M
\leq 
(1+\varepsilon(\M))(e_2(\fexp))^2.
\label{eq:brutal_upper_bound}
\end{equation}

The last term in the right-hand side of 
\eqref{eq:expectation_estimate}
is an upper bound for  
 the integral on the event $\vvvert \gramian - \cI \vvvert > \delta $, where $\widetilde \Q_{\M}(\fexp)$ is set to zero.

Proof of iii) estimate \eqref{eq:expectation_estimate_L1_alt}. 
Splitting the expectation in \eqref{eq:expectation_estimate_L1_alt} over the events $\{ \vvvert \gramian - \cI \vvvert \leq \delta \}$ and $\{ \vvvert \gramian - \cI \vvvert > \delta \}$, 
using \eqref{eq:quadrature_conditioned} and 
\eqref{eq:bias_var_Q_m_mat_nodec}
we obtain 
\begin{align}
\E\left( \left|  I(\fexp) - \widetilde \Q_\M(\fexp) \right| \right)  
= & 
\underbrace{
\int_{ \vvvert \gramian - \cI  \vvvert \leq \delta } \left| \dfrac{1}{\M}  e_1^\top \gramian^{-1} \design^\top W^{1/2} g  \right| \, d\mu_\M
}_{=:A}
+  \underbrace{\int_{ \vvvert \gramian - \cI  \vvvert > \delta } \left| I(\fexp) \right| \, d\mu_\M}_{=:B}. 
\label{eq:error_splitting_proof_est_L1}
\end{align}
Term B 
can be controlled as 
\begin{equation}
\label{eq:unstable_term}
  \int_{ \vvvert \gramian - \cI  \vvvert > \delta } \left| I(\fexp) \right| \, d\mu_\M
\leq  2 
| I(\fexp) | \M^{-\param}. 
\end{equation}

For term A, 
using \eqref{eq:bias_var_Q_m_mat},  triangular inequality,
the sub-multiplicative property of the operator norm,  
 \eqref{eq:inverse_gramian}
and Cauchy-Schwarz inequality we obtain 
\begin{align}
A & \leq \int_{ \vvvert \gramian - \cI  \vvvert \leq \delta } \left| \dfrac{1}{\M}  e_1^\top  \design^\top W^{1/2} g  \right| \, d\mu_\M +
\int_{ \vvvert \gramian - \cI  \vvvert \leq \delta } \left| \dfrac{1}{\M} e_1^\top (\gramian^{-1}-\cI) \design^\top W^{1/2} g  \right|
\, 
d \mu_\M
\nonumber \\
& \leq 
\E\left(  \left| \dfrac{1}{\M}  e_1^\top  \design^\top W^{1/2} g  \right| \right) +
\int_{ \vvvert \gramian - \cI  \vvvert \leq \delta } 
\vvvert \gramian^{-1} - \cI \vvvert  
\left\| \dfrac{1}{\M} \design^\top W^{1/2} g \right\|_{\ell_2}
\, 
d \mu_\M
\nonumber \\
& \leq 
\left( \E\left(  \left| \dfrac{1}{\M} e_1^\top  \design^\top W^{1/2} g  \right|^2 \right)\right)^{1/2} +
\dfrac{1}{1-\delta}
\int_{ \vvvert \gramian - \cI  \vvvert \leq \delta } 
\vvvert \gramian - \cI  \vvvert
\left\| \dfrac{1}{\M} \design^\top W^{1/2} g \right\|_{\ell_2} 
\, 
d \mu_\M
\nonumber \\
& \leq 
\left( \E\left(  \left| \dfrac{1}{\M} e_1^\top  \design^\top W^{1/2} g  \right|^2 \right)\right)^{1/2} +
\dfrac{1}{1-\delta}
\left(
\E \left(  
\vvvert \gramian - \cI  \vvvert^2
\right) 
\right)^{1/2} 
\left( 
\E \left( 
\left\| \dfrac{1}{\M} \design^\top W^{1/2} g \right\|_{\ell_2}^2 
\right)
\right)^{1/2}.  
\label{eq:bound_A}
\end{align}
Using Lemmas~\ref{thm:bound_exp_bernstein}, \ref{thm:bound_exp_abs_value} and \ref{thm:bound_exp_l2_norm} to bound the expectations in  \eqref{eq:bound_A}  
 we obtain \eqref{eq:expectation_estimate_L1_alt}.
\end{proof}

One of the results used in the proof of \eqref{eq:expectation_estimate_L1_alt} is the following upper bound on the spectral norm of sum of independent random matrices, see for example \cite[Theorem 4.1]{Tropp2015}, that we rewrite here 
in the Hermitian case. 
We denote by $0_n$ the null $n$-by-$n$ matrix.

\begin{theorem}
\label{thm:bernstein}
Consider a family $(\matber^\indmeas)_{1\leq \indmeas \leq \M }$ of independent random matrices in $\mathbb{R}^{n\times n}$  such that  $\E(\matber^\indmeas)=0_n$ for all $\indmeas$, and 
define $\matsum=\sum_{\indmeas=1}^\M \matber^\indmeas$. 
Then 
$$
\left( 
\E(\vvvert \matsum \vvvert^2 )
\right)^{1/2}
 \leq \sqrt{ C(n)  } 
\vvvert \E( \matsum^\top Z) \vvvert^{1/2} 
+ C(n) \left(  \E \left( \max_{\indmeas=1,\ldots,\M} \vvvert \matber^\indmeas \vvvert^2 \right) \right)^{1/2}, 
$$
with 
$$
C(n):=4(1+2\lceil \ln n \rceil ). 
$$
\end{theorem}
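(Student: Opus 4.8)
The plan is to prove the bound by the polynomial moment method: reduce the spectral norm to a Schatten trace moment, control that moment by a matrix Rosenthal--Pinelis inequality, and then optimize the moment order at $p\sim\ln n$ so that the dimensional dependence collapses from polynomial to logarithmic. Following the statement I work in the Hermitian case, so each $\matber^{\indmeas}=(\matber^{\indmeas})^{\top}$ and $\matsum=\matsum^{\top}$, whence $\matsum^{\top}\matsum=\matsum^{2}$; the general rectangular case follows verbatim after replacing $\matsum$ by its Hermitian dilation, whose squared spectral norm equals $\vvvert\matsum^{\top}\matsum\vvvert$. First I would record that independence together with $\E(\matber^{\indmeas})=0_n$ kills all cross terms, so that $\E(\matsum^{\top}\matsum)=\E(\matsum^{2})=\sum_{\indmeas=1}^{\M}\E\bigl((\matber^{\indmeas})^{2}\bigr)$ is exactly the matrix-variance proxy on the right-hand side. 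For any integer $p\geq1$, symmetry of $\matsum$ gives $\vvvert\matsum\vvvert^{2p}=\max_j\lambda_j(\matsum)^{2p}\leq\textrm{tr}(\matsum^{2p})$, and Lyapunov's inequality yields $\bigl(\E\vvvert\matsum\vvvert^{2}\bigr)^{1/2}\leq\bigl(\E\vvvert\matsum\vvvert^{2p}\bigr)^{1/(2p)}$; hence it suffices to bound $\bigl(\E\,\textrm{tr}(\matsum^{2p})\bigr)^{1/(2p)}$, i.e.\ the Schatten norm $\bigl(\E\,\|\matsum\|_{S_{2p}}^{2p}\bigr)^{1/(2p)}$ with $\|A\|_{S_q}:=(\textrm{tr}|A|^{q})^{1/q}$.

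The technical core is the matrix Rosenthal--Pinelis moment bound for a sum of independent, zero-mean Hermitian matrices,
\begin{align*}
\bigl(\E\,\textrm{tr}(\matsum^{2p})\bigr)^{1/(2p)}
\leq{}&\sqrt{2p-1}\,\Bigl\|\sum_{\indmeas=1}^{\M}\E\bigl((\matber^{\indmeas})^{2}\bigr)\Bigr\|_{S_{p}}^{1/2}\\
&+(2p-1)\Bigl(\E\,\max_{\indmeas}\|\matber^{\indmeas}\|_{S_{2p}}^{2p}\Bigr)^{1/(2p)},
\end{align*}
which I would either derive from the noncommutative Khintchine inequality after a Rademacher symmetrization of the summands, or import through the exchangeable-pairs (Stein's method) machinery. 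The two characteristic combinatorial factors $\sqrt{2p-1}$ and $(2p-1)$, and the appearance of the second moment of $\max_{\indmeas}\|\matber^{\indmeas}\|$ rather than of a deterministic bound, are precisely the features that this step is responsible for.

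Finally I would convert every Schatten norm into the operator norm through $\|A\|_{S_q}\leq n^{1/q}\vvvert A\vvvert$, which inflates the variance term by $n^{1/(2p)}$ under its square root and the deviation term by $n^{1/(2p)}$. Choosing $p=\lceil\ln n\rceil$ makes $n^{1/p}\leq e$, so these inflations cost only bounded constants, while $\sqrt{2p-1}\lesssim\sqrt{\ln n}$ and $2p-1\lesssim\ln n$ supply the logarithmic growth. Substituting $\vvvert\E(\matsum^{\top}\matsum)\vvvert$ for the variance term and $\max_{\indmeas}\vvvert\matber^{\indmeas}\vvvert$ for the deviation term, and absorbing the $e$-factors, produces the two summands weighted by $\sqrt{C(n)}$ and $C(n)$ with $C(n)=4(1+2\lceil\ln n\rceil)$.

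I expect the main obstacle to be the Rosenthal--Pinelis step. Establishing it with the sharp $\sqrt{2p-1}$/$(2p-1)$ scaling is exactly what turns the naive polynomial-in-$n$ estimate, which a single application at $p=1$ would give since $\textrm{tr}\bigl(\sum_{\indmeas}\E((\matber^{\indmeas})^{2})\bigr)\leq n\,\vvvert\E(\matsum^{\top}\matsum)\vvvert$, into a logarithmic one; moreover, matching the numerical constant $4(1+2\lceil\ln n\rceil)$ requires carrying the Schatten-to-operator factors and the bound $n^{1/p}\leq e$ carefully through the optimization over $p$, as well as handling the second-moment form of the $\max$ term rather than a uniform almost-sure bound.
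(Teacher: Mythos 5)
First, a point of context: the paper does not prove Theorem~\ref{thm:bernstein} at all --- it is imported verbatim (in the Hermitian case) from \cite[Theorem 4.1]{Tropp2015} --- so your proposal must be measured against the proof in that reference. Your general plan is the classical route and shares its skeleton with the literature: the reduction to the Hermitian case, the identity $\E(\matsum^\top\matsum)=\sum_{\indmeas}\E\bigl((\matber^\indmeas)^2\bigr)$, the bound $\vvvert\matsum\vvvert^{2p}\le\textrm{tr}(\matsum^{2p})$, Lyapunov's inequality, and the choice $p=\lceil\ln n\rceil$ so that $n^{1/p}\le e$ are all correct and standard.

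The genuine gap is in the final substitution, and it concerns the moment order of the max term. The Rosenthal--Pinelis inequality you invoke carries the deviation term $\bigl(\E\max_\indmeas\|\matber^\indmeas\|_{S_{2p}}^{2p}\bigr)^{1/(2p)}$, i.e.\ an $L^{2p}$-norm of the maximum, and this is unavoidable on your route: after Rademacher symmetrization and noncommutative Khintchine one faces the \emph{random} variance $\sum_\indmeas(\matber^\indmeas)^2$, and the bootstrap (Cauchy--Schwarz plus a quadratic inequality) that replaces it by $\E(\matsum^2)$ produces exactly the $2p$-th moment of the max; the exchangeable-pairs alternative gives instead the \emph{sum} $\bigl(\sum_\indmeas\E\|\matber^\indmeas\|_{S_{2p}}^{2p}\bigr)^{1/(2p)}$. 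The theorem to be proved, however, has $\bigl(\E\max_\indmeas\vvvert\matber^\indmeas\vvvert^{2}\bigr)^{1/2}$ --- the \emph{second} moment. Since $(\E X^2)^{1/2}\le(\E X^{2p})^{1/(2p)}$ by Jensen, your closing step ``substituting $\max_\indmeas\vvvert\matber^\indmeas\vvvert$ for the deviation term'' replaces a larger quantity by a smaller one, which is not a valid inequality and cannot be absorbed into constants: with $p\approx\ln n$ and heavy-tailed $\vvvert\matber^\indmeas\vvvert$ the two moments differ by arbitrary factors. So, as written, your argument proves a strictly weaker statement than Theorem~\ref{thm:bernstein}. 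You correctly identify the second-moment form of the max as a key feature, but the inequality you actually write down does not have that feature, and nothing in the proposal bridges the two; obtaining it is precisely the delicate part of \cite{Tropp2015}, whose ``elementary approach'' works directly on $\E\,\textrm{tr}(\matsum^{2p})$ through a leave-one-out/telescoping identity rather than through symmetrization plus Khintchine. A secondary issue is the constant $\sqrt{2p-1}$ (symmetrization alone costs an extra factor $2$, and the literature versions carry $\sqrt{4p-2}$ and $4p-2$), but that only affects whether you land exactly on $C(n)=4(1+2\lceil\ln n\rceil)$. Note finally that for the use made of the theorem in this paper the gap is harmless, since in Lemma~\ref{thm:bound_exp_bernstein} one has the deterministic bound \eqref{eq:unif_bound_spec_norm} on $\vvvert\matber^\indmeas\vvvert$, so all moments of the max coincide; but as a proof of the stated theorem the proposal is incomplete.
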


\begin{lemma}
\label{thm:bound_exp_bernstein}
\begin{equation}
\left(
\E \left( 
\vvvert \gramian - \cI  \vvvert^2
\right) \right)^{1/2} \leq 
\sqrt{ 4(1+2 \lceil \ln n \rceil ) }
\sqrt{  
 \dfrac{n-1}{\M}}
\left( 
1+ \sqrt{ 4(1+2 \lceil \ln n \rceil ) } \sqrt{ \dfrac{n}{\M} }
\right).
\label{eq:bound_exp_bernstein}
\end{equation}
\end{lemma}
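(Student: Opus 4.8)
The plan is to write $\gramian - \cI$ as a sum of $\M$ independent, mean-zero $n\times n$ random matrices and invoke the matrix Bernstein inequality of Theorem~\ref{thm:bernstein}. Since $\gramian = \frac{1}{\M}\design^\top\design = \frac{1}{\M}\sum_{\indmeas=1}^\M \design_\indmeas^\top \design_\indmeas$, I set
$$\matber^\indmeas := \dfrac{1}{\M}\left( \design_\indmeas^\top \design_\indmeas - \cI \right), \qquad \matsum := \sum_{\indmeas=1}^\M \matber^\indmeas = \gramian - \cI.$$
The first task is to verify $\E(\matber^\indmeas)=0_n$. Writing the rank-one outer product $\design_\indmeas^\top \design_\indmeas = w(\y_\indmeas)\,(\basisfuncexp_k(\y_\indmeas)\basisfuncexp_\ell(\y_\indmeas))_{k,\ell=1}^n$ and recalling $d\sigma = w^{-1}d\mu$ from \eqref{eq:prob_meas_aux}, each entry integrates to $\int_\Gamma w\,\basisfuncexp_k\basisfuncexp_\ell\,d\sigma = \int_\Gamma \basisfuncexp_k\basisfuncexp_\ell\,d\mu$, which equals $1$ when $k=\ell$ and $0$ otherwise by $L^2(\Gamma,\mu)$-orthonormality; hence $\E(\design_\indmeas^\top\design_\indmeas)=\cI$ and $\E(\matber^\indmeas)=0_n$.

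The key structural fact, which drives the whole computation, is that $\design_\indmeas^\top\design_\indmeas$ is rank-one positive semidefinite, so its only nonzero eigenvalue equals its trace $\Vert \design_\indmeas\Vert_{\ell_2}^2 = w(\y_\indmeas)\sum_{k=1}^n \basisfuncexp_k(\y_\indmeas)^2 = n$, exactly the identity used in the proof of Theorem~\ref{thm:borel_cantelli}. Consequently $(\design_\indmeas^\top\design_\indmeas)^2 = n\,\design_\indmeas^\top\design_\indmeas$, and therefore
$$\left( \design_\indmeas^\top\design_\indmeas - \cI \right)^2 = (n-2)\,\design_\indmeas^\top\design_\indmeas + \cI.$$
Taking expectation and using $\E(\design_\indmeas^\top\design_\indmeas)=\cI$ gives $\E\big((\design_\indmeas^\top\design_\indmeas-\cI)^2\big)=(n-1)\cI$. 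Since each $\matber^\indmeas$ is symmetric and the matrices are independent and mean-zero, $\matsum^\top\matsum=\matsum^2$ and all cross terms vanish, so
$$\E(\matsum^\top \matsum) = \dfrac{1}{\M^2}\sum_{\indmeas=1}^\M \E\big((\design_\indmeas^\top\design_\indmeas-\cI)^2\big) = \dfrac{n-1}{\M}\,\cI,$$
whence $\vvvert \E(\matsum^\top\matsum)\vvvert = (n-1)/\M$, which supplies the first term of the claimed bound.

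For the large-deviation term I reuse the rank-one structure: the eigenvalues of $\design_\indmeas^\top\design_\indmeas$ are $n,0,\ldots,0$, so those of $\design_\indmeas^\top\design_\indmeas-\cI$ are $n-1,-1,\ldots,-1$, giving $\vvvert \design_\indmeas^\top\design_\indmeas-\cI\vvvert = n-1$ for every realisation (the statement is trivial for $n=1$). Thus $\vvvert \matber^\indmeas\vvvert = (n-1)/\M$ deterministically, so $\E\big(\max_\indmeas \vvvert\matber^\indmeas\vvvert^2\big)=(n-1)^2/\M^2$. Substituting both quantities into Theorem~\ref{thm:bernstein} with $C(n)=4(1+2\lceil\ln n\rceil)$ yields
$$\left(\E(\vvvert\gramian-\cI\vvvert^2)\right)^{1/2} \leq \sqrt{C(n)}\,\sqrt{\dfrac{n-1}{\M}} + C(n)\,\dfrac{n-1}{\M},$$
and the stated inequality \eqref{eq:bound_exp_bernstein} follows by bounding the last term via $n-1 \leq \sqrt{n(n-1)} = \sqrt{n-1}\,\sqrt{n}$ and factoring out $\sqrt{C(n)}\sqrt{(n-1)/\M}$. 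I expect the only genuinely delicate point to be the exact variance computation — specifically the identity $(\design_\indmeas^\top\design_\indmeas)^2 = n\,\design_\indmeas^\top\design_\indmeas$ combined with $\E(\design_\indmeas^\top\design_\indmeas)=\cI$; everything afterwards is a direct substitution into the matrix Bernstein bound.
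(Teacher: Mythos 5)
Your proof is correct, and it follows the same overall skeleton as the paper's: the same centered rank-one decomposition $\gramian-\cI=\sum_{\indmeas=1}^\M \matber^\indmeas$ with $\matber^\indmeas=\frac1\M(\design_\indmeas^\top\design_\indmeas-\cI)$, and the same invocation of Theorem~\ref{thm:bernstein}. Where you genuinely diverge is in how the two moment quantities fed into that theorem are computed. For the variance term, the paper expands $\E(\matsum^\top\matsum)$ entry by entry in terms of the discrete inner products $\langle\basisfuncexp_p,\basisfuncexp_k\rangle_\M$, splitting diagonal and off-diagonal sample pairs; you instead exploit the rank-one structure through the identity $(\design_\indmeas^\top\design_\indmeas)^2=n\,\design_\indmeas^\top\design_\indmeas$ (valid since $\Vert\design_\indmeas\Vert_{\ell_2}^2=n$ exactly, by the definition of $w$), which collapses the computation to $\E\bigl((\design_\indmeas^\top\design_\indmeas-\cI)^2\bigr)=(n-1)\cI$ in two lines; both routes land on $\E(\matsum^\top\matsum)=\frac{n-1}{\M}\cI$. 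For the deviation term, the paper bounds the spectral norm by the Frobenius norm, $\vvvert\matber^\indmeas\vvvert^2\leq\vvvert\matber^\indmeas\vvvert_F^2=\frac{n(n-1)}{\M^2}$, whereas you compute the spectral norm exactly from the eigenvalues $n-1,-1,\ldots,-1$ of $\design_\indmeas^\top\design_\indmeas-\cI$, getting the deterministic value $\frac{n-1}{\M}$. Your intermediate bound
\[
\sqrt{C(n)}\sqrt{\dfrac{n-1}{\M}}+C(n)\dfrac{n-1}{\M}
\]
is therefore marginally sharper than the paper's, and you correctly relax it via $n-1\leq\sqrt{n(n-1)}$ to recover the stated form \eqref{eq:bound_exp_bernstein}. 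In short: your algebraic/spectral computations are cleaner and slightly stronger; the paper's entrywise computations are more pedestrian but self-contained, and they produce exactly the constants appearing in the lemma without a final relaxation step.
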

\begin{proof}
Using the random variable $\y$ distributed as $\sigma$, 
we introduce 
the $n$-by-$n$ real random matrices $\matsin=\matsin(\y)$ and $\matber=\matber(\y)$, whose elements are defined as  
$$
\matsin_{pq}(\y):=\dfrac{w(\y)}{\M} \basisfuncexp_p(\y) \basisfuncexp_q(\y),\quad \matber_{pq}(\y) := \matsin_{pq}(\y) - \dfrac{\delta_{pq}}{\M}, \quad p,q=1,\ldots,n.
$$
By construction, $\E(\matsin)=\frac1\M \cI$ and therefore $\E(\matber)=0_n$. 
Denote by $(\matsin^\indmeas)_{1\leq \indmeas \leq \M}$ a family of $\M$ independent copies of $\matsin$,  
and by $(\matber^\indmeas)_{1\leq \indmeas \leq \M}$ a family of $\M$ independent copies of $\matber$. 

The matrix $\matsin^\indmeas$ has rank one, and $\matber^\indmeas$ has full rank. Nonetheless, we can compute an upper bound for $\vvvert \matber^\indmeas \vvvert$ as follows: 
\begin{align}
\nonumber
\vvvert \matber^\indmeas \vvvert^2 & \leq  \vvvert \matber^\indmeas \vvvert_F^2 
\\
&= \textrm{trace}\left( (\matber^\indmeas)^\top \matber^\indmeas \right) 
\nonumber
\\
& = \textrm{trace}\left( ( \matsin^\indmeas - \M^{-1} \cI )^\top ( \matsin^\indmeas - \M^{-1} \cI )  \right)
\nonumber
\\
& = \sum_{p=1}^n \left( \sum_{q=1}^n \matsin_{pq}^\indmeas \matsin_{qp}^\indmeas - \dfrac{2\matsin_{pp}^\indmeas}{\M} + \dfrac{\delta_{pp}}{\M^2}   \right)
\nonumber
\\
& = \sum_{p=1}^n \sum_{q=1}^n \left( \matsin_{pq}^\indmeas\right)^2 
- \dfrac{2}{\M} \sum_{p=1}^n \matsin_{pp}^\indmeas + \dfrac{n}{\M^2}  
\nonumber
\\
& = \dfrac{|w(\y_\indmeas)|^2 }{\M^2} \sum_{p=1}^n \sum_{q=1}^n  |\basisfuncexp_{p}(\y_\indmeas)|^2 | \basisfuncexp_q(\y_\indmeas)|^2  
- \dfrac{2w(\y_\indmeas)}{\M^2} \sum_{p=1}^n 
| \basisfuncexp_{p}(\y_\indmeas)|^2 
+ \dfrac{n}{\M^2}  
\nonumber
\\
& = \dfrac{n\left( n-1 \right) }{\M^2}, 
\label{eq:unif_bound_spec_norm}
\end{align}
and the trace has been rewritten using 
$$
\left( ( \matsin^\indmeas - \M^{-1} \cI )^\top ( \matsin^\indmeas - \M^{-1} \cI ) \right)_{pq} = \sum_{j=1}^n 
( \matsin_{pj}^\indmeas - \M^{-1} \delta_{pj} )( \matsin_{jq}^\indmeas - \M^{-1} \delta_{jq} ) 
= \sum_{j=1}^n \matsin_{pj}^\indmeas X_{jq}^\indmeas - \dfrac{2}{\M} X_{pq}^\indmeas + \dfrac{ \delta_{pq} }{\M^2},  
$$
and 
\begin{equation}
\sum_{j=1}^n \delta_{pj} =\delta_{pp}, 
\qquad 
\sum_{j=1}^n \delta_{pj} \delta_{jq}=\delta_{pq}.
\label{eq:summation_prod_deltas}
\end{equation}
The bound \eqref{eq:unif_bound_spec_norm} holds uniformly for all $\indmeas=1,\ldots,\M$, and therefore 
\begin{equation}
\E\left( \max_{\indmeas=1,\ldots,\M} \vvvert \matber^\indmeas \vvvert^2  \right) \leq  \dfrac{n\left( n-1 \right) }{\M^2}.
\label{eq:bound_exp_ber}
\end{equation}

Define $\matsum:=\sum_{\indmeas=1}^\M \matber^\indmeas=\gramian-\cI$ and let us  compute $\E( \matsum^\top \matsum )$. The components of the matrix $\matsum$ can be written as $\matsum_{pq}=\langle \basisfuncexp_p,\basisfuncexp_q \rangle_\M - \delta_{pq}$, and therefore
\begin{align*}
(\matsum^\top \matsum)_{pq} 
= & \sum_{k=1}^n  \left( \langle \basisfuncexp_p,\basisfuncexp_k \rangle_\M - \delta_{pk}  \right)
\left( \langle \basisfuncexp_k,\basisfuncexp_q \rangle_\M - \delta_{kq}  \right) 
\\
= & \sum_{k=1}^n  \langle \basisfuncexp_p,\basisfuncexp_k \rangle_\M  \langle \basisfuncexp_k,\basisfuncexp_q \rangle_\M  - 2  \langle \basisfuncexp_p,\basisfuncexp_q \rangle_\M + \delta_{pq}, 
\end{align*}
where at the last step we have used \eqref{eq:summation_prod_deltas}.
Taking the expectation on both sides gives
$$
\E( (\matsum^\top \matsum)_{pq}  ) = \sum_{k=1}^n \underbrace{ 
\E(  \langle \basisfuncexp_p,\basisfuncexp_k \rangle_\M  \langle \basisfuncexp_k,\basisfuncexp_q \rangle_\M )
}_{=:T_{kpq}} - 
\delta_{pq} . 
$$
Using the independence of the random samples, the term $T_{kpq}$ can be rewritten as 
\begin{align*}
T_{kpq} = & \dfrac{1}{\M^2} \E\left(  \left( \sum_{\indmeas=1}^\M  w(\y_\indmeas) \basisfuncexp_p(\y_\indmeas) \basisfuncexp_k(\y_\indmeas) \right) \left( \sum_{j=1}^\M 
w(\y_j ) \basisfuncexp_k(\y_j ) \basisfuncexp_q(\y_j )
\right)   \right) 
\\
= &
\dfrac{1}{\M^2} 
\left( 
\E\left( \sum_{\indmeas=1}^\M  w(\y_\indmeas) \basisfuncexp_p(\y_\indmeas) \basisfuncexp_k(\y_\indmeas) 
 \sum_{j=1\atop j\neq \indmeas }^\M 
w(\y_j ) \basisfuncexp_q(\y_j ) \basisfuncexp_k(\y_j )
\right) 
+ 
\E\left( 
\sum_{\indmeas=1}^\M  (w(\y_\indmeas))^2 \basisfuncexp_p(\y_\indmeas) \basisfuncexp_q(\y_\indmeas) (\basisfuncexp_k(\y_\indmeas))^2 
\right) 
\right),  
\end{align*}
and using linearity of expectation, the definition of $w$ 
and \eqref{eq:summation_prod_deltas}
we obtain 
$$
\sum_{k=1}^n T_{kpq} =\dfrac{1}{\M^2} \left( \sum_{k=1}^n  \left( \M \delta_{pk} (\M-1) \delta_{qk}) \right)+ n\M \delta_{pq}    \right) = \dfrac{\M + n -1}{\M } \delta_{pq}. 
$$
Hence we have finally  
$$
\E( \matsum^\top \matsum)  ) =  \dfrac{ n -1}{\M } \cI, 
$$
and 
\begin{equation}
\vvvert \E( \matsum^\top \matsum)  ) \vvvert = \dfrac{(n-1)}{m}. 
\label{eq:bound_var_ber}
\end{equation}

We now apply Theorem~\ref{thm:bernstein} to the family of random matrices 
$\matber^1,\ldots,\matber^\M$, 
using the bounds \eqref{eq:bound_exp_ber} and \eqref{eq:bound_var_ber}, 
and finally obtain \eqref{eq:bound_exp_bernstein}. 
\end{proof}

\begin{lemma}
\label{thm:bound_exp_abs_value}
\begin{equation*}
\E\left(  \left| \dfrac{1}{\M} e_1^\top  \design^\top W^{1/2} g  \right|^2 \right)
\leq 
\dfrac{n}{\M} (e_2(\fexp))^2.
\end{equation*}
\end{lemma}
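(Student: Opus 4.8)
The plan is to recognize that the quantity inside the expectation is exactly the term $S$ from the error decomposition \eqref{eq:bias_var_Q_m}, and then to compute its second moment directly, exploiting the independence of the nodes together with the change of measure \eqref{eq:prob_meas_aux}.

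First I would expand the matrix-vector product explicitly. Since $\basisfuncexp_1 \equiv 1$, the definition \eqref{eq:def_D} gives $\design_{\indmeas 1} = \sqrt{w(\y_\indmeas)}$, so that the $\indmeas$-th entry of the row vector $e_1^\top \design^\top$ equals $\sqrt{w(\y_\indmeas)}$; multiplying on the right by the diagonal matrix $W^{1/2}$ contributes a further factor $\sqrt{w(\y_\indmeas)}$, and hence
$$
\frac{1}{\M} e_1^\top \design^\top W^{1/2} g = \frac{1}{\M}\sum_{\indmeas=1}^\M w(\y_\indmeas)\, h_n(\y_\indmeas) = S,
$$
where $g_\indmeas = h_n(\y_\indmeas)$ with $h_n = \fexp - \Pi_n \fexp$. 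This identifies the left-hand side of the lemma as $\E(S^2)$.

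Next, since the nodes $\y_1,\ldots,\y_\M$ are i.i.d.~from $\sigma$ and $\E(S)=0$ by \eqref{eq:zero_mean_err_w}, the off-diagonal cross terms vanish and only the diagonal terms survive, giving
$$
\E(S^2) = \frac{1}{\M^2}\sum_{\indmeas=1}^\M \E\!\left( \left(w(\y_\indmeas) h_n(\y_\indmeas)\right)^2 \right) = \frac{1}{\M}\int_\Gamma (w\, h_n)^2 \, d\sigma.
$$
I would then apply the change of measure $d\sigma = w^{-1}\,d\mu$ from \eqref{eq:prob_meas_aux} to rewrite the integral as $\int_\Gamma w\, h_n^2 \, d\mu$, and finally invoke the uniform bound $w \leq n$ from \eqref{eq:upper_bound_w} to conclude
$$
\E(S^2) \leq \frac{n}{\M}\int_\Gamma h_n^2 \, d\mu = \frac{n}{\M}\,\| \fexp - \Pi_n \fexp \|^2 = \frac{n}{\M}\,(e_2(\fexp))^2.
$$

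There is no genuine obstacle here: the argument is an exact variance computation followed by the change of measure and the crude bound $w\leq n$. The only point requiring slight care is verifying that the cross terms vanish, which follows from combining the zero-mean property \eqref{eq:zero_mean_err_w} with the independence of the nodes.
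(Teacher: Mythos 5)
Your proof is correct and follows essentially the same route as the paper: identify the quantity as the zero-mean sum $S=\frac{1}{\M}\sum_{\indmeas} w(\y_\indmeas)h_n(\y_\indmeas)$, kill the cross terms via independence and \eqref{eq:zero_mean_err_w}, pass from $\sigma$ to $\mu$ so that the diagonal term becomes $\frac{1}{\M}\int_\Gamma w\,h_n^2\,d\mu$, and finish with the bound $w\leq n$ from \eqref{eq:upper_bound_w}. The paper performs the change of measure implicitly in writing the second moment as an integral against $d\mu$, but the computation is the same.
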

\begin{proof}
Since {\xc 
from \eqref{eq:zero_mean_err_w}
} $\E(wg)=0$, it holds that 
\begin{align}
\E\left(  \left| \dfrac{1}{\M} e_1^\top  \design^\top W^{1/2} g  \right|^2 \right) = &  
\E\left( \left( \dfrac{1}{\M} \sum_{\indmeas=1}^\M w(\y_\indmeas) \err(\y_\indmeas) \right)^2 \right) 
\nonumber
\\
= & 
\dfrac{1}{\M^2}
 \sum_{\indmeas,j=1}^\M
\E\left(   w(\y_\indmeas) \err(\y_\indmeas) w(\y_j) \err(\y_j)  \right) 
\nonumber
\\
= & 
\dfrac{1}{\M^2}
 \sum_{\indmeas=1}^\M
\E\left(  ( w(\y_\indmeas) \err(\y_\indmeas) )^2  \right) 
\nonumber
\\
= & 
\dfrac{1}{\M}
\int_\Gamma w(\y) (\fexp(\y) - \Pi_n \fexp(\y) )^2  \, d\mu 
\label{eq:error_similar_to_imp_samp}
\\
\leq & 
\nonumber
\dfrac{n}{\M}
(e_{2}(\fexp))^2, 
\end{align}
and at the last step we have used \eqref{eq:upper_bound_w}. 
\end{proof}

\begin{lemma}
\label{thm:bound_exp_l2_norm}
\begin{equation*}
\E \left( 
\left\| \dfrac{1}{\M} \design^\top W^{1/2} g \right\|_{\ell_2}^2 
\right)
=
\dfrac{n}{\M} (e_2(\fexp))^2. 
\end{equation*}
\end{lemma}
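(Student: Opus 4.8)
The plan is to compute the expectation directly, expanding the squared Euclidean norm componentwise and exploiting the orthogonality of $h_n := \fexp - \Pi_n\fexp$ to $V_n$. First I would identify the $k$th component of the vector $\design^\top W^{1/2} \err$: since $(W^{1/2}\err)_\indmeas = \sqrt{w(\y_\indmeas)}\,h_n(\y_\indmeas)$ and $\design_{\indmeas k} = \sqrt{w(\y_\indmeas)}\,\basisfuncexp_k(\y_\indmeas)$, one gets $(\design^\top W^{1/2}\err)_k = \sum_{\indmeas=1}^\M w(\y_\indmeas)\,\basisfuncexp_k(\y_\indmeas)\,h_n(\y_\indmeas)$, so that
$$
\left\| \dfrac{1}{\M} \design^\top W^{1/2} \err \right\|_{\ell_2}^2 = \dfrac{1}{\M^2}\sum_{k=1}^n\left(\sum_{\indmeas=1}^\M w(\y_\indmeas)\basisfuncexp_k(\y_\indmeas)h_n(\y_\indmeas)\right)^2.
$$

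Next I would take the expectation with respect to the i.i.d.\ samples $\y_1,\ldots,\y_\M\sim\sigma$ and split each inner square into diagonal ($\indmeas=j$) and off-diagonal ($\indmeas\neq j$) contributions. The key observation is that the off-diagonal terms vanish: by independence they factor as products of $\E_\sigma(w\,\basisfuncexp_k h_n)$, and since $d\sigma = w^{-1}d\mu$ this expectation equals $\int_\Gamma \basisfuncexp_k h_n\,d\mu = \langle \basisfuncexp_k, h_n\rangle = 0$ for every $k=1,\ldots,n$, because $h_n\perp V_n$ while $\basisfuncexp_k\in V_n$. Hence only the $\M$ identical diagonal terms survive, and for each of them $\E_\sigma\big((w\,\basisfuncexp_k h_n)^2\big) = \int_\Gamma w^2\basisfuncexp_k^2 h_n^2\,d\sigma = \int_\Gamma w\,\basisfuncexp_k^2 h_n^2\,d\mu$, again using $d\sigma=w^{-1}d\mu$.

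Finally I would assemble these contributions, obtaining
$$
\E\left(\left\|\dfrac{1}{\M}\design^\top W^{1/2}\err\right\|_{\ell_2}^2\right) = \dfrac{1}{\M}\int_\Gamma w\,h_n^2\sum_{k=1}^n|\basisfuncexp_k|^2\,d\mu,
$$
and then invoke the defining identity $w(\y)\sum_{k=1}^n|\basisfuncexp_k(\y)|^2 = n$, which follows at once from $w = n\kappa$ together with $\kappa^{-1}=\sum_{k=1}^n|\basisfuncexp_k|^2$ in \eqref{eq:weight_function}. This collapses the integrand to $n\,h_n^2$ and yields $\frac{n}{\M}\|h_n\|^2 = \frac{n}{\M}(e_2(\fexp))^2$, the claimed equality. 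There is no genuine obstacle: the only point requiring care is the vanishing of the cross terms, which is exactly the orthogonality $\langle\basisfuncexp_k,\fexp-\Pi_n\fexp\rangle=0$. I also note that, in contrast to Lemma~\ref{thm:bound_exp_abs_value}, which isolates only the $k=1$ term and must resort to the bound $w\leq n$, here summing over the full basis makes $w\sum_{k=1}^n|\basisfuncexp_k|^2$ collapse to the exact constant $n$, turning the inequality into an equality.
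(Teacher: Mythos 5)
Your proof is correct and follows essentially the same route as the paper's: expand the squared $\ell_2$ norm componentwise, kill the off-diagonal terms via independence together with the orthogonality $\langle \basisfuncexp_k, \fexp - \Pi_n\fexp\rangle = 0$, and collapse the diagonal terms using the exact identity $w\sum_{k=1}^n|\basisfuncexp_k|^2 = n$. Your closing observation contrasting this equality with the inequality $w \leq n$ needed in Lemma~\ref{thm:bound_exp_abs_value} is also accurate.
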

\begin{proof}
Using the independence of the random samples 
\begin{align*}
\E \left( 
\left\| \dfrac{1}{\M} \design^\top W^{1/2} g \right\|_{\ell_2}^2 
\right) & = 
\dfrac{1}{\M^2} \E \left( \sum_{k=1}^n \left( 
\sum_{\indmeas=1}^\M  
  \design_{\indmeas k} \sqrt{w(\y_\indmeas)} \err(\y_\indmeas) 
\right)^2 \right)
\\
& = 
\dfrac{1}{\M^2}  \sum_{k=1}^n 
\sum_{\indmeas,j=1}^\M  
\E \left(  
  \basisfuncexp_k(\y_\indmeas) w(\y_\indmeas) \err(\y_\indmeas)
\basisfuncexp_k(\y_j) w(\y_j) \err(\y_j) 
\right)
\\
& = 
\dfrac{1}{\M^2}  \sum_{k=1}^n
\left(
\sum_{\indmeas=1}^\M  
\E \left(  
  (\basisfuncexp_k(\y_\indmeas) w(\y_\indmeas) \err(\y_\indmeas))^2
\right)+
\sum_{\indmeas,j=1 \atop \indmeas \neq j}^\M  
\E \left(  
  \basisfuncexp_k(\y_\indmeas) w(\y_\indmeas) \err(\y_\indmeas)
\basisfuncexp_k(\y_j) w(\y_j) \err(\y_j) 
\right)
\right)
\\
& = 
\dfrac{1}{\M^2}   
\sum_{\indmeas=1}^\M  
\E \left(  
(w(\y_\indmeas) \err(\y_\indmeas))^2 \sum_{k=1}^n (\basisfuncexp_k(\y_\indmeas) )^2
\right)
+
\dfrac{\M(\M-1)}{\M^2}   
\sum_{k=1}^n
\left(
\E \left(  
  \basisfuncexp_k w \err 
\right)
\right)^2
\\
& = 
\dfrac{n}{\M} \int_\Gamma  \err^2 \, d\mu + \dfrac{\M(\M-1)}{\M^2}   
\sum_{k=1}^n \left( 
\int_\Gamma \basisfuncexp_k \err \, d\mu 
\right)^2
\\
&= 
\dfrac{n}{\M} (e_2(\fexp))^2, 
\end{align*}
where 
at the last but one step 
we have used the definition of $w$, 
and at the last step we have used the orthogonality of $\err$ to $\basisfuncexp_k$ for all $k=1,\ldots,n$.
\end{proof}

\begin{proof}[Proof of Theorem~\ref{thm:estimates_quadrature_asy}]
Denote with $\mathbb{E}_{\widetilde{y}_1,\ldots,\widetilde{y}_{m}}$
the expectation over the random samples $\widetilde{y}_1,\ldots, \widetilde{y}_{m}$.
For given $y_1,\ldots,y_m$, 
using the mutual independence between $\widetilde{y}_1,\ldots,\widetilde{y}_\M$ and $\y_1,\ldots,\y_\M$  it holds that 
\begin{equation}
\mathbb{E}_{\widetilde{y}_1,\ldots,\widetilde{y}_{m}} \left(  \left| I(\fexp - \widetilde{\fexp}) - \dfrac{1}{m} \sum_{i=1}^{m} (\fexp - \widetilde{\fexp}) (\widetilde{y}_i) \right|^2   \right) = \dfrac{1}{m} \textrm{Var}_{\widetilde{y} \sim \mu}(\fexp - \widetilde{\fexp}),  
\label{eq:bound_exp_var}
\end{equation}
where the conditional variance on the right-hand side is defined as 
$$
\textrm{Var}_{\widetilde{y}\sim \mu}(\fexp(\widetilde{y}) - \widetilde{\fexp}(\widetilde{y})) := \mathbb{E}_{\widetilde{y} \sim \mu} 
\left( 
\left| \fexp(\widetilde{y})-\widetilde{\fexp}(\widetilde{y})  - 
\mathbb{E}_{
\widetilde{y} \sim \mu
}(\fexp(\widetilde{y}) - \widetilde{\fexp}(\widetilde{y}) )   
\right|^2 \right),
$$
using the following conditional expectation for the given $y_1,\ldots,y_m$: 
$$
\mathbb{E}_{\widetilde{y}\sim \mu}(\fexp(\widetilde{y}) - \widetilde{\fexp}(\widetilde{y})  ) := \int_\Gamma ( \fexp(\widetilde{y})-\widetilde{\fexp}(\widetilde{y}) ) \, d\mu(\widetilde{y}). 
$$
For any given $y_1,\ldots,y_m$, an upper bound for the variance is  
\begin{equation}
\textrm{Var}_{\widetilde{y} \sim \mu}(\fexp(\widetilde{y}) - \widetilde{\fexp}(\widetilde{y})) \leq 
 \int_\Gamma (\fexp(\widetilde{y}) - \widetilde{\fexp}(\widetilde{y}))^2 \, d\mu(\widetilde{y})
=
\| \phi - \widetilde{\phi}\|^2.  
\label{eq:upboundvar}
\end{equation}
Using the law of total expectation, 
\eqref{eq:bound_exp_var},
the upper bound 
\eqref{eq:upboundvar} 
and \eqref{eq:expect_estimate} 
we have that 
\begin{align*}
\mathbb{E}_{\widetilde{y}_1,\ldots,\widetilde{y}_{m} 
\atop 
y_1,\ldots,y_m 
} 
\left(  \left| I(\fexp)  - \widehat{I}_{2\M}(\fexp)
\right|^2   \right) 
= &
\mathbb{E}_{\widetilde{y}_1,\ldots,\widetilde{y}_{m} 
\atop 
y_1,\ldots,y_m 
} 
\left(  \left| I(\fexp - \widetilde{\fexp}) - \dfrac{1}{m} \sum_{i=1}^{m} (\fexp - \widetilde{\fexp}) (\widetilde{y}_i) \right|^2   \right) 
\\
= &
\mathbb{E}_{y_1,\ldots,y_m } \left( 
\mathbb{E}_{ \widetilde{y}_1,\ldots,\widetilde{y}_{m}    }
\left(  \left| I(\fexp - \widetilde{\fexp}) - \dfrac{1}{m} \sum_{i=1}^{m} (\fexp - \widetilde{\fexp}) (\widetilde{y}_i) \right|^2   \right) 
\right) 
\\
\leq & 
\dfrac{1}{m} \mathbb{E}_{y_1,\ldots,y_m } \left( \| \fexp - \widetilde{\fexp} \|^2
\right)
\\
\leq & 
\dfrac{1}{m}
\left(
\left( 1+\varepsilon(m) \right) \min_{v\in V_n} \| \fexp - v \|^2 + 2 \| \fexp \|^2 m^{-r}
\right). 
\end{align*}
\end{proof}

\begin{proof}[Proof of Theorem~\ref{thm:positive_weights}]
Proof of i). 
For any $\indmeas=1,\ldots,\M$, using the sub-multiplicative property of the operator norm we obtain that 
\begin{align}
\left\vert 
\quadweights_\indmeas
- \dfrac{w(\y_\indmeas)}{\M} \right\vert  
= 
\left\vert \dfrac{ \sqrt{ w(\y_\indmeas)} }{\M} \design_\indmeas \left( \gramian^{-1} - \cI \right) e_1 \right\vert  
\leq 
\dfrac{\sqrt{w(\y_\indmeas)}}{\M} 
\Vert \design_\indmeas \Vert_{\ell_2}
\ 
\vvvert \gramian^{-1} - \cI \vvvert
\ 
\Vert e_1 \Vert_{\ell_2}
.
\label{eq:each_w_i}
\end{align}
Now we estimate each term in the right-hand side of \eqref{eq:each_w_i}. 
For the second term, the definitions of $\design$  in \eqref{eq:def_D} and $w$ in \eqref{eq:weight_function} ensure that for any $\indmeas=1,\ldots,\M$ it holds 
\[
\Vert \design_\indmeas \Vert_{\ell_2}^2 = w(y_i) \sum_{j=1}^{n} \basisfuncexp_j(\y_\indmeas)^2 
=n. 
\]
For the third term, using \eqref{eq:statement_inverse} we have that, under condition \eqref{eq:condition_points},  
\[
\vvvert \gramian^{-1} - \cI \vvvert
\leq \dfrac{\delta}{1-\delta},
\]
with probability at least $1-2\M^{-\param}$. 
For the fourth term $\Vert e_1 \Vert_{\ell_2}=1$. 
We now observe that the restriction of $\delta$ to the interval   
\begin{equation}
\label{eq:condition_delta_positivity}
0 < \delta 
\leq 
\dfrac{ \sqrt{ w_\textrm{min} } }{2 \sqrt{n 
}  + \sqrt{ w_\textrm{min} } }, 
\quad n \geq 1,
\end{equation}
and strict monotonicity of $\delta \mapsto \delta (1-\delta)^{-1}$ on 
such an interval 
ensure that the left-hand side in 
\eqref{eq:each_w_i}
 satisfies the following upper bound,  uniformly for all $\indmeas=1,\ldots,\M$: 
\begin{equation}
\left|  
\quadweights_\indmeas
- \dfrac{w(\y_\indmeas)}{\M}
\right|
\leq 
\dfrac{\sqrt{w_{\textrm{min}}}}{\M}
\dfrac{\delta \sqrt{n} }{1-\delta} \leq \dfrac{ w_\textrm{min} }{2\M}.  
\label{eq:bound_unif_int_pesi}
\end{equation}

Since $w_{\textrm{min}}\leq 1$,
choosing 
\begin{equation}
\delta=\dfrac{ 
\sqrt{
w_\textrm{min}
} 
}{
3\sqrt{n 
} 
}
\leq 
\dfrac{ \sqrt{ w_\textrm{min} } }{2 \sqrt{n 
}  + \sqrt{ w_\textrm{min} } }, \quad n \geq 1,
\label{eq:choice_delta_pos_w} 
\end{equation}
and thanks to \eqref{eq:lower_bound_zeta_delta}, we can enforce condition \eqref{eq:condition_points} as
\begin{equation}
\label{eq:enforcing_cond}
\dfrac{\M}{\ln \M} 
\geq
\dfrac{3(1 + \param) n^2 
}{
(4\ln(4/3)-1)
w_{\textrm{min}}
}
= 
\dfrac{ (1 + \param) n }{ 
3(4\ln(4/3)-1)
\delta^2 } 
\geq
\dfrac{ (1 + \param ) n }{ \cdelta(\delta) }, 
\end{equation}
and obtain condition \eqref{eq:condition_points_pw}.
Condition \eqref{eq:condition_points_pw} ensures that \eqref{eq:bound_unif_int_pesi} holds with probability at least $1-2\M^{-\param}$ and simultaneously for all $\indmeas=1,\ldots,\M$, that is the claim \eqref{eq:thesis_positive_weights_th}.

Proof of ii). 
From \eqref{eq:enforcing_cond}, condition \eqref{eq:condition_points_pw} requires more points than \eqref{eq:condition_points}. Hence any cubature formula whose nodes are drawn from \eqref{eq:prob_meas_aux} and satisfy \eqref{eq:condition_points_pw}, 
yields an integration error which obeys to the convergence estimates in Theorem~\ref{thm:estimates_quadrature} but with $\delta$ chosen as in \eqref{eq:choice_delta_pos_w}.
Using the upper bounds \eqref{eq:upper_bound_w} and \eqref{eq:upper_bound_zeta_delta} one obtains \eqref{eq:upper_bound_eps_theo_wmin} in \eqref{eq:expectation_estimate}. 
Since $\delta \leq \frac13$ for any $n\geq 1$, \eqref{eq:expectation_estimate_L1_alt} holds true with an additional factor $3/2$ that multiplies $(1-\delta)^{-1}$. 
\end{proof}

\begin{remark}
\label{sec:rem_weights_sec}
With a similar proof as for \eqref{eq:thesis_positive_weights_th} but using $\| G^{-1} \|\leq (1-\delta)^{-1}$, 
under condition 
\eqref{eq:condition_points} 
for any $\delta \in (0,1)$ 
it holds that  
\begin{equation}
\Pr\left( 
\bigcap_{i=1}^\M
\left\{
|\widetilde{\alpha}_i | \leq 
  \dfrac{(n-1)\sqrt{n}}{   (1-\delta) } 
\dfrac{ 
\sqrt{
w_{\textrm{min}}
}
}{m}
\min\left( \sqrt{1+\delta},\dfrac{3\delta}{2}  \right)
\right\}
\right) 
> 1 -2 \M^{-r}, 
\label{eq:result_prob_weights_add}
\end{equation}
because from the polarization identity and $\psi_1\equiv 1$ we have for any $j>1$ that   
\begin{align*}
 \langle \psi_j, \psi_1 \rangle_\M & = \dfrac{ \| \psi_j + \psi_1 \|_\M^2 }{2} - \dfrac{ \| \psi_j  \|^2_\M }{2} - \dfrac{1}{2} \\
& \leq  
\begin{cases}
\| \psi_j\|_\M \leq \sqrt{1+\delta}, \\
\dfrac{1+\delta}{2} \| \psi_j + 1 \|^2 - \dfrac{ \|  \psi_j \|^2_\M}{2} -\dfrac{1}{2} 
= 
\dfrac{1+\delta}{2} (\| \psi_j  \|^2 +1 )  - \dfrac{ \|  \psi_j \|^2_\M}{2} -\dfrac{1}{2} \leq \dfrac{3\delta}{2},
\end{cases}
\end{align*}
and therefore 
$$
\Pr\left( 
\bigcap_{j=1}^n
\left\{
| \langle \psi_j, \psi_1 \rangle_\M |  \leq \min\left( \sqrt{1+\delta},\dfrac{3\delta}{2}  \right)  
\right\}
\right) 
> 1 -2 \M^{-r}.
$$
\indent
In \eqref{eq:result_prob_weights_add}, 
choosing $\delta$ 
depending on $n$ 
and proceeding similarly as in the proof of 
\eqref{eq:thesis_positive_weights_th}, 
one can 
obtain conditions on $\M$ ensuring that with large probability  
$\sum_{i=1}^\M |\widetilde{\alpha}_i | \leq C  \log \M $ 
 for some constant $C$.  
\end{remark}

\begin{lemma}
\begin{equation}
\cdelta(\delta) 
\leq \dfrac{\delta^2}{2}, \qquad \delta \in [0,1].
\label{eq:upper_bound_zeta_delta}
\end{equation}
\end{lemma}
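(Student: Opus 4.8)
The plan is to recast the claimed bound as the nonnegativity of an auxiliary function and then exploit monotonicity. Recalling the definition $\cdelta(\delta) = (1+\delta)\ln(1+\delta) - \delta$ from \eqref{eq:def_cost_delta}, I would set
$$
g(\delta) := \dfrac{\delta^2}{2} - \cdelta(\delta) = \dfrac{\delta^2}{2} + \delta - (1+\delta)\ln(1+\delta),
$$
so that the inequality \eqref{eq:upper_bound_zeta_delta} is equivalent to $g(\delta) \geq 0$ on $[0,1]$.

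First I would record the boundary value $g(0) = 0$. Then I would differentiate, obtaining
$$
g'(\delta) = \delta + 1 - \bigl(\ln(1+\delta) + 1\bigr) = \delta - \ln(1+\delta),
$$
where the product-rule term $(1+\delta)\cdot(1+\delta)^{-1} = 1$ cancels cleanly against the $+1$. The key step is then to invoke the elementary inequality $\ln(1+\delta) \leq \delta$, valid for all $\delta > -1$, which gives $g'(\delta) \geq 0$ for every $\delta \geq 0$. Hence $g$ is nondecreasing on $[0,1]$, and combined with $g(0) = 0$ this yields $g(\delta) \geq 0$ throughout the interval, which is precisely the claim. In fact the same argument delivers the inequality for all $\delta \geq 0$, not merely on $[0,1]$.

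There is essentially no obstacle here: the only ingredient beyond a single differentiation is the standard concavity bound $\ln(1+\delta) \leq \delta$, which itself follows from the identical one-line monotonicity argument applied to $\delta \mapsto \delta - \ln(1+\delta)$. An alternative route would be to expand $(1+\delta)\ln(1+\delta) = \delta + \tfrac{\delta^2}{2} - \tfrac{\delta^3}{6} + \cdots$ as a power series and bound the tail, but controlling the alternating remainder is more delicate than the monotonicity argument, so I would prefer the derivative approach as both cleaner and self-contained.
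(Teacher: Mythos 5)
Your proof is correct and follows essentially the same route as the paper: both define the difference $\delta^2/2 - \cdelta(\delta)$, show it vanishes at $\delta=0$, and conclude nonnegativity from the derivative $\delta - \ln(1+\delta) \geq 0$. The only cosmetic difference is that you explicitly justify this sign via $\ln(1+\delta)\leq\delta$, which the paper treats as immediate.
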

\begin{proof}
Define $U(\delta):= \delta^2/2$ and $D(\delta):=U(\delta)-\cdelta(\delta)$. The function $D(\delta)$ is continuously differentiable at any $\delta \geq 0$, and strictly increasing since $dD(\delta)/d\delta = \delta - \ln(1+\delta)>0$ for any $\delta > 0$. 
Since $D(0)=0$, we have $D(\delta)\geq 0$ for any $\delta \in [0,1]$, and hence \eqref{eq:upper_bound_zeta_delta}.     
\end{proof}

\begin{lemma}
\begin{equation}
3(4\ln(4/3)-1)
\,
\delta^2
\leq 
\cdelta(\delta),  
\quad \delta \in \left[0,
\frac13
\right]. 
\label{eq:lower_bound_zeta_delta}
\end{equation}
\end{lemma}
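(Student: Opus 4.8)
The plan is to exploit the fact that the constant on the left of \eqref{eq:lower_bound_zeta_delta} is tuned so that the inequality holds with \emph{equality} at the right endpoint. A direct computation gives $9\,\cdelta(1/3)=12\ln(4/3)-3=3(4\ln(4/3)-1)$, so writing $c:=3(4\ln(4/3)-1)$ one has $\cdelta(1/3)=c\,(1/3)^{2}$, and of course $\cdelta(0)=c\cdot 0=0$. Thus the inequality is an equality at both $\delta=0$ and $\delta=1/3$, which rules out the kind of one-sided monotonicity argument on $D(\delta):=\cdelta(\delta)-c\delta^{2}$ that was used for \eqref{eq:upper_bound_zeta_delta}. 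Instead I would prove that the quotient $g(\delta):=\cdelta(\delta)/\delta^{2}$ is non-increasing on $(0,1/3]$, so that $g(\delta)\ge g(1/3)=c$ for every $\delta\in(0,1/3]$, which is exactly the claim (the case $\delta=0$ being trivial).

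First I would reduce the monotonicity of $g$ to a sign condition. Since $\cdelta'(\delta)=\ln(1+\delta)$, differentiating gives
\[
g'(\delta)=\frac{\delta\,\cdelta'(\delta)-2\,\cdelta(\delta)}{\delta^{3}},
\]
and a short simplification of the numerator (using $\cdelta(\delta)=(1+\delta)\ln(1+\delta)-\delta$) yields $\delta\,\cdelta'(\delta)-2\,\cdelta(\delta)=h(\delta)$ with
\[
h(\delta):=2\delta-(2+\delta)\ln(1+\delta).
\]
For $\delta>0$ the denominator $\delta^{3}$ is positive, so it suffices to show $h(\delta)\le 0$ on $[0,1/3]$ (indeed on all of $[0,\infty)$).

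To control $h$ I would use a second-order argument from the origin. One has $h(0)=0$, $h'(\delta)=1-\ln(1+\delta)-(1+\delta)^{-1}$ with $h'(0)=0$, and
\[
h''(\delta)=-\frac{1}{1+\delta}+\frac{1}{(1+\delta)^{2}}=-\frac{\delta}{(1+\delta)^{2}}\le 0,\qquad \delta\ge 0.
\]
Hence $h'$ is non-increasing and starts at $h'(0)=0$, so $h'\le 0$; consequently $h$ is non-increasing and starts at $h(0)=0$, so $h\le 0$ on $[0,\infty)$. This gives $g'\le 0$ on $(0,1/3]$ and completes the proof.

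The only genuine subtlety is structural rather than computational: because both sides of \eqref{eq:lower_bound_zeta_delta} vanish together at $\delta=0$ (to second order) and again at $\delta=1/3$, the naive approach of showing $\cdelta(\delta)-c\delta^{2}\ge 0$ by a single monotonicity statement cannot work, and passing to the quotient $\cdelta(\delta)/\delta^{2}$---which cancels the double zero at the origin and turns the endpoint equality into the value of a monotone function---is what makes the argument clean. Everything after that is routine differentiation and sign bookkeeping.
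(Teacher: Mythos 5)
Your proof is correct, but it takes a genuinely different route from the paper's. The paper works directly with the difference $D(\delta):=\cdelta(\delta)-\omega\delta^{2}$, $\omega:=3(4\ln(4/3)-1)$, and handles the double equality (at $\delta=0$ to second order, and again at $\delta=1/3$) not by monotonicity but by a convex--concave splitting: $D''(\delta)=(1+\delta)^{-1}-2\omega$ changes sign exactly once, so $D$ is convex on $[0,(2\omega)^{-1}-1]$ and concave on $[(2\omega)^{-1}-1,1/3]$; convexity together with $D(0)=D'(0)=0$ forces $D\geq 0$ on the first piece, and concavity together with nonnegativity at the left endpoint of the second piece and $D(1/3)=0$ at the right forces $D\geq 0$ on the second piece. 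So your structural remark that the difference approach ``cannot work'' should be read more narrowly: it cannot work via a \emph{single} monotonicity statement, but it does work with one more derivative, and that is exactly what the paper does. Your quotient argument buys something the paper's does not: since you show $h\leq 0$ on all of $[0,\infty)$, the map $\delta\mapsto\cdelta(\delta)/\delta^{2}$ is non-increasing on the whole half-line, so for \emph{every} $b>0$ you get the sharp bound $\cdelta(\delta)\geq \bigl(\cdelta(b)/b^{2}\bigr)\delta^{2}$ on $[0,b]$, of which \eqref{eq:lower_bound_zeta_delta} is the case $b=1/3$; the paper's verification, by contrast, is tied to the specific endpoint and would have to be redone (sign pattern of $D''$, endpoint value) for any other interval. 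The cost is essentially nil: both arguments amount to two differentiations and sign bookkeeping, and all your computations ($9\,\cdelta(1/3)=3(4\ln(4/3)-1)$, the numerator $h(\delta)=2\delta-(2+\delta)\ln(1+\delta)$, and $h''(\delta)=-\delta/(1+\delta)^{2}\leq 0$) check out.
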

\begin{proof}
Define $\omega:= 3(4\ln(4/3)-1) \approx 0.452$, 
$L(\delta):= \omega \delta^2$ and $D(\delta):=\cdelta(\delta)-L(\delta)$. 
The function $D(\delta)$ is twice continuously differentiable at any $\delta \geq 0$,   
 $dD(\delta)/d\delta = \ln(1+\delta) - 2 \omega\delta$, and $d^2D(\delta)/d\delta^2 = (1+\delta)^{-1} - 2 \omega$. 
Hence the function $D(\delta)$ is convex on $[0,(2\omega)^{-1}-1]$, 
concave on $[(2\omega)^{-1}-1,1/3]$, and 
since $D(0)=dD(0)/d\delta=D(1/3)=0$ it is also nonnegative over $[0,\frac13]$, that gives \eqref{eq:lower_bound_zeta_delta}.
\end{proof}

\section{Conclusions}
\label{sec:six}
\noindent
In any  domain $\Gamma \subseteq \mathbb{R}^\s$ with any dimension $\s\in\mathbb{N}$, we have constructed randomized cubature formulae that are stable and exact on a given space $V_n$ on $\Gamma$ with dimension $n$, under the assumption that an $L^2$ orthonormal basis of  $V_n$ is available in explicit form. 
In Theorem~\ref{thm:estimates_quadrature} 
we have proven that the integration error of these cubature formulae satisfies convergence estimates in probability \eqref{eq:probabilistic_estimate} and in expectation \eqref{eq:expectation_estimate}--\eqref{eq:expectation_estimate_L1_alt}, under condition \eqref{eq:condition_points} on the required number of nodes. 
Such a condition imposes a number of nodes only linearly proportional to $n$, up to a logarithmic term, thus approaching the number of nodes in Tchakaloff's theorem (see Theorem~\ref{thm:tchakaloff}), in the same general setting of arbitrary domain $\Gamma$ and arbitrary dimension $\s$.  
If the number of nodes satisfies the more demanding condition \eqref{eq:condition_points_pw}, where $\M$ is at least quadratically proportional to $n$ up to a logarithmic term, then the proposed randomized cubature formulae have strictly positive weights with high probability.  
Both conditions \eqref{eq:condition_points} and \eqref{eq:condition_points_pw} are immune to the curse of dimensionality: the required number of nodes only depends on $n$, and does not depend on the dimension $\s$.   
The rate of convergence with respect to $\M$ for the error in \eqref{eq:expectation_estimate_L1_alt} catches up with the convergence rate $\M^{-1/2}$ of Monte Carlo, but the multiplicative constant in \eqref{eq:expectation_estimate_L1_alt} can be much smaller thanks to the additional decay of the best approximation error in $V_n$ of $\fexp$. As a consequence, the proposed randomized cubatures provably outperform Monte Carlo whenever the best approximation error in $V_n$ of $\fexp$ decays faster than $n^{-1/2}$.

As a further contribution we have constructed also a cubature formula that is asymptotically optimal, but with error bounds that can be larger in the preasymptotic regime, 
see Theorem~\ref{thm:estimates_quadrature_asy} and Remark~\ref{conv_rates_remark}.

A point that has not been addressed in the present article is the choice of the space $V_n$. Such a choice depends indeed on the function $\fexp$, or on its smoothness class. 
In many applications, for example in the analysis of partial differential equations with parametric or stochastic data, a priori analyses provide good approximation spaces $V_n$ with proven convergence rates $n^{-s}$ with $s>0$. Whenever this is not the case, one can resort to an adaptive construction of the approximation space, see Remark~\ref{thm:remark_adaptive}.

The results on randomized cubatures in this article have been presented using always $\M$ identically distributed random samples from $\sigma$    
for the construction of the weighted least-squares estimator of the integrand function, for both cubatures in Theorem~\ref{thm:estimates_quadrature} and Theorem~\ref{thm:estimates_quadrature_asy}. 
The cubature in Theorem~\ref{thm:estimates_quadrature_asy} uses in addition $\M$ random samples from $\mu$, but not for the construction
of the weighted least-squares estimator.  
The whole analysis in this paper applies tout court to other types of (nonidentically distributed) random samples 
from other distributions than $\sigma$, \emph{e.g.}~the distribution used in \cite[Theorem 2]{M2018},  
and extends to the adaptive setting by exploiting recent advances on adaptive weighted least-squares estimators for approximating the integrand function.

\bibliographystyle{plain}

\end{document}